\def\N{{{\Bbb N}}}
\def\Z{{{\Bbb Z}}}
\def\T{{{\Bbb T}}}
\def\R{{\Bbb R}}
\def\l{{\lambda }}
\def\a{{\alpha }}
\def\D{{\Delta }}
\def\a{{\alpha}}
\def\d{{\delta}}
\def\s{{\sigma}}
\def\vp{{\varphi}}
\def\g{{\gamma }}
\def\w{{\omega }}
\def\L{{\mathcal{L} }}
\def\){\right)}
\def\({\left(}
\def\supp{\operatorname{supp}}
\numberwithin{equation}{section}
\newtheorem{corollary}{Corollary}[section]
\newtheorem{lemma}{Lemma}[section]
\newtheorem{theorem}{Theorem}[section]
\newtheorem{proposition}{Proposition}[section]
\newtheorem{remark}{Remark}[section]
\newtheorem*{assuma}{Assumption A}
\theoremstyle{definition} \newtheorem{example}{Example}[section]
\begin{document}

\title[]{Approximation by linear sampling operators in Banach spaces}

\author[Yurii
Kolomoitsev]{Yurii
Kolomoitsev$^{\text{a, 1}}$}
\address{Institute for Numerical and Applied Mathematics, G\"ottingen University, Lotzestr. 16-18, 37083 G\"ottingen, Germany}
\email{kolomoitsev@math.uni-goettingen.de}

\thanks{$^\text{a}$Institute for Numerical and Applied Mathematics, G\"ottingen University, Lotzestr. 16-18, 37083 G\"ottingen, Germany}

\thanks{$^1$Supported by the German Research Foundation, project KO 5804/3-1}


\thanks{E-mail address: kolomoitsev@math.uni-goettingen.de}

\date{\today}
\subjclass[2010]{41A05, 41A10, 41A25, 41A27, 42A15, 46B42, 46E30} \keywords{Banach lattices, Sampling operators, Interpolation, Moduli of smoothness, Steklov averages, Direct and inverse theorems, Strong converse inequalities, $K$-functionals, Best approximations}

\begin{abstract}
This paper studies approximation properties of linear sampling operators in general Banach lattices $X$. We obtain matching direct and inverse approximation estimates, convergence criteria, equivalence results involving special $K$-functionals and their realizations by sampling operators, as well as strong converse inequalities, which, to the best of our knowledge, have not been previously established for sampling operators even in the classical spaces $L_p$. The results extend several classical theorems previously known mainly in $L_p$ and apply to all functions $f\in X$ for which the corresponding sampling operator is well defined, thereby substantially enlarging the class of functions that can be considered in this framework.
\end{abstract}

\maketitle

\section{Introduction}

Classical results in approximation theory and Fourier analysis include direct
and inverse theorems for approximation by trigonometric polynomials and entire
functions of exponential type. Such results were first obtained in the space $C$
of uniformly continuous functions and in the spaces $L_p$, $1\le p<\infty$, using
polynomials (or entire functions) of best approximation  and convolution operators
(see, e.g.,~\cite{DL, Z, timan, TB}). Since then, they have been extended to a
wide range of more general function spaces and approximation methods.

The most studied and technically simplest case is the approximation by convolution operators in translation-invariant Banach spaces. In this setting, Young-type inequalities hold, and in many situations the approximation problem essentially reduces to the investigation of the behavior of the convolution kernels in the space $L_1$ (see, e.g.,~\cite[Ch.~9]{S71}, \cite[Ch.~7]{DL}, and~\cite{D80}).

In more general Banach spaces, which are typically not translation-invariant (for instance, weighted Lebesgue, Lorentz, Orlicz, and variable exponent spaces), the problem of obtaining direct and inverse approximation estimates becomes more delicate. In such spaces, the analysis often relies on tools from interpolation theory, the boundedness of the Hardy--Littlewood maximal operator, or the uniform boundedness of Ces\`aro means. Moreover, the study of approximation processes usually requires the introduction of special measures of smoothness, such as $K$-functionals, and their realizations or special moduli of smoothness; see, e.g.,~\cite{D98, IG06, KY10, V23, Vo23}.

A notable recent contribution in this direction is~\cite{V23}, where direct and inverse approximation theorems for convolution operators were obtained in Banach lattices $X$ under the uniform boundedness of the Steklov averaging operators. This provides, to the best of our knowledge, one of the most general settings currently known for such approximation results, with smoothness characterized by moduli generated by Steklov averages.

In recent years, considerable attention has also been paid to the approximation properties of Kantorovich-type sampling operators. Approximation results for such operators have been established in various Banach spaces, including weighted $L_p$ spaces~\cite{KS21}, Wiener classes~\cite{KKS20}, Orlicz spaces~\cite{CV14, CPV23}, and weighted spaces with variable exponent~\cite{D25}.

The case of sampling operators is substantially more involved.
In contrast to convolution and Kantorovich-type operators, considerably fewer approximation results are available in general Banach spaces.
Error estimates for classical Lagrange interpolation polynomials in weighted spaces $L_{p,w}(\T)$ with Muckenhoupt weights were obtained in~\cite[Theorem~3.2.15]{MM08}.
In non-periodic weighted $L_p$ spaces, approximation properties of various sampling and interpolation operators were studied in~\cite{NU19} and~\cite{KS21}.
The case of mixed-norm spaces $L_{\bar{p}}$  was considered in~\cite{CZ25}.
In~\cite{FG93}, the authors studied the approximation of bounded continuous functions by sampling operators in Banach lattices  $X$ satisfying a Young-type convolution inequality.
For results on the reconstruction of band-limited functions from irregular sampling in such Banach lattices, see~\cite{FG92}; see also~\cite{IS13} for the case of more general Banach lattices.
In the setting of translation-invariant Banach lattices $X(\T)$, approximation estimates for Lagrange interpolation polynomials in terms of moduli of smoothness and the error of best one-sided approximation were established in~\cite{KT26}.

The aim of the present paper is to study the approximation properties of linear sampling operators in general Banach lattices $X$.
Before turning to this general setting, we briefly review the relevant approximation properties of sampling operators in the classical spaces $L_p(\T)$, $1<p<\infty$, using the following trigonometric Lagrange interpolation polynomial on equidistant nodes as an example:
\begin{equation*}
  \mathcal{{L}}_n f(x)
  := \frac{1}{2n+1}\sum_{k=0}^{2n} f(t_k)
    \frac{\sin\bigl((n+\tfrac12)(x-t_k)\bigr)}{\sin\bigl(\tfrac{x-t_k}{2}\bigr)},
  \qquad
  t_k=\frac{2\pi k}{2n+1}.
\end{equation*}

Let $f\in C(\T)$ and $1<p<\infty$. By the classical Marcinkiewicz--Zygmund inequality (see~\cite[7.14, Ch.~X]{Z} and~\cite[Theorem~3.2.5]{MM08}), one has
\begin{equation}\label{eq:MZ}
\|f - \mathcal{L}_n f\|_{L_p}
\le C_p E_n(f)_{L_\infty}
\le C_{r,p}\,\omega_r\!\left(f,\tfrac1n\right)_{L_\infty},
\end{equation}
where $E_n(f)_X$ denotes the error of best approximation of $f$ by trigonometric polynomials of degree at most $n$ in $X$, and $\omega_r(f,\delta)_X$ is the integral modulus of smoothness of order $r$ (see~\eqref{eqmod1}).

It is well known that, in general, the right-hand side of~\eqref{eq:MZ} cannot be replaced by $\lambda_n E_n(f)_{L_p}$ or $\lambda_n \omega_r(f,1/n)_{L_p}$, even for rapidly increasing sequences $\lambda_n$ and sufficiently smooth functions $f$; see~\cite{BXZ92,WS03}.
Nevertheless, for every $f\in C(\T)$, there holds (see, e.g.,~\cite{H89, O86}) the following estimate:
\begin{equation}\label{eq:Besov}
\|f - \mathcal{L}_n f\|_{L_p}
\le \frac{C_{r,p}}{n^{1/p}}
\int_0^{1/n}
\frac{\omega_r(f,t)_{L_p}}{t^{1/p+1}}\,dt.
\end{equation}
Moreover, this estimate together with the classical inverse approximation theorem, see, e.g.,~\cite[Ch.~7, \S~3]{DL} yields that for $f\in C(\T)$ and $1/p<\alpha<r$,
\begin{equation}\label{eq:rate}
\|f - \mathcal{L}_n f\|_{L_p}=\mathcal{O}(n^{-\alpha})
\quad\Longleftrightarrow\quad
\w_r(f,\d)_{L_p}=\mathcal{O}(\d^\a).
\end{equation}

The property~\eqref{eq:rate} fails for $\alpha\le 1/p$. Even so, meaningful approximation results for $\mathcal{L}_n f$ remain available for functions of low smoothness.
In particular, it was shown in~\cite{Pr84} that if $f$ is of bounded variation on $\T$, then
$\|f - \mathcal{L}_n f\|_{L_p}= \mathcal{O}(n^{-1/p})$.
More general estimates can be formulated in terms of the error of best one-sided approximation $\widetilde{E}_n(f)_{L_p}$ (see~\eqref{er_best_ones}) and the $\tau$-modulus of smoothness\footnote{Recall that the $\tau$-modulus (averaged modulus of smoothness) is defined by
$$
\tau_r(f,\d)_{L_p}:=\|\w_r(f,\cdot,\d)\|_{L_p},\quad
\w_r(f,x,\d):=\sup\{|\Delta_h^r f(t)|:\ t,t+rh\in[x-r\d/2,x+r\d/2]\}.
$$}.
For every bounded function $f$ on $\T$, one has
\begin{equation}\label{eq:tau}
\|f - \mathcal{L}_n f\|_{L_p}
\le C_p \widetilde{E}_n(f)_{L_p}
\le C_{r,p}\tau_r\!\left(f,\tfrac{1}{n}\right)_{L_p}.
\end{equation}
(see~\cite{H89}, \cite[Ch.~8]{SP}; see also~\cite{SS00} and~\cite{BBSV06} for a similar result on $\R$).

Estimate~\eqref{eq:tau} implies the results mentioned above for $\mathcal{L}_n f$. However, it is not sharp in general. Namely, if $\alpha\le 1/p$, then there exist continuous functions $f$ such that
$
\|f - \mathcal{L}_n f\|_{L_p} = \mathcal{O}(n^{-\alpha})
$
does not imply
$
\widetilde{E}_n(f)_{L_p}=\mathcal{O}(n^{-\alpha})$
or
$\tau_r(f,\delta)_{L_p}=\mathcal{O}(\delta^\alpha),
$
see, e.g.,~\cite{KL23}.

These shortcomings were eliminated in the recent work~\cite{KL23} (see also~\cite{K25} for the non-periodic case) by introducing a new semi-discrete modulus of smoothness defined via the Steklov averaging operator $A_h$ (see~\eqref{eq:steklov}):
\begin{equation*}
\Omega_2\!\left(f,\tfrac{1}{n}\right)_p
:= \(\frac1{2n+1} \sum_{k=0}^{2n}|f(t_k) - A_{\frac\pi{2n+1}} f(t_k)|^p\)^{1/p}+\w_2(f,1/n)_p\,.
\end{equation*}
This modulus makes it possible to establish matching direct and inverse approximation theorems for various sampling operators in $L_p(\T)$ and it is well defined for every function for which the corresponding interpolation operator is meaningful.

It was shown in~\cite{KL23} that for any finite-valued function $f\in L_p(\T)$, $1<p<\infty$, and for any $\alpha\in(0,2)$, the following equivalence and convergence criterion hold:
\begin{equation}\label{eq:Omega-equiv}
\|f - \mathcal{L}_n f\|_{L_p(\T)} = \mathcal{O}(n^{-\alpha})
\quad\iff\quad
\Omega_2(f,\tfrac{1}{n})_p = \mathcal{O}(n^{-\alpha}),
\end{equation}
\begin{equation}\label{eq:Omega-conv}
\lim_{n\to\infty}\|f - \mathcal{L}_n f\|_{L_p(\T)} = 0
\quad\iff\quad
\lim_{n\to\infty}\Omega_2\!\left(f,\tfrac{1}{n}\right)_p = 0.
\end{equation}
The paper~\cite{KL23} also provides examples, where $\Omega_2(f,\tfrac{1}{n})_p$ is explicitly computed for several special functions, demonstrating its advantages over the $\tau$-modulus.

One of the main goals of the present paper is to obtain analogues of the results discussed above for general linear sampling operators $G_n$ acting in Banach lattices $X(\T)$ and $X(\R)$, which are not necessarily translation invariant but have uniformly bounded Steklov averages $A_h$.
In such spaces, a natural analogue of the modulus $\Omega_2$ can be introduced by the formula:
\begin{equation*}
\Omega_2^*\left(f,\tfrac{1}{n}\right)_X
:=
\bigg\|\sum_{k=0}^{2n} |f(t_k) - A_{\frac{\pi}{2n+1}}f(t_k)|\chi_{[t_k,t_{k+1}]}\bigg\|_{X(\T)}
+
\|f - A_{\frac{\pi}{2n+1}} f\|_{X(\T)}.
\end{equation*}
In particular, we prove (see Corollaries~\ref{cor1} and~\ref{cor2+}) that a full analogue of~\eqref{eq:Omega-equiv} and~\eqref{eq:Omega-conv} hold in such spaces $X$:
\begin{equation*}
\|f - \mathcal{L}_n f\|_{X(\T)} = \mathcal{O}(n^{-\alpha})
\quad\iff\quad
\Omega_2^*\!\left(f,\tfrac{1}{n}\right)_X = \mathcal{O}(n^{-\alpha}),
\end{equation*}
\begin{equation*}
\lim_{n\to\infty}\|f - \mathcal{L}_n f\|_{X(\T)} = 0
\quad\iff\quad
\lim_{n\to\infty}\Omega_2^*\!\left(f,\tfrac{1}{n}\right)_X = 0.
\end{equation*}

We stress that our contribution is not limited to extending known estimates to general Banach lattices $X$. We also show that the moduli $\Omega_2$ and $\Omega_2^*$ are effective tools for establishing strong converse inequalities for sampling operators; see Section~\ref{secs} and Theorems~\ref{br} and~\ref{brr}. While such inequalities are well studied for various convolution and Kantorovich-type operators (see~\cite{DI93}), they have not, to our knowledge, been addressed for sampling operators, even in classical spaces $L_p(\T)$, $1\le p<\infty$, except for partial results in~\cite[Theorem~4.4]{KP21}, where the converse-type inequalities involve the term $\widetilde{E}_n(f)_{L_p}$, and except for the special converse inequalities obtained in~\cite{AD24} in the context of the approximation of derivatives in $L_p$-norm. Moreover, we improve several results from~\cite{KL23} and \cite{K25} by relaxing restrictive assumptions on sampling operators related to inverse stability inequalities (see~\eqref{a2} and~\eqref{a2r}) and by providing error estimates in certain discrete norms.

The paper is organized as follows.
Section~2 contains notation and auxiliary results on Banach lattices, Marcinkiewicz--Zygmund type inequalities, moduli of smoothness, and errors of best approximations.
In Section~3 we present the main approximation results for periodic functions, including direct and inverse estimates, strong converse inequalities, and illustrative examples.
Section~4 is devoted to approximation problems on the real line and contains analogous results and examples.

Throughout the paper, $c$, $C$, and $C_j$, $j=1,2,\dots$, denote positive constants which may vary from line to line and are independent of the essential parameters, typically $n$, $\s$, and $f$.

\section{Notation and auxiliary results}

\subsection{Banach lattices}

Let $\Omega$ be either the unit circle $\mathbb{T}$, identified with the interval $[-\pi, \pi)$, or the real line $\mathbb{R}$. We denote by $\mu$ the Lebesgue measure on $\mathbb{R}$ and the normalized Lebesgue measure on $\mathbb{T}$.

Let $X = X(\Omega)$ be a Banach function lattice on $\Omega$ with respect to the measure $\mu$, that is, a Banach space of measurable functions on $\Omega$ with the norm $\|\cdot\|_X$ satisfying the following properties:
\begin{itemize}
  \item[$1)$] if $f$ is measurable and $g \in X$ satisfy $|f| \le |g|$, then $f \in X$ and $\|f\|_X \le \|g\|_X$,
  \item[$2)$] whenever $f_n\in X$, with $\sup_n \|f_n\|_X<\infty$, and $0\le f_n \uparrow f$, then $f\in X$ and $\|f_n\|_X \to \|f\|_X$,
  \item[$3)$] if $E \subset \Omega$ is measurable and~$\mu(E) < \infty$, then $\chi_E\in X$ and there exists
  a constant $c_E>0$ such that  $\int_E |f(x)|d\mu(x)\le c_E\|f\|_X$.
\end{itemize}

For clarity, functions in $X(\Omega)$ are regarded as finite-valued and uniquely defined at every point of $\Omega$, rather than as equivalence classes. This convention allows us to apply sampling operators, which require pointwise values of functions, in a consistent manner.

We denote by $X'$ the associate space of $X$ with the norm
\begin{equation*}
  \|g\|_{X'} := \sup_{\|f\|_X \le 1} \bigg|\int_\Omega g(x) \overline{f(x)} \, d\mu(x)\bigg|.
\end{equation*}
Note that $X'$ is also a Banach function lattice on $\Omega$ with the norm $\|\cdot\|_{X'}$ satisfying the above properties 1)--3), see~\cite[Chapter~1.2]{BS88}.

The space $X$ is called \emph{translation-invariant} if for all $t \in \Omega$ and $f \in X$, the function $f(\cdot + t)$ belongs to $X$ and satisfies
$
\|f(\cdot + t)\|_X = \|f\|_X.
$
The space $X$ is called \emph{rearrangement-invariant} if $\|f\|_X = \|g\|_X$ whenever
$f$
 and
$g$
 have the same distribution function, i.e.,
$$
\mu_f(y) = \mu_g(y), \quad \text{where} \quad \mu_h(y): = \mu\{x \in \Omega : |h(x)| \ge y\}.
$$

We say that $f$ belongs to the Sobolev-type space $X^r$, $r \in \mathbb{N}$, if $f^{(r-1)}$ is (locally) absolutely continuous on $\Omega$ and $f^{(r)} \in X$.  We denote $B=B(\Omega)$  the set of all bounded functions on~$\Omega$.

As usual, $\mathcal{T}_n$ denotes the set of all trigonometric polynomials of degree at most $n$.
The class of band-limited functions  $\mathcal{B}_X^\s=\mathcal{B}_X^\s(\R)$ is given by
$$
\mathcal{B}_X^\s:=\left\{\varphi  \in X\cap \mathcal{S}'(\R)\,:\,\supp\;\widehat{\varphi} \subset [-\s,\s]\right\},
$$
where $\widehat{\varphi}$ denotes the Fourier transform of $\varphi$ in the sense of tempered distributions.
For $f \in L_1(\Omega)$, we define its Fourier transform (or Fourier coefficients if $\Omega = \mathbb{T}$) by
$$
\widehat{f}(\xi)=\mathcal{F}f(\xi)
:=
\frac1{2\pi}\int_\Omega f(x)\,e^{-i \xi x}\,dx,
\quad
\xi \in \mathbb{R}
\ \ (\xi \in \mathbb{Z} \text{ if } \Omega = \mathbb{T}).
$$

Throughout the paper, $\Phi_n$ denotes a linear subspace of $X$ depending on a parameter $n\in \N$ (or $n>0$).
We assume that the family $(\Phi_n)_n$ satisfies the nesting property $\Phi_m\subset \Phi_{n}$ for $n\ge m$, with $\Phi_0=\{0\}$.
Moreover, elements of $\Phi_n$ satisfy the Bernstein-type inequality
\begin{equation}\label{ber}
  \|\vp^{(r)}\|_X \le (Bn)^r\|\vp\|_X,\quad r\in \N,\quad\vp\in \Phi_n,
\end{equation}
for some constant $B=B_{X}>0$.
Under certain natural assumptions on  $X$, all classical examples of such subspaces $\Phi_n$, including trigonometric polynomials, band-limited functions, and shift-invariant spaces generated by a smooth function, satisfy these properties.

In the case of translation-invariant Banach lattices $X$, one may also include spline spaces of a given smoothness $s\in \N$.
In this situation, we assume that~\eqref{ber} holds only for $r\le s$.

For a locally integrable function $f$ and a parameter $h > 0$, we define the Steklov averaging operators $A_h$ and $\dot{A}_h$ by
\begin{equation}\label{eq:steklov}
A_h f(x)
:=
\frac{1}{h}
\int_{x - \frac{h}{2}}^{x + \frac{h}{2}} f(t)\,dt,
\quad
\dot{A}_h f(x)
:=
A_h f\!\left(x + \frac{h}{2}\right).
\end{equation}

Most of the results of the present paper are obtained under the following assumption on the Banach lattice $X$.

\begin{assuma}\label{asumpA}
For all $f \in X$ and  $h > 0$, the Steklov average $A_h f$ belongs to $X$ and
$$
\sup_{h>0,\, \|f\|_X\le 1}\|A_h f\|_X<\infty.
$$
\end{assuma}

We emphasize that this assumption is natural in the context of the approximation error estimates studied in this paper, since it guarantees the proper behavior of smoothness characteristics defined via Steklov averaging operators.
At the same time, it is substantially weaker than the boundedness of the Hardy--Littlewood maximal operator and, in particular, allows one to work with spaces such as $L_1(\Omega)$.

We note that if $X$ satisfies Assumption~A, then the associate space $X'$ also satisfies Assumption~A. This follows from the definition of the norm $\|\cdot\|_{X'}$ and the Fubini-Tonelli theorem.

A function $F : \mathbb{R} \to [0, +\infty]$ is called \emph{symmetrically decreasing} if it is even and non-increasing on $\mathbb{R}_+$.
Given a function $K : \mathbb{R} \to \mathbb{C} \cup \{\infty\}$, we denote by $K^*$ a symmetrically decreasing majorant of $K$, that is, a symmetrically decreasing function such that
$$
|K(x)| \le K^*(x)
\quad \text{for all } x \in \Omega.
$$
We denote by $\mathcal{R}$ the class of integrable symmetrically decreasing functions on $\mathbb{R}$, and by $\mathcal{R}^*$ the class of all functions admitting a summable symmetrically decreasing majorant.

For measurable functions $f$ and $g$ on $\Omega$, their convolution is defined by
$$
(f * g)(x)
:=
\int_\Omega f(t)\,g(x - t)\,d\mu(t),
\quad x \in \Omega,
$$
whenever the integral is well defined.

The following important lemma is proved in~\cite{V23}.

\begin{lemma}\label{lemS}
Let $X$ be a Banach lattice satisfying Assumption~A. If $f \in X$ and $K \in \mathcal{R}^*$, then the convolution $f*K$ exists and is finite almost everywhere, belongs to $X$, and satisfies
\begin{equation*}
\|f*K\|_X
\le
C\,\|K^*\|_{L_1(\Omega)}\,\|f\|_X,
\end{equation*}
where the constant $C$ depends only on $X$.
\end{lemma}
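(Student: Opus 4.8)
The plan is to control the general convolution by the Steklov operators $A_{h}$ — for which Assumption~A supplies exactly the required uniform bound — by decomposing the symmetrically decreasing majorant into a continuous superposition of normalized characteristic functions of symmetric intervals (a layer-cake decomposition).

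First I would reduce to a model case. Since $|K|\le K^{*}$ pointwise and $X$ is a lattice, one has $|f*K|\le|f|*K^{*}$ at every point where the defining integral converges absolutely. Hence it suffices to show that for $g:=|f|\ge 0$ in $X$ and $G:=K^{*}\ge 0$ symmetrically decreasing with $G\in L_{1}(\Omega)$, the convolution $g*G$ is finite $\mu$-a.e., lies in $X$, and satisfies $\|g*G\|_{X}\le C\|G\|_{L_{1}(\Omega)}\|g\|_{X}$: once this is known, $g*G$ being finite a.e.\ forces the integral defining $(f*K)(x)$ to converge absolutely for a.e.\ $x$, and the lattice property gives $\|f*K\|_{X}\le\|g*G\|_{X}$.

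Next I would use the layer-cake identity $G=\int_{0}^{\infty}\chi_{E_{\lambda}}\,d\lambda$ with $E_{\lambda}=\{x:G(x)>\lambda\}$. Since $G$ is symmetrically decreasing and integrable, for each $\lambda>0$ the set $E_{\lambda}$ is (up to a null set) a bounded interval symmetric about the origin of finite $\mu$-measure — an arc when $\Omega=\T$ — so $\chi_{E_{\lambda}}=\mu(E_{\lambda})\,k_{\lambda}$, where $k_{\lambda}$ is the $L_{1}(\Omega)$-normalized characteristic function of $E_{\lambda}$, and therefore $g*\chi_{E_{\lambda}}=\mu(E_{\lambda})\,A_{h(\lambda)}g$ for the corresponding length parameter $h(\lambda)$, because $A_{h}$ is precisely convolution with the normalized characteristic function of $[-h/2,h/2]$. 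By Tonelli's theorem (all integrands nonnegative), for a.e.\ $x$,
$$
(g*G)(x)=\int_{0}^{\infty}\mu(E_{\lambda})\,A_{h(\lambda)}g(x)\,d\lambda .
$$
Applying the Minkowski-type integral inequality for the norm $\|\cdot\|_{X}$, then Assumption~A in the form $\|A_{h}g\|_{X}\le C_{0}\|g\|_{X}$ with $C_{0}$ depending only on $X$, and finally the layer-cake identity once more in the form $\int_{0}^{\infty}\mu(E_{\lambda})\,d\lambda=\int_{\Omega}G\,d\mu$, I get
$$
\|g*G\|_{X}\le\int_{0}^{\infty}\mu(E_{\lambda})\,\|A_{h(\lambda)}g\|_{X}\,d\lambda\le C_{0}\|g\|_{X}\int_{0}^{\infty}\mu(E_{\lambda})\,d\lambda=C_{0}\,\|G\|_{L_{1}(\Omega)}\,\|g\|_{X}.
$$
In particular $g*G\in X$ and is finite a.e.; undoing the first reduction then completes the proof, the periodic case being handled verbatim after $2\pi$-periodizing $G$.

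The step I expect to require the most care is the Minkowski integral inequality $\big\|\int_{0}^{\infty}\Psi_{\lambda}\,d\lambda\big\|_{X}\le\int_{0}^{\infty}\|\Psi_{\lambda}\|_{X}\,d\lambda$ for the nonnegative integrands $\Psi_{\lambda}=\mu(E_{\lambda})A_{h(\lambda)}g$. The cleanest route is to invoke property~2), which yields the Fatou property and hence the isometric identification $X=X''$ (Lorentz--Luxemburg; cf.~\cite[Chapter~1.2]{BS88}), so that $\|F\|_{X}=\sup_{\|w\|_{X'}\le1}\int_{\Omega}|F|\,|w|\,d\mu$, and then to pull the norm inside the $\lambda$-integral using Tonelli and H\"older's inequality in $X$; alternatively one may approximate the $\lambda$-integral from below by finite Riemann sums, apply the ordinary triangle inequality, and pass to the limit via property~2). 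Everything else — the layer-cake identity, the a.e.\ absolute convergence of the convolutions, and the uniform bound on $\|A_{h}g\|_{X}$ — is routine once Assumption~A and the Banach-function-lattice axioms are available.
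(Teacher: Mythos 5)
Your proposal is correct. The paper itself gives no proof of this lemma (it is quoted from~\cite{V23}), and your argument --- the layer-cake decomposition of $K^*$ into multiples of normalized indicators of symmetric intervals, so that $|f|*K^*$ becomes a continuous mixture of Steklov averages $A_{h(\lambda)}|f|$, combined with the Fatou-property/Lorentz--Luxemburg duality $X=X''$ to justify the Minkowski-type integral inequality --- is precisely the standard route used there, with the periodic case handled by working on symmetric arcs of $\T$.
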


There are many Banach lattices that satisfy Assumption~A.
Most classical translation-invariant Banach function spaces enjoy the Fatou property and the uniform boundedness of Steklov averages.
This class includes Lebesgue, Orlicz, Lorentz, Morrey, and more general rearrangement-invariant spaces; see the monographs
\cite{BS88, KPS82, PKJF2013}.
For non-translation-invariant settings the situation is more delicate.
Although the boundedness of Steklov averages follows from the boundedness of the Hardy--Littlewood maximal operator in $X$, this approach typically requires rather strong structural assumptions.
Such issues have been studied for various non-translation-invariant analogues of classical spaces, including weighted and variable exponent cases (see, e.g., \cite{CF13, HH19, L25}).
On the other hand, Steklov averages can be treated directly, which enlarges the class of admissible Banach lattices $X$.
For instance, it is known that Steklov averages are uniformly bounded in weighted spaces $L_{p,w}$, $1 \le p < \infty$, whenever the weight $w$ belongs to the Muckenhoupt class $\mathrm{A}_p$
(see \cite[Section~5.2.1]{St93}).
Analogous results have recently been obtained for periodic unweighted and weighted variable exponent spaces in \cite{V24, V25}.

\subsection{Marcinkiewicz–Zygmund (MZ)-type inequalities in Banach lattices}

The following discretization results in a Banach lattice
$X$ form a central technical component of the paper.

\begin{lemma}\label{lemmz0}
  Let $X(\T)$ be a Banach lattice, and let
$(x_k)_{k=1}^N\subset \T$ be a set of points such that, for some constant $\g>0$,
$$
0<x_{k+1}-x_k\le\frac{\g}{n},\quad k=1,\dots,N,
$$
where $x_{N+1}=2\pi+x_1$. Then, for all $\vp\in \Phi_n$, we have
\begin{equation}\label{TrMZ++}
  \bigg\|\sum_{k=1}^{N}|\vp(x_k)|\chi_{[x_k,x_{k+1})}\bigg\|_{X}\le C\|\vp\|_X,
\end{equation}
  where the constant $C$ is independent of $\vp$, $N$, and $n$. In particular, \eqref{TrMZ++} holds if $X(\T)$ is a Banach lattice satisfying Assumption~A and $\Phi_n=\mathcal{T}_n$.
\end{lemma}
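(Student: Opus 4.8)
The plan is to dominate the step function $\sum_{k=1}^{N}|\vp(x_k)|\chi_{[x_k,x_{k+1})}$ \emph{pointwise} by $|\vp|$ plus a Steklov average of $|\vp'|$, and then to pass to the $X$-norm by means of the lattice property~1), the Bernstein-type inequality~\eqref{ber} with $r=1$, and the uniform boundedness of the averaging operators $A_h$ on $X$. For the pointwise step, fix $x\in\T$ and let $k$ be the unique index with $x\in[x_k,x_{k+1})$, so that the value of the step function at $x$ equals $|\vp(x_k)|$; since $0\le x-x_k<x_{k+1}-x_k\le\g/n$, we have $[x_k,x_{k+1})\subset[x-\g/n,x+\g/n]$. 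Recall that $X\hookrightarrow L_1(\T)$ by property~3) with $E=\T$, and that $\vp'\in X$ by~\eqref{ber}; as $\vp$ is locally absolutely continuous, the identity $\vp(x_k)=\vp(x)-\int_{x_k}^{x}\vp'(s)\,ds$ gives
$$
\sum_{k=1}^{N}|\vp(x_k)|\chi_{[x_k,x_{k+1})}(x)\ \le\ |\vp(x)|+\int_{x-\g/n}^{x+\g/n}|\vp'(s)|\,ds\ =\ |\vp(x)|+\frac{2\g}{n}\,A_{2\g/n}\bigl(|\vp'|\bigr)(x)
$$
for every $x\in\T$.

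Taking $\|\cdot\|_X$ and using property~1), it remains to bound $\|A_{2\g/n}(|\vp'|)\|_X$. The averaging operators are uniformly bounded on $X$, so $\|A_{2\g/n}(|\vp'|)\|_X\le C\|\vp'\|_X$, and~\eqref{ber} with $r=1$ gives $\|\vp'\|_X\le Bn\|\vp\|_X$; consequently
$$
\bigg\|\sum_{k=1}^{N}|\vp(x_k)|\chi_{[x_k,x_{k+1})}\bigg\|_X\ \le\ \|\vp\|_X+C\,\frac{2\g}{n}\,\|\vp'\|_X\ \le\ (1+2CB\g)\,\|\vp\|_X,
$$
which is~\eqref{TrMZ++} with a constant depending only on $X$, $B$, and $\g$. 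The one non-routine point — and the place where the structure of $X$ is essentially used — is the bound $\|A_{2\g/n}(|\vp'|)\|_X\le C\|\vp'\|_X$: it is immediate from Assumption~A, and equally from Lemma~\ref{lemS} applied to the (periodized, symmetrically decreasing) kernel of $A_{2\g/n}$, whose $L_1(\T)$-norm is bounded by an absolute constant; alternatively, for translation-invariant $X$ one may use Minkowski's integral inequality, since $A_{2\g/n}(|\vp'|)(x)=\tfrac{n}{2\g}\int_{-\g/n}^{\g/n}|\vp'(x+u)|\,du$. Everything else reduces to the fundamental theorem of calculus and Bernstein's inequality. For the sampling-operator results of the later sections I would also record the more robust form of the pointwise estimate, $|\vp(x_k)|\le A_h(|\vp|)(x)+h\,A_h(|\vp'|)(x)$ whenever $[x_k,x_{k+1})\subset[x-\tfrac h2,x+\tfrac h2]$, obtained by additionally averaging the term $\vp(x)$ over $[x-\tfrac h2,x+\tfrac h2]$; this version applies verbatim to arbitrary sampled functions, not merely to $\vp\in\Phi_n$.

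Finally, for the last assertion of the lemma it suffices to verify that, when $X$ satisfies Assumption~A, the family $\mathcal{T}_n$ satisfies the standing hypothesis~\eqref{ber}; the already proved part then applies with $\Phi_n=\mathcal{T}_n$. This is routine: for $\vp\in\mathcal{T}_n$ the derivative $\vp^{(r)}$ equals $(cn)^{r}$ times the convolution of $\vp$ with a suitable $n$-dilate of a fixed $C^\infty$ periodic kernel (of de la Vall\'ee Poussin type) that admits a summable symmetrically decreasing majorant whose $L_1(\T)$-norm does not depend on $n$, so Lemma~\ref{lemS} yields $\|\vp^{(r)}\|_X\le(Bn)^{r}\|\vp\|_X$.
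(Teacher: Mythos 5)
Your argument is a genuinely different route from the paper's, which disposes of the lemma by citing the Marcinkiewicz--Zygmund theorems of \cite{KT26} together with \eqref{ber}; your pointwise bound
$\sum_{k}|\vp(x_k)|\chi_{[x_k,x_{k+1})}(x)\le|\vp(x)|+\tfrac{2\g}{n}A_{2\g/n}(|\vp'|)(x)$
followed by Bernstein is clean and self-contained, and the computations in it are correct. The trouble is that it proves less than the lemma asserts. The first assertion is stated for an \emph{arbitrary} Banach lattice $X(\T)$, the only standing hypothesis being \eqref{ber} for $\Phi_n$; Assumption~A enters only in the final ``in particular'' clause, and there only to secure \eqref{ber} for $\mathcal{T}_n$ (via Lemma~\ref{lemm3}(i)). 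Every route you offer for the key step $\|A_{2\g/n}(|\vp'|)\|_X\le C\|\vp'\|_X$ imports extra structure: Assumption~A directly, Lemma~\ref{lemS} (whose hypothesis is again Assumption~A), or translation invariance plus Minkowski. So your proof covers the ``in particular'' case and all later applications in which Assumption~A is in force, but not the general statement --- and the paper really does use the general statement: Remark~\ref{remG} and the first part of Corollary~\ref{thL1} rely on \eqref{TrMZ++} for a Banach lattice that is not assumed to satisfy Assumption~A, only the Bernstein inequality for $\mathcal{T}_n$ (resp.\ $\sup_n\|S_n\|_{X\to X}<\infty$).

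The gap can be closed inside your own scheme by not stopping the fundamental theorem of calculus at the first derivative. Elements of $\mathcal{T}_n$ and of $\mathcal{B}_X^\s$ are entire, so for $x\in[x_k,x_{k+1})$ one has
$|\vp(x_k)|\le\sum_{j\ge0}\tfrac{(\g/n)^j}{j!}\,|\vp^{(j)}(x)|$,
and then lattice property 1) together with \eqref{ber} applied for every order $r$ gives
$\bigl\|\sum_{k}|\vp(x_k)|\chi_{[x_k,x_{k+1})}\bigr\|_X\le\sum_{j\ge0}\tfrac{(B\g)^j}{j!}\,\|\vp\|_X=e^{B\g}\|\vp\|_X$,
with no hypothesis on $X$ beyond it being a Banach lattice; this matches the generality of the cited results in \cite{KT26}. (For spline-type families, where the paper assumes \eqref{ber} only for $r\le s$, one is anyway in the translation-invariant setting, where your Minkowski variant applies.) Your final verification of \eqref{ber} for $\mathcal{T}_n$ under Assumption~A via a de la Vall\'ee Poussin kernel and Lemma~\ref{lemS} is fine and is exactly what Lemma~\ref{lemm3}(i) provides.
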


\begin{proof}
The assertion follows directly from \cite[Theorems~2.9 and 2.10]{KT26} together with assumption~\eqref{ber}. In the case $\Phi_n=\mathcal{T}_n$, the validity of~\eqref{ber} is provided by Lemma~\ref{lemm3}(i).
\end{proof}

The next lemma provides an MZ-type inequality with an optimal set of nodes.
As usual, the $n$th partial Fourier sum of $f\in L_1(\T)$ is denoted by
$$
S_n f(x):=\sum_{k=-n}^n \widehat{f}(k) e^{ikx}.
$$

\begin{lemma}\label{lemmz}  {\sc (See~\cite[Theorem~4.1]{KT26})}
Let $X(\T)$ be a Banach lattice such that $\sup_n\|S_n\|_{X\to X}<\infty$.
Then, for all $T\in \mathcal{T}_n$, we have
    \begin{equation*}
      \|T\|_{X}\asymp\bigg\|\sum_{k=0}^{2n} |T(t_k)|\chi_{[t_{k},t_{k+1})}\bigg\|_{X},\quad t_k=\frac{2\pi k}{2n+1},
     \end{equation*}
     where $\asymp$ denotes a two-sided inequality with positive constants independent of $T$ and $n$.
\end{lemma}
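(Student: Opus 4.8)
The plan is to establish the two inequalities hidden in ``$\asymp$'' separately, after first upgrading the hypothesis to Assumption~A. The bound $\sup_n\|S_n\|_{X\to X}<\infty$ forces the Ces\`aro (Fej\'er) means $\sigma_N f=f*F_N$ to be uniformly bounded on $X$, being averages of the $S_n$; since the Fej\'er kernel satisfies $0\le F_N(x)\lesssim\min\{N,(Nx^2)^{-1}\}$ with $F_N(x)\gtrsim N$ for $|x|\lesssim 1/N$, for every $h\in(0,\pi]$ and a suitable $N\asymp 1/h$ one has the pointwise majorization $\tfrac1h\chi_{[-h/2,h/2]}\le C F_N$, so the lattice property gives $|A_h f|\le C\sigma_N(|f|)$ and hence $\|A_h f\|_X\le C\|f\|_X$ (the range $h>\pi$ being trivial by property~3)\,). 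Thus $X$ satisfies Assumption~A, and the ``easy'' inequality $\big\|\sum_{k=0}^{2n}|T(t_k)|\chi_{[t_k,t_{k+1})}\big\|_X\le C\|T\|_X$ follows at once from Lemma~\ref{lemmz0} applied with the nodes $x_k=t_{k-1}$ (so that $x_{k+1}-x_k=\tfrac{2\pi}{2n+1}\le\tfrac{\pi}{n}$) and its ``in particular'' clause for $\Phi_n=\mathcal T_n$.

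For the reverse inequality I would set $\widetilde g:=\sum_{k=0}^{2n}T(t_k)\chi_{[t_k,t_{k+1})}$, so that $|\widetilde g|=\sum_{k=0}^{2n}|T(t_k)|\chi_{[t_k,t_{k+1})}$; then $\|\widetilde g\|_X$ is precisely the right-hand side and $\widetilde g\in X$ because it is bounded. The goal is to recover $T$ from $\widetilde g$ through a uniformly bounded chain of operators. One cannot do this with a single $\mathcal R^*$-kernel, because $\widetilde g$ generically has nonzero Fourier coefficients at frequencies in $(n,2n]$, which would force a sharp cut-off at frequency $n$ and a loss of $\log n$; the remedy is to first apply $S_n$ (this is where $\sup_n\|S_n\|<\infty$ is genuinely used, the estimate being false in $L_1$ and $L_\infty$) and only then invert a smooth symbol. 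Computing $\int_{[t_k,t_{k+1})}e^{-ijy}\,d\mu(y)$ and using the aliasing identity $\tfrac1{2n+1}\sum_{k=0}^{2n}T(t_k)e^{-ijt_k}=\widehat T(j)$ for $|j|\le n$ (valid since $T\in\mathcal T_n$) gives
\[
\widehat{\widetilde g}(j)=m_{n,j}\,\widehat T(j),\qquad
m_{n,j}=e^{-i\pi j/(2n+1)}\,\frac{\sin\bigl(\pi j/(2n+1)\bigr)}{\pi j/(2n+1)},\qquad |j|\le n,
\]
hence $S_n\widetilde g=\sum_{|j|\le n}m_{n,j}\widehat T(j)e^{ijx}$.

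The next step is to invert this multiplier. The symbol has the form $m_{n,j}=\psi\bigl(j/(2n+1)\bigr)$ for a fixed analytic function $\psi$ with $|\psi|\ge 2/\pi$ on $[-\tfrac12,\tfrac12]$; this lower bound — which makes $1/\psi$ a legitimate smooth symbol there — is exactly where the matching ``$2n+1$ nodes for degree~$n$'' enters, since $|j|/(2n+1)<\tfrac12$ for $|j|\le n$. Extending $1/\psi$ to a fixed $\eta\in C_c^\infty(\R)$ and setting $M_n(x)=\sum_{j\in\Z}\eta\bigl(j/(2n+1)\bigr)e^{ijx}$, repeated integration by parts produces a symmetrically decreasing majorant of $M_n$ with $\|M_n^*\|_{L_1(\T)}$ bounded uniformly in $n$, i.e.\ $M_n\in\mathcal R^*$. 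Since $\eta\bigl(j/(2n+1)\bigr)m_{n,j}=1$ for $|j|\le n$ and $S_n\widetilde g\in\mathcal T_n$, this yields the identity $T=(S_n\widetilde g)*M_n$, and then Lemma~\ref{lemS} (applicable because Assumption~A holds) together with the hypothesis gives
\[
\|T\|_X=\bigl\|(S_n\widetilde g)*M_n\bigr\|_X
\le C\,\|M_n^*\|_{L_1(\T)}\,\|S_n\widetilde g\|_X
\le C\,\|\widetilde g\|_X
=C\Bigl\|\sum_{k=0}^{2n}|T(t_k)|\chi_{[t_k,t_{k+1})}\Bigr\|_X.
\]

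The hard part will be the reverse direction, and within it two technical points: identifying the symbol $m_{n,j}$ precisely from the Fourier coefficients of the step function and checking that it is bounded away from zero over $|j|\le n$, and establishing the uniform bound $\|M_n^*\|_{L_1(\T)}\le C$ for the inverse kernel; once these are in place (and Assumption~A is derived from the hypothesis as above), the passage through $S_n$ and the application of Lemma~\ref{lemS} are routine. A variant would bypass Lemma~\ref{lemS} by expanding $1/\psi$ as an absolutely summable combination of de~la~Vall\'ee~Poussin symbols and invoking only $\sup_n\|S_n\|<\infty$, but the route above seems cleaner.
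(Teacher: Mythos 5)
Your proposal is correct, but note that the paper offers no internal proof of this lemma at all: it is imported from~\cite[Theorem~4.1]{KT26}, and the route native to this paper would be the duality argument of Proposition~\ref{prex1} specialized to $\mathcal{L}_n$ (since $T=\mathcal{L}_nT=\frac{1}{2n+1}\sum_k T(t_k)D_n(\cdot-t_k)$, one pairs with $g\in X'$, uses $\sup_n\|S_n\|_{X'\to X'}\le\sup_n\|S_n\|_{X\to X}<\infty$ and the easy MZ inequality in $X'$). Your argument takes a genuinely different path that stays entirely in $X$: first, the observation that uniform boundedness of $S_n$ gives uniformly bounded Fej\'er means whose kernels pointwise dominate $\tfrac1h\chi_{[-h/2,h/2]}$ correctly yields Assumption~A, which legitimizes both the easy direction (Lemma~\ref{lemmz0} with the nodes $t_k$, spacing $2\pi/(2n+1)\le\pi/n$, and $\Phi_n=\mathcal{T}_n$) and the later appeal to Lemma~\ref{lemS}. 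For the reverse direction, your Fourier computation is right: the quadrature identity on $2n+1$ equispaced nodes gives $\widehat{\widetilde g}(j)=\psi\bigl(j/(2n+1)\bigr)\widehat T(j)$ for $|j|\le n$ with $\psi(\xi)=e^{-i\pi\xi}\sin(\pi\xi)/(\pi\xi)$, and $|\psi|\ge 2/\pi$ on $[-\tfrac12,\tfrac12]$ exactly because $|j|\le n<(2n+1)/2$; extending $1/\psi$ to $\eta\in C_c^\infty$ (possible since $\psi$ vanishes only at nonzero integers) and setting $M_n=\sum_j\eta\bigl(j/(2n+1)\bigr)e^{ij\cdot}$, the identity $T=(S_n\widetilde g)*M_n$ holds, and the uniform bound $\sup_n\|M_n^*\|_{L_1(\T)}<\infty$ that you flag as the remaining technical point is settled exactly as in the second half of the proof of Proposition~\ref{prex1}, i.e.\ by the Trigub inequality~\eqref{trigub} applied to the compactly supported BV function $\eta$ (no integration by parts needed). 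So the two issues you label as ``the hard part'' are already covered by what you wrote together with the paper's own tools, and your constants depend only on $\sup_n\|S_n\|_{X\to X}$ and fixed kernels, as required. Comparing the two routes: the duality proof is shorter and avoids any multiplier inversion, but it works in the associate space $X'$ and leans on the transfer of Assumption~A and of the MZ upper bound to $X'$; your proof is self-contained on $X$, uses the hypothesis only through $\sigma_N$-domination and one application of $S_n$ to the step function, and has the side benefit of exhibiting the explicit reconstruction $T=(S_n\widetilde g)*M_n$ of a polynomial from its node averages, at the price of the sinc-symbol inversion.
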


In the spaces $X(\R)$, we have the following analogue of Lemma~\ref{lemmz0}.

\begin{lemma}\label{lemmz0r}
  Let $X(\R)$ be a Banach lattice, and let
$(x_k)_{k\in \Z}\subset \R$ be a set of points  such that,
for some constant $\g>0$,
$$
0<x_{k+1}-x_k\le\frac{\g}{\s},\quad k\in \Z,
$$
Then, for all $\vp\in \Phi_\s$, we have
\begin{equation}\label{TrMZ}
  \bigg\|\sum_{k\in\Z}|\vp(x_k)|\chi_{[x_k,x_{k+1})}\bigg\|_{X}\le C\|\vp\|_X,
\end{equation}
  where the constant $C$ is independent of $\vp$ and $\s$. In particular, \eqref{TrMZ} holds if $X(\R)$ is a Banach lattice satisfying Assumption~A and $\Phi_\s=\mathcal{B}_X^\s$.
\end{lemma}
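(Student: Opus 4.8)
The plan is to reduce inequality~\eqref{TrMZ} to a pointwise domination of the step function on its left-hand side by a convolution of $|\vp|$ with a fixed, $L_1$-normalized kernel, from which the estimate follows by Lemma~\ref{lemS} together with the lattice identity $\||\vp|\|_X=\|\vp\|_X$. First I would fix notation: set $h:=\gamma/\s$ and $I_k:=[x_k,x_{k+1})$, so the $I_k$ are pairwise disjoint with union $\R$, $|I_k|\le h$, and $I_k\subset[x-h,x+h]$ whenever $x\in I_k$. Two elementary pointwise bounds are then available. Since $\vp$ is locally absolutely continuous ($\vp'\in X$ by~\eqref{ber}), for $x\in I_k$ one has $|\vp(x_k)|\le|\vp(x)|+\int_{I_k}|\vp'(t)|\,dt$, and summing over $k$ gives, pointwise on $\R$,
\[
\sum_{k\in\Z}|\vp(x_k)|\,\chi_{I_k}\ \le\ |\vp|+\bigl(|\vp'|*\chi_{[-h,h]}\bigr).
\]
Alternatively, when $\Phi_\s=\mathcal B_X^\s$, the reproducing formula for band-limited functions yields a bound not involving $\vp'$; this is the route I would take for the ``In particular'' assertion.

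Second, for that assertion I would use the reproducing identity. Fix $h_0\in\mathcal S(\R)$ with compactly supported Fourier transform so that, with $h_\s(y):=\s h_0(\s y)$, one has $\vp*h_\s=\vp$ for all $\vp\in\mathcal B_X^\s$ (such $h_0$ exists: choose $\widehat{h_0}$ compactly supported and suitably constant near the origin). The convolution $\vp*h_\s$ is well defined and continuous, so we may and do take it as the canonical representative of $\vp$; hence $\vp(x_k)=\int_\R\vp(u)\,h_\s(x_k-u)\,du$. Choosing a symmetrically decreasing integrable majorant $\Psi_0$ of $h_0$ (possible because $h_0\in\mathcal S(\R)$) gives $|\vp(x_k)|\le\s\int_\R|\vp(u)|\,\Psi_0(\s(x_k-u))\,du$. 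For $x\in I_k$ the triangle inequality yields $\s|x_k-u|\ge\s|x-u|-\gamma$, so by monotonicity of $\Psi_0$ we have $\Psi_0(\s(x_k-u))\le\widetilde\Psi(\s(x-u))$ with $\widetilde\Psi(s):=\Psi_0(\max\{|s|-\gamma,0\})$, which is again even, non-increasing on $\R_+$, and integrable with $\|\widetilde\Psi\|_{L_1(\R)}=2\gamma\,\Psi_0(0)+\|\Psi_0\|_{L_1(\R)}$, a quantity independent of $\s$. Thus $|\vp(x_k)|\le(|\vp|*K_\s)(x)$ for $x\in I_k$, where $K_\s(y):=\s\widetilde\Psi(\s y)\in\mathcal R$ has $\|K_\s\|_{L_1(\R)}=\|\widetilde\Psi\|_{L_1(\R)}$; summing over the disjoint $I_k$ gives $\sum_{k}|\vp(x_k)|\chi_{I_k}\le|\vp|*K_\s$, and Lemma~\ref{lemS} finishes~\eqref{TrMZ}. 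If in addition $X$ satisfies Assumption~A, the first pointwise bound gives a second proof: $\bigl\||\vp'|*\chi_{[-h,h]}\bigr\|_X\le 2Ch\|\vp'\|_X$ by Lemma~\ref{lemS} (as $\chi_{[-h,h]}\in\mathcal R$ with $L_1$-norm $2h$), and $\|\vp'\|_X\le B\s\|\vp\|_X$ by~\eqref{ber}, so the right-hand side is $\le 2CB\gamma\|\vp\|_X$; this argument works for an arbitrary $\Phi_\s$ obeying~\eqref{ber}, provided $X$ satisfies Assumption~A.

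Finally, for a general Banach lattice $X(\R)$ not assumed to satisfy Assumption~A, Lemma~\ref{lemS} is unavailable and the term $|\vp'|*\chi_{[-h,h]}$ cannot be estimated by this soft argument; handling it is the main obstacle. Here I would proceed exactly as in the proof of Lemma~\ref{lemmz0}, invoking the real-line analogues of~\cite[Theorems~2.9 and~2.10]{KT26} together with~\eqref{ber}. The point is that the localization technique behind those discretization estimates decomposes $\vp$ according to the mesh $(x_k)$ and uses only the Bernstein inequality~\eqref{ber}, so no boundedness of an averaging operator on $X$ is required. Transferring that machinery from $\T$ to $\R$ — in particular, verifying that all constants stay uniform in $\s$ — is the technical heart of the matter; the reduction above and the case under Assumption~A are routine.
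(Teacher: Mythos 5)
Your proposal is correct, but it takes a more self-contained route than the paper for the cases that matter. The paper's own proof of Lemma~\ref{lemmz0r} is a one-line reduction: it is declared analogous to Lemma~\ref{lemmz0}, whose proof consists of citing \cite[Theorems~2.9 and 2.10]{KT26} together with the Bernstein assumption~\eqref{ber} (with Lemma~\ref{lemm3}(i) supplying~\eqref{ber} for $\mathcal{B}_X^\s$ under Assumption~A). You instead prove the Assumption~A cases directly: the pointwise bound $\sum_k|\vp(x_k)|\chi_{I_k}\le|\vp|+|\vp'|*\chi_{[-h,h]}$ plus Lemma~\ref{lemS} and~\eqref{ber} handles any $\Phi_\s$ obeying~\eqref{ber}, and the reproducing-kernel argument with the flattened majorant $\widetilde\Psi(s)=\Psi_0(\max\{|s|-\g,0\})$ gives the ``In particular'' assertion without differentiating $\vp$; both computations are sound (note also that $|\vp'|*\chi_{[-h,h]}=2h\,A_{2h}|\vp'|$, so Assumption~A alone suffices there without invoking Lemma~\ref{lemS}). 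What your approach buys is transparency and uniformity of constants in $\s$ for exactly the setting the paper later uses (Assumption~A, $\Phi_\s=\mathcal{B}_X^\s$); what it does not buy is the full generality of the statement, since for a Banach lattice without Assumption~A you fall back on the real-line analogues of the KT26 discretization theorems --- which is precisely what the paper does, so this is not a gap relative to the paper's own level of detail. Two minor points: the intervals $I_k$ need not cover $\R$ (no lower bound on $x_{k+1}-x_k$ is assumed), but your argument only uses their disjointness; and your explicit choice of the continuous (entire) representative of $\vp\in\mathcal{B}_X^\s$ is a reasonable clarification consistent with the paper's pointwise-values convention.
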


\begin{proof}
The proof is analogous to that of Lemma~\ref{lemmz0}.
\end{proof}

\subsection{Moduli of smoothness and errors of best approximation}

If $X$ is a translation invariant Banach lattice on $\Omega$, then the integral  (classical) modulus of smoothness of a function $f\in X$
of order $r\in \N$ and step $\d>0$ is defined by
\begin{equation}\label{eqmod1}
    \w_r(f,\d)_X:=\sup_{0<h\le \d} \Vert \D_h^r f\Vert_X,
\end{equation}
where
$$
\D_h^r f(x):=\sum_{\nu=0}^r\binom{r}{\nu}(-1)^{\nu} f(x+(r-\nu)h),
$$
$\binom{r}{\nu}=\frac{r (r-1)\dots (r-\nu+1)}{\nu!},\quad \binom{r}{0}=1$. 

In the case of non-translation-invariant spaces $X$, this modulus of smoothness is already not suitable and do not make sense in some particular situations. To consider our problems in more general family of Banach lattices, we will use a special moduli of smoothness, which are based on the averaged operators $A_h$ and $\dot A_h$.

It follows from~\cite[Theorem~4.6 and Corollary~3.1]{V23} and~\cite[Theorem~2.4, Ch.~6]{DL} that, for all $f\in L_p(\Omega)$, $1\le p<\infty$, $r\in \N$, and $h>0$, the following relations hold:
\begin{equation}\label{modav1}
  \sup_{0\le t\le h}\|(I-A_t)^r f\|_{L_p}\asymp \|(I-A_h)^r f\|_{L_p}\asymp \w_{2r}(f,h)_{L_p},
\end{equation}
\begin{equation}\label{modav2}
  \sup_{0\le t\le h}\|(I-\dot A_{t})^r f\|_{L_p}\asymp \|(I-\dot A_{h})^r f\|_{L_p}\asymp \w_r(f,h)_{L_p},
\end{equation}
where $\asymp$ denotes  a two-sided inequality with constants independent of $f$ and $h$.
In particular, these equivalences imply that the quantities in~\eqref{modav1} and~\eqref{modav2} involving the averaging operators $A_h$ and $\dot A_h$ exhibit essentially the same behaviour as the classical moduli of smoothness
in $L_p(\Omega)$, cf. Lemmas~\ref{lemm1}--\ref{lemm3}.  However, unlike the moduli of smoothness, these quantities remain applicable in the setting of non-translation-invariant spaces $X$.

The error of best approximation and the error of best one-sided approximation of $f\in X(\T)$ by polynomials from $\mathcal{T}_n$ are given by
$$
E_n(f)_X:=\inf\{\Vert f-T_n\Vert_X\,:\, T_n\in \mathcal{T}_n\}
$$
and
\begin{equation}\label{er_best_ones}
  \widetilde{E}_n(f)_X:=\inf\{\Vert Q_n-q_n\Vert_X\,:\,  Q_n,q_n \in \mathcal{T}_n,\quad q_n(x)\le f(x)\le Q_n(x)\},
\end{equation}
respectively.

With a slight abuse of notation, the error of best approximation and the error of best one-sided approximation of $f\in X(\R)$ by band-limited functions of order $\s>0$ are given by
$$
{E}_\s(f)_X:=\inf\{\Vert f-g_\s\Vert_X\,:\, g_\s\in \mathcal{B}_X^\s\}
$$
and
$$
\widetilde{{E}}_\s(f)_X:=\inf\{\Vert Q_\s-q_\s\Vert_X\,:\,  Q_\s,q_\s \in \mathcal{B}_X^\s,\quad q_\s(x)\le f(x)\le Q_\s(x)\}.
$$

For the results presented in the next three lemmas, see~\cite{V23}.

\begin{lemma}\label{lemm1}
Let $X$ be a Banach lattice satisfying Assumption~A.
For $f\in X$,\, $h,\l>0$, and $r,s\in \N$, $s\le r$, the following properties hold:

\begin{enumerate}
  \item[(i)]  $\|(I-A_{h})^r f\|_X\le C\|f\|_X$,\\
                $\|(I-\dot A_{h})^r f\|_X\le C\|f\|_X$;
  \medskip

  \item[(ii)]
               $
                \|(I-A_{\lambda h})^r f\|_{X}\le C (1+\l^{2r})\|(I-A_{h})^r f\|_{X},
               $\\
               $
                \|(I-\dot A_{\lambda h})^r f\|_{X}\le C (1+\l^{r})\|(I-\dot A_{h})^r f\|_{X};
               $
  \medskip

  \item[(iii)]
    $
    C^{-1}\|(I-A_{h})^r f\|_{X}\le \|(I-\dot A_{h})^{2r} f\|_{X} \le C\|(I-A_{h})^r f\|_{X};
    $

  \medskip

  \item[(iv)]  if $f\in X^{2s}$, then  $\|(I-A_{h})^r f\|_{X} \le Ch^{2s}  \|(I-A_{h})^{r-s} f^{(2s)}\|_{X}$,\\
                  if $f\in X^s$, then $\|(I-\dot A_{h})^r f\|_{X} \le Ch^{s}  \|(I-\dot A_{h})^{r-s} f^{(s)}\|_{X}$.
\end{enumerate}
In all these inequalities, the constant $C$ is independent of $f$, $h$, and $\l$.
\end{lemma}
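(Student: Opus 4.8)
The plan is to reduce all four statements to Lemma~\ref{lemS} by noting that every operator occurring below is a convolution with an integrable kernel; Lemma~\ref{lemS} then converts $L_1$-bounds on kernels (and on their symmetrically decreasing majorants) into bounds on $X$. Indeed $A_hf=f*\bigl(\tfrac1h\chi_{[-h/2,h/2]}\bigr)$ and $\dot A_hf=f*\bigl(\tfrac1h\chi_{[-h,0]}\bigr)$, so the iterates $A_h^j$ and $\dot A_h^j$ are convolutions with the $j$-fold self-convolutions of these kernels, which are bounded and compactly supported, hence admit symmetrically decreasing majorants of $L_1$-norm $\le C_j$. Expanding $(I-A_h)^r$ and $(I-\dot A_h)^r$ by the binomial theorem and invoking Lemma~\ref{lemS} termwise proves~(i).

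For~(iv) I would use Taylor's formula with integral remainder. If $f\in X^2$, the first-order term cancels by symmetry and $(A_h-I)f=f''*K_h$ with $K_h(v)=\tfrac1{2h}(h/2-|v|)_+^2$, an even, nonnegative, radially decreasing kernel with $\|K_h\|_{L_1}\asymp h^2$; if $f\in X^1$, then $(\dot A_h-I)f=f'*L_h$ with $L_h(v)=(1+v/h)\chi_{[-h,0]}(v)$ and $\|L_h\|_{L_1}\asymp h$. Since $(I-A_h)^{r-1}$ commutes with convolution by $K_h$, Lemma~\ref{lemS} gives $\|(I-A_h)^rf\|_X\le Ch^2\|(I-A_h)^{r-1}f''\|_X$, and likewise $\|(I-\dot A_h)^rf\|_X\le Ch\|(I-\dot A_h)^{r-1}f'\|_X$; iterating $s$ times, and using~(i) to keep the remaining power bounded, yields~(iv).

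I would then obtain~(ii) and~(iii) from the $K$-functional equivalences
$$
\|(I-A_h)^rf\|_X\asymp K_{2r}(f,h^{2r})_X,\qquad \|(I-\dot A_h)^rf\|_X\asymp K_r(f,h^r)_X,
$$
where $K_m(f,t)_X:=\inf\{\|f-g\|_X+t\|g^{(m)}\|_X:g\in X^m\}$. Granting them, part~(iii) is immediate because both sides are $\asymp K_{2r}(f,h^{2r})_X$, while part~(ii) follows from the elementary bound $K_m(f,(\l h)^m)_X\le(1+\l^m)K_m(f,h^m)_X$ together with~(i) and~(iv): for any $g\in X^{2r}$,
$$
\|(I-A_{\l h})^rf\|_X\le C\|f-g\|_X+C(\l h)^{2r}\|g^{(2r)}\|_X\le C(1+\l^{2r})\bigl(\|f-g\|_X+h^{2r}\|g^{(2r)}\|_X\bigr),
$$
and one passes to the infimum over $g$ (the $\dot A_h$ case being identical). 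In the equivalences, the inequality $\|(I-A_h)^rf\|_X\le CK_{2r}(f,h^{2r})_X$ is exactly this computation with $\l=1$; the reverse inequality is the essential part.

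For the reverse inequality I plan to use the smoothing operator $V_h:=I-(I-A_h^{4r})^r=\sum_{l=1}^r\binom rl(-1)^{l+1}A_h^{4rl}$, which reproduces polynomials of degree $<2r$. The geometric identity $(I-A_h^{4r})^r=\bigl(\sum_{j=0}^{4r-1}A_h^j\bigr)^r(I-A_h)^r$ together with~(i) gives $\|f-V_hf\|_X\le C\|(I-A_h)^rf\|_X$. The top-order derivative is controlled by commuting averaging with differentiation: from $(A_h^2g)'=h^{-1}(\dot A_h-\dot A_h^-)g$, where $\dot A_h^-g(x):=A_hg(x-h/2)$, one gets, for $l\ge1$, $(A_h^{4rl}g)^{(2r)}=h^{-2r}\bigl((T_h-2I+T_{-h})A_h^2\bigr)^rA_h^{4r(l-1)}g$ with $T_\tau g:=g(\cdot+\tau)$, and $(T_h-2I+T_{-h})A_h^2=W(I-A_h)$, where $W$ is convolution with the kernel whose Fourier multiplier $\xi\mapsto\dfrac{-4\sin^4(h\xi/2)}{(h\xi/2)\bigl((h\xi/2)-\sin(h\xi/2)\bigr)}$ is smooth, bounded, and $O(|\xi|^{-2})$ along with its derivatives, hence lies in $\mathcal{R}^*$ uniformly in $h$. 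It follows that $h^{2r}\|(V_hf)^{(2r)}\|_X\le C\|(I-A_h)^rf\|_X$ via Lemma~\ref{lemS}, which completes the equivalence; the $\dot A_h$-variant is parallel, with $(\dot A_h^2g)'=h^{-1}(T_h-I)\dot A_hg$ and the multiplier $\dfrac{(e^{ih\xi}-1)^2}{ih\xi-e^{ih\xi}+1}$, which after an elementary splitting off of its non-integrable part again corresponds to an $\mathcal{R}^*$ kernel. I expect this last step to be the main obstacle: since $A_h$ reproduces only constants, one has to manufacture a polynomial-reproducing approximant of $f$ out of high powers of $A_h$ alone, bound its $2r$-th derivative by $\|(I-A_h)^rf\|_X$ rather than by $\|f\|_X$, and verify the $X$-boundedness of the auxiliary ``division'' operators such as $W$; the remaining bookkeeping is routine once Lemma~\ref{lemS} is available.
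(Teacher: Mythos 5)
Your proposal does not follow the paper, because the paper contains no proof of this lemma: it is quoted wholesale from Vinogradov \cite{V23} (``For the results presented in the next three lemmas, see \cite{V23}''). What you have written is a reconstruction of that argument from the paper's own toolkit, and as a strategy it is correct: (i) and (iv) follow from the explicit convolution kernels of $A_h$ and $\dot A_h$ (box, triangle, and the Taylor kernels $\tfrac1{2h}(h/2-|v|)_+^2$, $(1+v/h)\chi_{[-h,0]}(v)$) together with Lemma~\ref{lemS}; (ii) and (iii) reduce to the two $K$-functional equivalences, of which the $\dot A_h$ one is exactly \eqref{k1} and could simply be cited, while the $A_h$/$K_{2r}$ one you prove by the standard smoothing scheme: $V_h=I-(I-A_h^{4r})^r$, the factorization $I-A_h^{4r}=(\sum_{j<4r}A_h^j)(I-A_h)$ for the error term, and the derivative identity $(A_h^2g)'=h^{-1}(\dot A_h-\dot A_h^-)g$ plus the division operator $W$ for the term $h^{2r}\|(V_hf)^{(2r)}\|_X$; your multiplier $-4\sin^4u/\bigl(u(u-\sin u)\bigr)$ is indeed smooth, bounded, $O(u^{-2})$ together with two derivatives, so its kernel is in $\mathcal{R}^*$ by the reasoning of Remark~\ref{reml1}, uniformly in $h$ by dilation invariance, and translations never hit $f$ directly, which is what saves you in non-translation-invariant $X$. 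Two points must still be carried out to make this a complete proof. First, the lemma is stated for $\Omega=\T$ as well as $\Omega=\R$, and your symbol computations live on the line; in the periodic case the kernel bounds have to be transferred by periodization/discretization of the symbol, e.g.\ via \eqref{trigub} exactly as in the proof of Proposition~\ref{prex1} (this is routine since your majorants are integrable and symmetrically decreasing). Second, in the $\dot A_h$ case the symbol $(e^{ih\xi}-1)^2/(ih\xi-e^{ih\xi}+1)$ decays only like $|\xi|^{-1}$, so the ``elementary splitting'' must actually be performed: subtract $(e^{ih\xi}-1)^2/(ih\xi)$, which is the symbol of the operator $(T_h-I)\dot A_h$ with bounded compactly supported kernel, and check that the remainder $(e^{ih\xi}-1)^3/\bigl(ih\xi(ih\xi-e^{ih\xi}+1)\bigr)$ is smooth with $O(|\xi|^{-2})$ decay, hence has an $\mathcal{R}^*$ kernel. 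With those completions your argument goes through; what it buys over the paper's citation is a self-contained proof using only Assumption~A and Lemma~\ref{lemS}, at the cost of the multiplier bookkeeping that \cite{V23} was invoked to avoid.
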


The next lemma gives analogues of the direct and inverse approximation theorems in $X$  for trigonometric polynomials and band-limited functions.

\begin{lemma}\label{lemm2}
Let $X$ be a Banach lattice satisfying Assumption~A.
For $f\in X$, $\g>0$, and $r\in \N$, the following properties hold:
\begin{enumerate}

  \item[(i)]  $E_n(f)_X\le C\|(I-A_{\g/n})^r f\|_{X}$,\\
  \medskip
               $E_n(f)_X\le C\|(I-\dot A_{\g/n})^r f\|_{X}$;

  \medskip

  \item[(ii)]  $\|(I-A_{{\g}/{n}})^r f\|_{X}\le \tfrac{C}{n^{2r}}\sum_{k=0}^{n-1} (k+1)^{2r-1}E_k(f)_X$,\\
  \medskip
                 $\|(I-\dot A_{{\g}/{n}})^r f\|_{X}\le \tfrac{C}{n^{r}}\sum_{k=0}^{n-1} (k+1)^{r-1}E_k(f)_X$.
\end{enumerate}
In all these inequalities, the constant  $C$ is independent of $f$ and $n$.
\end{lemma}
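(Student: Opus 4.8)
The plan is to establish the inverse estimates (ii) by a dyadic argument, and the direct estimates (i) by a de la Vall\'ee Poussin-type reproduction combined with the smoothing operator $B_h:=I-(I-A_h)^r$. Throughout we argue for $A_h$; the statements for $\dot A_h$ follow by the same reasoning, using the second halves of Lemma~\ref{lemm1}(i),(iv) and the fact that the Fourier multiplier $1-\widehat{\dot A_h}$ vanishes at the origin only to first order (which accounts for the exponents $r$, $n^{-r}$, $(k+1)^{r-1}$ in place of $2r$, $n^{-2r}$, $(k+1)^{2r-1}$), or alternatively directly from Lemma~\ref{lemm1}(iii); the band-limited case over $\R$ is entirely analogous, with $\mathcal T_n$ replaced by $\mathcal B_X^\s$ and~\eqref{ber} used in that setting.

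For (ii), let $T_m\in\mathcal T_m$ be a polynomial of best approximation to $f$ in $X$, pick $m_0$ with $2^{m_0}\le n<2^{m_0+1}$, and write $f$, modulo a constant which $(I-A_{\g/n})^r$ annihilates, as $(f-T_{2^{m_0+1}})+(T_1-T_0)+\sum_{s=0}^{m_0}U_s$, where $U_s:=T_{2^{s+1}}-T_{2^{s}}\in\mathcal T_{2^{s+1}}$ and $\|U_s\|_X\le 2E_{2^{s}}(f)_X$. The tail is handled by Lemma~\ref{lemm1}(i): $\|(I-A_{\g/n})^r(f-T_{2^{m_0+1}})\|_X\le CE_{2^{m_0+1}}(f)_X$. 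For a single block, Lemma~\ref{lemm1}(iv) with $s=r$ together with~\eqref{ber} gives
$$\|(I-A_{\g/n})^rU_s\|_X\le C\Big(\tfrac{\g}{n}\Big)^{2r}\|U_s^{(2r)}\|_X\le C\Big(\tfrac{2^{s}}{n}\Big)^{2r}E_{2^{s}}(f)_X,$$
and likewise $\|(I-A_{\g/n})^r(T_1-T_0)\|_X\le Cn^{-2r}E_0(f)_X$. Summing and passing from the dyadic sum to $\tfrac{C}{n^{2r}}\sum_{k=0}^{n-1}(k+1)^{2r-1}E_k(f)_X$ (using that $k\mapsto E_k(f)_X$ is non-increasing, and bounding $E_{2^{m_0+1}}(f)_X\le CE_{\lfloor n/2\rfloor}(f)_X$ the same way) yields (ii); only finitely many blocks appear, so convergence is not an issue.

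For (i), Lemma~\ref{lemm1}(ii) lets us replace $\g$ by any fixed $\g_0>0$, so we may take $h=\g_0/n$ with $\g_0$ large. The operator $B_h=I-(I-A_h)^r=\sum_{j=1}^r\binom rj(-1)^{j+1}A_h^{j}$ is convolution with a kernel $b_h$ dominated by a combination of self-convolutions of the box $\tfrac1h\chi_{[-h/2,h/2]}\in\mathcal R$, hence $b_h\in\mathcal R^*$ with $\|b_h^{*}\|_{L_1}\le 2^r$. Fix $\eta\in C^\infty(\R)$ with $\eta\equiv1$ on $[-\tfrac12,\tfrac12]$ and $\supp\eta\subset[-1,1]$, and let $\Theta_n$ be the kernel with $\widehat{\Theta_n}(k)=\eta(k/n)$; then $\Theta_n\in\mathcal R^*$ with $\sup_n\|\Theta_n^{*}\|_{L_1}<\infty$ and $\Theta_n*B_h*f\in\mathcal T_n$, so
$$E_n(f)_X\le\|f-\Theta_n*B_h*f\|_X\le\|f-B_hf\|_X+\|(I-\Theta_n)B_hf\|_X=\|(I-A_h)^rf\|_X+\|(I-\Theta_n)B_hf\|_X.$$
Everything therefore reduces to proving $\|(I-\Theta_n)B_hf\|_X\le C\|(I-A_h)^rf\|_X$. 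On the Fourier side, $(I-\Theta_n)B_hf$ is obtained from $(I-A_h)^rf$ through the multiplier $k\mapsto(1-\eta(k/n))\big(\lambda_{h,r}(k)^{-1}-1\big)$, where $\lambda_{h,r}(k)=\big(1-\tfrac{2}{kh}\sin\tfrac{kh}{2}\big)^r$ is the multiplier of $(I-A_h)^r$; this symbol is supported in $|k|>n/2$, where $\lambda_{h,r}(k)$ is bounded below by a positive constant because $\g_0$ is large, and the cut-off removes its pole at the origin. The plan is to represent it as the Fourier transform of a kernel $\Psi_n\in\mathcal R^*$ with $\sup_n\|\Psi_n^{*}\|_{L_1}<\infty$; then Lemma~\ref{lemS} gives $\|(I-\Theta_n)B_hf\|_X=\|\Psi_n*(I-A_h)^rf\|_X\le C\|(I-A_h)^rf\|_X$, finishing (i). (One may equivalently route this through the $K$-functional $\inf\{\|f-g\|_X+(\g_0/n)^{2r}\|g^{(2r)}\|_X\}$ and the easy Jackson inequality $E_n(g)_X\le Cn^{-2r}\|g^{(2r)}\|_X$ for $g\in X^{2r}$ — itself a short consequence of Lemma~\ref{lemS} — but this produces the same kernel estimate.)

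The construction of $\Psi_n$ is the step I expect to be the genuine obstacle and is the heart of the matter: one must verify that the product of the high-pass symbol $1-\eta(\cdot/n)$ with the smooth bounded function $\lambda_{h,r}^{-1}-1$ is the transform of an $\mathcal R^*$-kernel whose $L_1$-majorant is uniform in $n$ — note that $\lambda_{h,r}^{-1}-1$ decays only like $O(1/|k|)$, so a crude multiplier estimate does not suffice. The plan here is to exploit the explicit structure of the Steklov multiplier — integer powers of $k\mapsto\tfrac2{kh}\sin\tfrac{kh}{2}$ are Fourier transforms of B-splines at scale $h$, which lie in $\mathcal R$ — and the fact that the cut-off kills the low-frequency singularity, so that $\Psi_n$ is, up to rapidly decaying corrections, a convolution of such B-splines (scaled like $\g_0/n$) with a fixed Schwartz function, hence dominated by a summable symmetrically decreasing majorant with norm independent of $n$. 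Granting this, all remaining ingredients — boundedness of $\Theta_n$ and $B_h$ on $X$, the Bernstein inequality~\eqref{ber} for $\mathcal T_n$ and $\mathcal B_X^\s$, and the dyadic bookkeeping in (ii) — are routine applications of Lemma~\ref{lemS} and~\eqref{ber}.
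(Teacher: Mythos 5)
First, a point of reference: the paper offers no proof of Lemma~\ref{lemm2} at all --- it is quoted verbatim from~\cite{V23} (``For the results presented in the next three lemmas, see~\cite{V23}''), so your proposal can only be judged on its own merits, not against an internal argument. Your part (ii) is complete and correct: the dyadic decomposition into near-best blocks $U_s$, Lemma~\ref{lemm1}(i) for the tail, Lemma~\ref{lemm1}(iv) with $s=r$ combined with the Bernstein inequality~\eqref{ber} for each block, and the standard resummation using monotonicity of $E_k(f)_X$ is exactly the technique the paper itself employs later (cf.~\eqref{inv2}--\eqref{inv3}), and the passage to the $\dot A_h$ version with the exponents $r$, $n^{-r}$, $(k+1)^{r-1}$ is justified by the second halves of Lemma~\ref{lemm1}(i),(iv).

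Part (i), however, contains a genuine gap, and you name it yourself. The reduction is sound: enlarging $\g$ via Lemma~\ref{lemm1}(ii), testing with $\Theta_n*B_hf\in\mathcal T_n$, and observing that $(I-\Theta_n*)B_hf$ is obtained from $(I-A_h)^rf$ through the symbol $(1-\eta(k/n))\bigl(\lambda_{h,r}(k)^{-1}-1\bigr)$, which is well defined because the cutoff confines it to $|k|>n/2$ where $\lambda_{h,r}(k)\ge 2^{-r}$ once $\g_0$ is large. But the entire weight of the Jackson estimate then rests on the claim that this symbol is carried by a kernel $\Psi_n\in\mathcal R^*$ with $\sup_n\|\Psi_n^*\|_{L_1(\T)}<\infty$, and this you explicitly do not prove (``the heart of the matter''), offering only a sketch. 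Moreover, the sketch as stated does not go through verbatim: if one expands $\lambda_{h,r}(k)^{-1}-1=\sum_{j\ge1}c_j\,\operatorname{sinc}(kh/2)^j$ and replaces each power of $\operatorname{sinc}$ by its B-spline kernel, the coefficients of $(1-s)^{-r}$ grow like $j^{r-1}$, so the termwise kernel bound $\sum_j|c_j|\cdot O(1)$ diverges --- the geometric smallness $|\operatorname{sinc}(kh/2)|\le 4/\g_0$ is a pointwise gain on $|k|>n/2$ and is not inherited by the $L_1$-norms of the B-spline kernels. A workable version needs, e.g., a finite Neumann expansion $\frac{1-\lambda}{\lambda}=\sum_{m=1}^{M}(1-\lambda)^m+\frac{(1-\lambda)^{M+1}}{\lambda}$, with the finitely many B-spline terms handled directly (the $j=1$ box term by exact periodization, since $\operatorname{sinc}$ is not of bounded variation and so~\eqref{trigub} does not apply to it) and the remainder handled by~\eqref{trigub} or Poisson summation after checking decay of the rescaled symbol and of enough of its derivatives; alternatively one can route everything through the $K$-functional equivalence~\eqref{k1} plus a Jackson inequality for $g\in X^{2r}$, but~\eqref{k1} is precisely the main result of~\cite{V23}, i.e.\ the very source the paper cites for this lemma, so nothing is gained without carrying out an argument of the above type. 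Until that uniform $\mathcal R^*$-bound (or an equivalent transference estimate in the spirit of~\cite[Ch.~7]{TB}) is actually established, statement (i) remains unproved in your write-up.
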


Analogues of the Bernstein inequality and Nikolskii-Stechkin-Boas inequality are given in the following lemma.

\begin{lemma}\label{lemm3}
Let $X$ be a Banach lattice satisfying Assumption~A.
For $T\in \mathcal{T}_n$ (or $T\in \mathcal{B}_X^n$ in the case $\Omega=\R$), $\g>0$, and $r\in \N$, the following properties hold:
\begin{enumerate}

   \item[(i)]  $\|T^{(r)}\|_X\le C n^r \|T\|_X$;

  \item[(ii)]  $\|T^{(r)}\|_X\le C n^r \|(I-\dot A_{{\g}/{n}})^r T\|_{X}$.
\end{enumerate}
Moreover, if $f\in X$ and $T$ satisfies $\|f-T\|_X\le 2E_n(f)_X$. Then
   $$
   \|T^{(r)}\|_X\le C n^r \|(I-\dot A_{{\g}/{n}})^r f\|_{X}.
   $$
In all these inequalities, the constant  $C$ is independent of $f$ and $n$.
\end{lemma}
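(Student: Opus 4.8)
The plan is to reduce all three parts of the lemma, via the convolution inequality of Lemma~\ref{lemS}, to the construction of a single family of multiplier kernels. I would treat the periodic case $\Omega=\T$ in detail; the case $\Omega=\R$ is entirely analogous, with periodization of a kernel replaced by a genuine Schwartz kernel. The heart of the matter is inequality~(ii); parts~(i) and the ``moreover'' statement will follow from~(ii) by short arguments using Lemmas~\ref{lemm1} and~\ref{lemm2}.

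For~(ii), fix once and for all a cut-off $\eta\in C^\infty(\R)$ with $\eta\equiv1$ on $[-1,1]$ and $\supp\eta\subset[-2,2]$, and put $h=\g/n$. On $\mathcal{T}_n$ the operator $I-\dot A_h$ acts as the Fourier multiplier $k\mapsto m_h(k):=1-\frac{e^{ikh}-1}{ikh}$, and the key elementary identity I would first establish is the factorization $m_h(k)=-\frac{ikh}{2}\,\psi(kh)$, where $\psi(\t):=\frac{2(1+i\t-e^{i\t})}{\t^{2}}$ extends to an entire function with $\psi(0)=1$ and, crucially, has no zeros on $\R$ (since $|1+i\t|=\sqrt{1+\t^{2}}>1=|e^{i\t}|$ for $\t\ne0$). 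Consequently $q_h(k):=ik/m_h(k)=-2/(h\,\psi(kh))$ is smooth in $k$ and of size $\asymp n$ on $[-2n,2n]$. I would then let $\Psi_n$ be the trigonometric polynomial of degree $<2n$ with $\widehat{\Psi_n}(k)=q_h(k)^r\eta(k/n)$; since $m_h(k)q_h(k)=ik$ and $\eta(k/n)=1$ for $|k|\le n$, one obtains $[(I-\dot A_h)^rT]*\Psi_n=T^{(r)}$ for every $T\in\mathcal{T}_n$. Writing $\widehat{\Psi_n}(k)=n^rG(k/n)$ with $G(\zeta):=(-2/\g)^r\eta(\zeta)\psi(\g\zeta)^{-r}\in C^\infty_c(\R)$ and summing by parts twice, I would get $|\Psi_n(x)|\le C\min\{n^{r+1},\,n^{r-1}|x|^{-2}\}$ on $[-\pi,\pi)$; the right-hand side is a symmetrically decreasing majorant of $\Psi_n$ with $L_1(\T)$-norm $\le Cn^r$. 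Lemma~\ref{lemS} then yields $\|T^{(r)}\|_X=\|[(I-\dot A_h)^rT]*\Psi_n\|_X\le Cn^r\|(I-\dot A_h)^rT\|_X$.

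With~(ii) available, inequality~(i) is immediate: taking $\g=1$ and applying Lemma~\ref{lemm1}(i) gives $\|T^{(r)}\|_X\le Cn^r\|(I-\dot A_{1/n})^rT\|_X\le Cn^r\|T\|_X$. (Alternatively one can argue directly with the de la Vall\'ee-Poussin polynomial $\widehat{V_n}(k)=\eta(k/n)$, for which $T^{(r)}=V_n^{(r)}*T$ and $\|V_n^{(r)}\|_{L_1(\T)}\le Cn^r$.) For the ``moreover'' part, suppose $\|f-T\|_X\le2E_n(f)_X$. By Lemma~\ref{lemm1}(i), $\|(I-\dot A_{\g/n})^r(f-T)\|_X\le C\|f-T\|_X\le2CE_n(f)_X$, and by Lemma~\ref{lemm2}(i), $E_n(f)_X\le C\|(I-\dot A_{\g/n})^rf\|_X$; combining these with the triangle inequality gives $\|(I-\dot A_{\g/n})^rT\|_X\le C'\|(I-\dot A_{\g/n})^rf\|_X$, and one application of~(ii) finishes the proof.

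The only step requiring genuine care, and hence the main obstacle, is verifying that $\Psi_n$ admits a symmetrically decreasing majorant of $L_1$-norm $O(n^r)$, so that Lemma~\ref{lemS} is applicable; this is precisely where the factorization $m_h(k)=-\frac{ikh}{2}\psi(kh)$ with $\psi$ zero-free on $\R$ enters, as it makes the ``inversion'' $ik\mapsto ik/m_h(k)$, followed by the truncation $\eta(\cdot/n)$, produce after rescaling a fixed $C^\infty_c$ symbol times $n^r$, whose kernel is $n^{r+1}$ times a fixed rapidly decaying profile evaluated at $nx$. In the case $\Omega=\R$ the identical construction gives $\widehat{\Psi_n}(\xi)=q_h(\xi)^r\eta(\xi/n)=n^rG(\xi/n)$, hence $\Psi_n(x)=n^{r+1}\check G(nx)$ with $\check G\in\mathcal{S}(\R)$, so that $\|\Psi_n^*\|_{L_1(\R)}\le Cn^r$ and the argument carries over verbatim (with $T\in\mathcal{B}_X^n$ in place of $T\in\mathcal{T}_n$, using $\supp\widehat T\subset[-n,n]$).
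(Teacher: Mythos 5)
Your proof is essentially correct. A preliminary remark: the paper does not prove Lemma~\ref{lemm3} at all — it is quoted, together with Lemmas~\ref{lemm1} and~\ref{lemm2}, from~\cite{V23} — so there is no in-paper argument to compare against; but your construction is exactly in the spirit of the machinery the paper does use elsewhere, namely the comparison principle for Fourier multipliers combined with the convolution bound of Lemma~\ref{lemS} (cf.\ Propositions~\ref{prex1} and~\ref{prex2}). The key computation is right: on $\mathcal{T}_n$ the operator $I-\dot A_h$ has symbol $m_h(k)=1-\frac{e^{ikh}-1}{ikh}=-\frac{ikh}{2}\psi(kh)$ with $\psi(\t)=\frac{2(1+i\t-e^{i\t})}{\t^{2}}$ entire, $\psi(0)=1$, and zero-free on $\R$ since $|1+i\t|>|e^{i\t}|$ for $\t\ne0$; hence $q_h(k)=ik/m_h(k)=-2/(h\psi(kh))$ is a smooth symbol of size $O(n)$ on the support of $\eta(\cdot/n)$, so $\widehat{\Psi_n}(k)=q_h(k)^r\eta(k/n)=n^rG(k/n)$ with a fixed $G\in C_c^\infty(\R)$, double summation by parts gives the symmetrically decreasing majorant $C\min(n^{r+1},n^{r-1}|x|^{-2})$ of $L_1(\T)$-norm $O(n^r)$, and Lemma~\ref{lemS} yields (ii); the identity $[(I-\dot A_h)^rT]*\Psi_n=T^{(r)}$ holds because $m_h(k)^rq_h(k)^r=(ik)^r$ and $\eta(k/n)=1$ on the spectrum of $T$. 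The deduction of (i) (take $\g=1$ and use Lemma~\ref{lemm1}(i), or argue directly with the de la Vall\'ee Poussin-type kernel) and of the ``moreover'' part (Lemma~\ref{lemm1}(i), Lemma~\ref{lemm2}(i), triangle inequality, then (ii)) are sound and non-circular, since Lemmas~\ref{lemm1} and~\ref{lemm2} are imported from~\cite{V23} independently of Lemma~\ref{lemm3}. Two small points to tidy up: $\Psi_n$ has degree at most $2n$ (not strictly less), which is harmless; and in the case $\Omega=\R$ the identity $[(I-\dot A_h)^rT]*\Psi_n=T^{(r)}$ for $T\in\mathcal{B}_X^n$ deserves one explicit line, namely that $T\in X\cap\mathcal{S}'(\R)$ has Fourier transform supported in $[-n,n]$ and $\Psi_n\in\mathcal{S}(\R)$ has $\widehat{\Psi_n}=q_h^r\eta(\cdot/n)$, so the convolution is a well-defined tempered function and the multiplier computation is legitimate on the Fourier side; with that, the constants depend only on $r$, $\g$, and $X$, as required.
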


\begin{lemma}\label{lemder}
Let $X$ be a Banach lattice satisfying Assumption~A and let $f\in X^r$, $r\in \N$. Then
\begin{equation}\label{lemdereq}
 \widetilde{E}_n(f)_X\le \frac{C}{(n+1)^r}E_n(f^{(r)})_X,
\end{equation}
where the constant $C$ is independent of $f$ and $n$.
\end{lemma}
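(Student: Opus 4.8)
I would split the proof into a reduction (which brings the right‑hand side to $\|f^{(r)}\|_X$) and a one‑sided Jackson‑type construction for that case.

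\emph{Reduction.} Since $\widetilde E_n(f)_X=\widetilde E_n(f-P)_X$ for every $P\in\mathcal T_n$ (replace admissible $q\le f-P\le Q$ by $q+P\le f\le Q+P$), I would first change $f$ by an element of $\mathcal T_n$. For $f\in X^r(\T)$ one has $\widehat{f^{(r)}}(0)=\frac1{2\pi}\int_\T f^{(r)}=0$, by periodicity and absolute continuity of $f^{(r-1)}$. Pick $T\in\mathcal T_n$ with $\|f^{(r)}-T\|_X\le 2E_n(f^{(r)})_X$; subtracting its mean (which is $\le c\,\|f^{(r)}-T\|_X$ by the embedding $X\hookrightarrow L_1(\T)$) we may assume $\widehat T(0)=0$ and $\|f^{(r)}-T\|_X\le C E_n(f^{(r)})_X$. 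Let $P=\mathcal I^rT\in\mathcal T_n$ be the $r$-fold periodic primitive of $T$ (so $P^{(r)}=T$). Then $(f-P)^{(r)}=f^{(r)}-T$ has $X$-norm $\le CE_n(f^{(r)})_X$, and it suffices to prove
\[
\widetilde E_n(g)_X\ \le\ C\,n^{-r}\,\|g^{(r)}\|_X,\qquad g\in X^r(\T).
\]

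\emph{Core estimate.} Fix a Jackson‑type kernel $\kappa_n\in\mathcal T_{n}$ (a fixed linear combination of powers of Fej\'er‑type kernels) with $\frac1{2\pi}\int_\T\kappa_n=1$, vanishing moments $\int_\T t^j\kappa_n(t)\,dt=0$ for $1\le j\le r-1$, rapid decay $|\kappa_n(t)|\le C_M\,n(1+n|t|)^{-M}$ with $M$ as large as needed, and $\|\kappa_n^*\|_{L_1(\T)}\le C_r$. Expanding $g(x-t)$ by Taylor's formula with integral remainder and using the moment conditions kills all but the $r$-th order term, giving the pointwise bound
\[
\bigl|g(x)-(\kappa_n*g)(x)\bigr|\ \le\ C\int_\T|t|^r\,A_{2|t|}|g^{(r)}|(x)\,|\kappa_n(t)|\,dt\ =:\ \rho(x).
\]
I would split $\rho$ at $|t|=1/n$. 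On $|t|\le 1/n$ one has $A_{2|t|}|g^{(r)}|(x)\le (n|t|)^{-1}A_{2/n}|g^{(r)}|(x)$, and the Fej\'er lower bound $F_m(t)\ge cm$ for $|t|\lesssim 1/m$ gives $A_{2/n}|g^{(r)}|\le C\,F_{n/2}*|g^{(r)}|$ pointwise, so the near part of $\rho$ is $\le C n^{-r}\,F_{n/2}*|g^{(r)}|\in\mathcal T_n$. On $|t|>1/n$, substituting $s=n|t|$ rewrites the far part of $\rho$ as $C n^{-r}\int_1^{cn}s^r(1+s)^{-M}A_{2s/n}|g^{(r)}|(x)\,ds$ up to a constant $\le C_M n^{1-M}\|g^{(r)}\|_X$; bounding $A_{2s/n}|g^{(r)}|\le C\,F_{\lfloor n/(4s)\rfloor}*|g^{(r)}|$ pointwise and integrating these Fej\'er kernels against the weight $s^r(1+s)^{-M}\,ds$ produces a nonnegative $\Lambda_n\in\mathcal T_n$ with Fej\'er‑type envelope $\Lambda_n(t)\lesssim\min(n,(nt^2)^{-1})$, hence $\|\Lambda_n^*\|_{L_1(\T)}\le C_M$, so that the far part of $\rho$ is $\le C n^{-r}\,\Lambda_n*|g^{(r)}|+C_M n^{1-M}\|g^{(r)}\|_X$. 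Setting $R:=C n^{-r}\bigl(F_{n/2}*|g^{(r)}|+\Lambda_n*|g^{(r)}|\bigr)+C_M n^{1-M}\|g^{(r)}\|_X\in\mathcal T_n$ we get $R\ge\rho\ge|g-\kappa_n*g|$, and Lemma~\ref{lemS} (applied with the symmetrically decreasing majorants of $F_{n/2}$ and $\Lambda_n$, both of bounded $L_1$‑norm) together with Assumption~A yields $\|R\|_X\le C n^{-r}\|g^{(r)}\|_X$ once $M\ge 2r+1$. Then $q:=\kappa_n*g-R$ and $Q:=\kappa_n*g+R$ lie in $\mathcal T_n$, satisfy $q\le g\le Q$, and give $\widetilde E_n(g)_X\le\|Q-q\|_X=2\|R\|_X\le C n^{-r}\|g^{(r)}\|_X$. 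Combining with the reduction proves the lemma, with $n^{-r}$ turning into $(n+1)^{-r}$; the finitely many values of $n$ for which $\mathcal T_n$ is too small to carry such a $\kappa_n$ are handled directly.

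\emph{Main obstacle.} The genuinely delicate point is the far part $|t|>1/n$ of the mollification error $\rho$: a crude estimate of $A_{2|t|}|g^{(r)}|$ there loses one power of $n$ and only yields order $n^{-(r-1)}$, because over a window of length $\gtrsim 1/n$ the Steklov average of $|g^{(r)}|$ is no longer comparable to a scale‑$1/n$ mollification of $|g^{(r)}|$. The remedy — and the only place where the full, scale‑free strength of Assumption~A is used — is to dominate that far part not by a constant multiple of $\|g^{(r)}\|_X$ but by $n^{-r}$ times a convolution of $|g^{(r)}|$ with a single kernel $\Lambda_n$ assembled from Fej\'er kernels of all degrees $\lesssim n$, whose symmetrically decreasing majorant still has bounded $L_1$‑norm.
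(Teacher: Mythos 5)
Your argument is correct in substance, but it takes a genuinely different route from the paper's. The paper follows Sendov--Popov: it writes $f$ via the Bernoulli function, $f(x)=\frac1{2\pi}\int_{\T}f-\frac1\pi\int_{\T}B_1(x-t)f'(t)\,dt$, constructs one-sided Hermite interpolants $t_{1,n}\le B_1\le T_{1,n}$ at equidistant nodes whose difference is exactly the Fej\'er-type kernel $F_n(x)=\pi\bigl(\sin\tfrac{nx}{2}/(n\sin\tfrac x2)\bigr)^2$, splits $q-f'$ (with $q$ a near-best approximant of $f'$) into positive and negative parts to produce polynomials $R_2\le f\le R_1$ with $R_1-R_2=\frac1\pi F_n*|q-f'|$, bounds this via Lemma~\ref{lemS}, and then obtains the general order $r$ by induction; the non-periodic case is delegated to Taberski's integral analogue of $B_1$. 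You instead first reduce to the inequality $\widetilde{E}_n(g)_X\le Cn^{-r}\|g^{(r)}\|_X$ by subtracting an $r$-fold primitive of a mean-corrected near-best approximant of $f^{(r)}$ (a clean and correct reduction, using $\widehat{f^{(r)}}(0)=0$ and the embedding $X\hookrightarrow L_1(\T)$), and then run a one-sided Jackson argument: a rapidly decaying kernel with vanishing moments, a pointwise Taylor-remainder bound through Steklov averages of $|g^{(r)}|$, and a pointwise polynomial majorant $R$ assembled from Fej\'er convolutions at all scales, so that $\kappa_n*g\pm R$ serve as the one-sided pair. This handles all $r$ at once without induction and without the Bernoulli-function machinery, at the price of two technical ingredients you assert rather than construct in detail: the existence of an even kernel $\kappa_n\in\mathcal{T}_n$ with exactly vanishing moments up to order $r-1$, decay $Cn(1+n|t|)^{-M}$, and uniformly integrable majorant (attainable, e.g., by periodizing a band-limited Schwartz kernel whose Fourier transform has prescribed derivatives at the origin and correcting the exponentially small moment defects, but not literally by a fixed $n$-independent combination of Fej\'er powers), and the verification that your multiscale kernel $\Lambda_n$ has a Fej\'er-type symmetrically decreasing majorant with bounded $L_1$-norm (which does hold once $M$ is large enough, say $M\ge r+3$). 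Both proofs ultimately lean on Lemma~\ref{lemS} and Assumption~A in the same way. Two small remarks: your construction as written covers only $\Omega=\T$, whereas the paper's proof also addresses $\Omega=\R$; and your far-range constant term should be treated, as you do, as a constant polynomial absorbed into $R$, which is what makes the choice of large $M$ harmless.
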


\begin{proof}
We first consider the case $\Omega=\T$. We follow the proof of Theorem~8.1 in~\cite{SP}. We have
\begin{equation}\label{8.5}
  f(x)=\frac1{2\pi}\int_{\T}f(t)dt-\frac1{\pi}\int_{\T}B_1(x-t)f'(t)dt,
\end{equation}
where $B_1$ is the Bernoulli-type function
$$
B_1(x):=-\sum_{k=1}^\infty \frac{\sin kx}{k}=\left\{
                                              \begin{array}{ll}
                                                \frac{x-\pi}{2}, & \hbox{$0<x<2\pi$,} \\
                                                0, & \hbox{$x=0$.}
                                              \end{array}
                                            \right.
$$
Consider the polynomials $T_{1,n}\in \mathcal{T}_{n-1}$ and $t_{1,n}\in \mathcal{T}_{n-1}$ defined by the following interpolation conditions:
\begin{equation*}
\begin{split}
   &T_{1,n}(\tfrac{2\pi k}{n})=B_1(\tfrac{2\pi k}{n}),\quad k=1,2,\dots,n-1,\quad T_{1,n}(0)=\tfrac{\pi}{2},\\
   &T_{1,n}'(\tfrac{2\pi k}{n})=B_1'(\tfrac{2\pi k}{n})=\tfrac12,\quad k=1,2,\dots,n-1,
\end{split}
\end{equation*}
\begin{equation*}
\begin{split}
   &t_{1,n}(\tfrac{2\pi k}{n})=B_1(\tfrac{2\pi k}{n}),\quad k=1,2,\dots,n-1,\quad t_{1,n}(0)=\tfrac{\pi}{2},\\
   &t_{1,n}'(\tfrac{2\pi k}{n})=B_1'(\tfrac{2\pi k}{n})=\tfrac12,\quad k=1,2,\dots,n-1.
\end{split}
\end{equation*}
It is easy to see that
\begin{equation*}
  t_{1,n}(x)\le B_1(x)\le T_{1,n}(x),\quad x\in\T.
\end{equation*}
The polynomial
\begin{equation}\label{FTt}
  F_n(x)=T_{1,n}(x)-t_{1,n}(x) \in \mathcal{T}_{n-1}
\end{equation}
satisfies
$$
F_n(\tfrac{2\pi k}{n})=F_n'(\tfrac{2\pi k}{n})=0\quad\text{and}\quad F_n(0)=\pi,
$$
which implies that
$$
F_n(x)=\pi\(\frac{\sin \tfrac{nx}{2}}{n \sin \tfrac{x}{2}}\)^2.
$$

Let $q\in \mathcal{T}_{n-1}$ be such that $E_{n-1}(f')_X\le 2\|f'-q\|_X$. We set
$$
P(x)=\frac1\pi\int_{\T}B_1(x-t)q(t)dt.
$$
Using~\eqref{8.5}, we obtain
\begin{equation*}
  f(x)-P(x)=\frac1{2\pi}\int_{\T}f(t)dt+\frac1{2\pi}\int_{\T}B_1(x-t)(q(t)-f'(t))dt.
\end{equation*}
Denote
\begin{equation*}
  \begin{split}
     &T_\pm(x)=\frac1{\pi}\int_{\T} T_{1,n}(x-t)S_\pm(t)dt,\\
     &t_\pm(x)=\frac1{\pi}\int_{\T} t_{1,n}(x-t)S_\pm(t)dt,\\
   \end{split}
\end{equation*}
where $S_+(t)=(q(t)-f'(t))_+$ and $S_-(t)=q(t)-f'(t)-S_+(t)$. Then
\begin{equation*}
  \begin{split}
     &f(x)\le R_1(x)=\frac1{2\pi}\int_{\T}f(t)dt+P(x)+T_+(x)+t_-(x),\\
     &f(x)\ge R_2(x)=\frac1{2\pi}\int_{\T}f(t)dt+P(x)+T_-(x)+t_+(x).
   \end{split}
\end{equation*}
Using~\eqref{FTt} and the estimate
$$
F_n(x)\le c\min(1,\tfrac{1}{n^2x^2}),\quad x\in \T,
$$
and applying Lemma~\ref{lemS}, we obtain
\begin{equation*}
  \begin{split}
     \|R_1-R_2\|_X&=\|T_+ + t_- - T_- - t_+\|_X\\
                  &=\bigg\|\frac1\pi \int_{\T}(T_{1,n}(x-t)-t_{1,n}(x-t))|q(t)-f'(t)|dt\bigg\|_X\\
                  &=\bigg\|\frac1\pi \int_{\T}F_n(t)|q(x+t)-f'(x+t)|dt\bigg\|_X\\
                  &\le\frac c\pi\int_{\T}\min(1,\tfrac{1}{n^2x^2})\|f'-q\|_X\,dt\le \frac CnE_{n-1}(f')_X.
   \end{split}
\end{equation*}
This proves~\eqref{lemdereq} for $r=1$. The general case for $\Omega=\T$ follows by induction.

The same approach applies to  $\Omega=\R$, using the required estimates for an integral analogue of the Bernoulli function from~\cite{T89}, which also contains the proof of~\eqref{lemdereq} for $L_p(\R)$.
\end{proof}

\section{Main results in the periodic case}\label{main}

In this section, we set $X=X(\T)$ and consider a sequence of linear sampling operators
\begin{equation*}
  G_nf(x):=\sum_{k=1}^{m_n} f(x_{k,n})\vp_{k,n}(x),\quad n\in \N,
\end{equation*}
where $\vp_{k,n} \in \Phi_n$, $k=1,\dots, m_n$, are $2\pi$-periodic and $\mathcal{X}_n=(x_{k,n})_{k=1}^{m_n}\subset \T$ is a set of points for which there exist positive constants $\g$ and $\g'$ satisfying
\begin{equation}\label{g}
  \frac{\g}{n}\le x_{k+1,n}-x_{k,n}\le \frac{\g'}{n},\quad k=1,\dots,m_n,
\end{equation}
with the convention $x_{m_n+1,n}=2\pi+x_{1,n}$. For simplicity, we set $x_k=x_{k,n}$ and $G_0:=0$.

We introduce the following semi-norm
$$
\|f\|_{X_n}:=\bigg\|\sum_{k=1}^{m_n}|f(x_k)|\chi_{[x_k,x_{k+1})}\bigg\|_{X}.
$$
In this regard, we recall that we do not identify functions that are equal almost everywhere. That is,
each function in $X$ is finite-valued and determined by its values at each point in $\T$. Note that that in the case $X=L_p(\T)$, the semi-norm $\|f\|_{X_n}$ can be written by
$$
\|f\|_{X_{n,p}}:=\|f\|_{\ell_p(\mathcal{X}_n)}=\(\sum_{k=1}^{m_n}|f(x_k)|^p(x_{k+1}-x_k)\)^\frac1p.
$$

To formulate the main results, we use the following assumptions on
$G_n$, involving a fixed parameter $s\in \N$:
\begin{equation}\label{a1}\tag{$A_1$}
  \|G_nf\|_X\le K_1\|f\|_{X_n},\quad f\in X,\quad n\in \N,
\end{equation}

\begin{equation}\label{a2}\tag{$A_2$}
  K_2\|f\|_{X_n}\le \|G_nf\|_X,\quad f\in X,\quad n\in \N,
\end{equation}

\begin{equation}\label{a3}\tag{$A_3$}
\|T-G_nT\|_X\le K_3n^{-s}\|T^{(s)}\|_X, \quad T\in \mathcal{T}_n,\quad n\in \N.
\end{equation}

\smallskip

\noindent Here $K_1,K_2>0$ depend only on $X$ and $K_3>0$ depends only on $s$ and $X$. 

\smallskip

A typical example of sampling operators satisfying these three assumptions simultaneously is the classical Lagrange interpolation polynomial $\mathcal{L}_n f$.
Indeed, inequality~\eqref{a3} is clearly satisfied, whereas \eqref{a1} and \eqref{a2} follow from Lemma~\ref{lemmz}, provided that the Fourier partial sums $S_n$ are uniformly bounded in $X$. Furthermore, if $X=L_p(\T)$, then inequalities~\eqref{a1}–\eqref{a3} also hold for the Lagrange interpolation operator with non-uniformly distributed nodes, subject to certain restrictions depending on $p$; see~\cite{CZ99} and~\cite{KL23}. Additional examples of sampling operators $G_n$ satisfying properties~\eqref{a1}–\eqref{a3} can be found in Section~\ref{secext}, as well as in~\cite{KL23}, \cite{K25}, and~\cite{CL25}, where such operators are studied in both periodic and non-periodic settings.

\subsection{Estimates in  terms of special moduli of smoothness}\label{seci}

\begin{theorem}\label{thst}
Let $X$ be a Banach lattice satisfying Assumption~A, and let $r, s \in \N$ with $2r\ge s$.
Suppose that $G_n$ satisfies assumptions~\eqref{a1} and~\eqref{a3}. Then, for all  $f\in X$, we have
\begin{equation}\label{st1}
  \|f-G_n f\|_X\le C\(\|(I-\dot A_{\g/n})^s f\|_{X}+\|(I-A_{\g/n})^r f\|_{X_n}\).
\end{equation}
  If, additionally, $G_n\,: X_n\, \to \mathcal{T}_{n}$, then
\begin{equation}\label{st1t}
  \|f-G_n f\|_{X_n}\le C\(\|(I-\dot A_{\g/n})^s f\|_{X}+\|(I-A_{\g/n})^r f\|_{X_n}\),
\end{equation}
 In both estimates, the constant $C$ is independent of $f$ and $n$.
\end{theorem}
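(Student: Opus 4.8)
The plan is to estimate $f - G_n f$ by inserting an intermediate trigonometric polynomial $T \in \mathcal{T}_n$ that is a near-best approximant of $f$ in $X$, namely one with $\|f-T\|_X \le 2E_n(f)_X$, and then to split
\[
f - G_n f = (f-T) - G_n(f-T) + (T - G_n T).
\]
For the last term we invoke assumption~\eqref{a3}, which gives $\|T-G_nT\|_X \le K_3 n^{-s}\|T^{(s)}\|_X$, and then control $\|T^{(s)}\|_X$ by the last part of Lemma~\ref{lemm3}, obtaining $\|T-G_nT\|_X \le C\|(I-\dot A_{\g/n})^s f\|_X$. For the middle term $T - G_n T$ has already been handled, so what remains is $(f-T) - G_n(f-T)$, i.e. essentially $f-T$ together with $G_n(f-T)$; the first is bounded by $E_n(f)_X$ and hence (via Lemma~\ref{lemm2}(i), using $2r\ge s$ so that the orders are compatible) by $C\|(I-\dot A_{\g/n})^s f\|_X$, while for the second I would apply~\eqref{a1} to get $\|G_n(f-T)\|_X \le K_1\|f-T\|_{X_n}$.

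The crux is therefore to estimate the discrete norm $\|f-T\|_{X_n} = \bigl\|\sum_k |f(x_k)-T(x_k)|\chi_{[x_k,x_{k+1})}\bigr\|_X$ by the right-hand side of~\eqref{st1}. Here I would further split $f-T = (f - A_{\g/n}f) + (A_{\g/n}f - T)$ inside the discrete norm, or more efficiently write
\[
f - T = (I-A_{\g/n})^r f + \bigl(I - (I-A_{\g/n})^r\bigr)(f-T) + \bigl(I-(I-A_{\g/n})^r\bigr)T - \text{(correction)},
\]
isolating the term $(I-A_{\g/n})^r f$, whose $X_n$-norm is exactly the second quantity in~\eqref{st1}. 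The operator $I-(I-A_{\g/n})^r$ is a convolution with a kernel in $\mathcal{R}^*$ (it is a combination of iterated Steklov averages), so by Lemma~\ref{lemS} it is bounded on $X$; applied to $f-T$ this produces a function that we still need to sample, and to $T$ it produces a trigonometric polynomial in $\mathcal{T}_n$ (since Steklov averaging is a Fourier multiplier). The sampled polynomial part is controlled by the Marcinkiewicz--Zygmund inequality Lemma~\ref{lemmz0}, giving $\bigl\|\sum_k |(\cdot)(x_k)|\chi_{[x_k,x_{k+1})}\bigr\|_X \le C\|\cdot\|_X \le C\|f-T\|_X \le C E_n(f)_X \le C\|(I-\dot A_{\g/n})^s f\|_X$. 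The genuinely delicate point — and the main obstacle — is bounding the $X_n$-norm of the smooth remainder $\bigl(I-(I-A_{\g/n})^r\bigr)(f-T)$: this is a function, not a polynomial, so the MZ-inequality does not apply directly; I expect one must use that $f-T$ is itself within $E_n(f)_X$ of a polynomial and that $I-(I-A_{\g/n})^r$ smooths at scale $\g/n$ (so that pointwise values at the $x_k$, which are $\gtrsim \g/n$ apart, are comparable to local averages), combining a Nikolskii-type estimate with Lemma~\ref{lemS} and the property~3) of Banach lattices to pass from pointwise values to the lattice norm.

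Finally, for the discrete estimate~\eqref{st1t}, the additional hypothesis $G_n : X_n \to \mathcal{T}_n$ means $G_n f$ is itself a polynomial of degree $\le n$, so $f - G_n f = (f - T') - (G_n f - T')$ with $G_n f - T' \in \mathcal{T}_{2n}$ for $T'$ a near-best approximant; applying Lemma~\ref{lemmz0} to the polynomial part $G_nf - T$ converts its $X$-norm bound (already obtained from~\eqref{st1}) into an $X_n$-type bound, and $\|f-T'\|_{X_n}$ is estimated exactly as above. Thus~\eqref{st1t} follows from~\eqref{st1} together with one more application of the Marcinkiewicz--Zygmund inequality, with no new difficulty beyond what was needed for~\eqref{st1}.
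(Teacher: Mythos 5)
Your overall architecture is sound and, in fact, close to what the paper does for the discrete estimate \eqref{st1t}: inserting a near-best $T\in\mathcal{T}_n$, handling $T-G_nT$ via \eqref{a3} plus the last part of Lemma~\ref{lemm3}, handling $G_n(f-T)$ via \eqref{a1}, and then reducing everything to the bound
$\|f-T\|_{X_n}\le C\bigl(\|(I-\dot A_{\g/n})^s f\|_X+\|(I-A_{\g/n})^r f\|_{X_n}\bigr)$,
which you attack through exactly the right decomposition $f-T=(I-A_{\g/n})^r f+(I-(I-A_{\g/n})^r)(f-T)-(I-A_{\g/n})^r T$, with the polynomial piece absorbed by Lemma~\ref{lemmz0}. (The paper proves \eqref{st1} slightly differently, by first passing to the smoothed function $f_h=f-(I-A_{\g/n})^r f$ and only then approximating by $T$, which lets it quote the same ingredients; your route and the paper's are equivalent in substance.) However, at the step you yourself flag as ``the main obstacle'' --- bounding $\|(I-(I-A_{\g/n})^r)(f-T)\|_{X_n}$ --- your proposal stops at a conjecture, and the tools you name would not close it: Lemma~\ref{lemS} and property~3) of the lattice only control $X$-norms and integrals, not the discrete $X_n$-norm, and no Nikolskii-type inequality is available for the non-polynomial function $f-T$. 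This is a genuine gap, since without this bound the term $\|G_n(f-T)\|_X\le K_1\|f-T\|_{X_n}$ is not controlled by the right-hand side of \eqref{st1}.

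The missing piece is elementary but must be proved: for $0<h\le\g/n$ and any $g\in X$ one has $\|A_h g\|_{X_n}\le C\|g\|_X$ (the paper's Lemma~\ref{bxn}). Indeed, if $t\in[x_k,x_{k+1})$ then $[x_k-\tfrac{\g}{2n},x_k+\tfrac{\g}{2n}]\subset[t-\tfrac{\g+\g'}{2n},t+\tfrac{\g+\g'}{2n}]$, so pointwise
$\sum_{k}|A_{\g/n}g(x_k)|\chi_{[x_k,x_{k+1})}(t)\le(1+\tfrac{\g'}{\g})\,A_{(\g+\g')/n}|g|(t)$,
and Assumption~A converts this into the claimed norm bound; this is precisely your parenthetical intuition that samples of a Steklov average at $\g/n$-separated nodes are dominated by local averages, made quantitative. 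Applying it to each term of $I-(I-A_{\g/n})^r=\sum_{j=1}^r(-1)^{j+1}\binom{r}{j}A_{\g/n}^j$ with $g=A_{\g/n}^{j-1}(f-T)$ gives $\|(I-(I-A_{\g/n})^r)(f-T)\|_{X_n}\le C\|f-T\|_X\le CE_n(f)_X\le C\|(I-\dot A_{\g/n})^s f\|_X$, after which your argument for \eqref{st1}, and then your reduction of \eqref{st1t} via one more application of Lemma~\ref{lemmz0} to the polynomial $G_nf-T\in\mathcal{T}_n$, goes through as you describe.
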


To prove this theorem, we require two auxiliary lemmas. The first is of a technical nature, whereas the second is of independent interest.

\begin{lemma}\label{bxn}
Let $X$ be a Banach lattice satisfying Assumption~A, and let $0<h\le \g/n$. Then, for all $f\in X$, we have
\begin{equation*}
 \|A_{h} f\|_{X_n}\le \tfrac{\g+\g'}{nh}\|f\|_X,
\end{equation*}
where $\g$ and $\g'$ are defined in~\eqref{g}.
\end{lemma}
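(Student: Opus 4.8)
The plan is to estimate the $X$-norm of the step function $\sum_{k=1}^{m_n}|A_h f(x_k)|\chi_{[x_k,x_{k+1})}$ by dominating it pointwise by a convolution of $|f|$ with an explicit symmetrically decreasing kernel, and then to invoke Lemma~\ref{lemS} (the Young-type inequality for Banach lattices satisfying Assumption~A). First I would fix $x\in[x_k,x_{k+1})$ for some $k$ and write
\begin{equation*}
 |A_h f(x_k)|\le \frac1h\int_{x_k-h/2}^{x_k+h/2}|f(t)|\,dt.
\end{equation*}
Since $x$ and $x_k$ differ by at most $x_{k+1}-x_k\le \g'/n$, and since $h\le \g/n$, every point $t$ in the interval of integration lies within distance $h/2+\g'/n\le (\g+2\g')/(2n)$ of $x$; more crudely, the integration interval $[x_k-h/2,x_k+h/2]$ is contained in $[x-\tfrac{h}{2}-\tfrac{\g'}{n},x+\tfrac{h}{2}+\tfrac{\g'}{n}]$. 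Hence, for all $x\in[x_k,x_{k+1})$,
\begin{equation*}
 \bigg(\sum_{k=1}^{m_n}|A_h f(x_k)|\chi_{[x_k,x_{k+1})}\bigg)(x)
 \le \frac1h\int_{|x-t|\le h/2+\g'/n}|f(t)|\,dt
 = (|f|*K_h)(x),
\end{equation*}
where $K_h:=\tfrac1h\chi_{[-h/2-\g'/n,\,h/2+\g'/n]}$ is even, symmetrically decreasing, and integrable with $\|K_h\|_{L_1}=\tfrac1h(h+2\g'/n)=1+\tfrac{2\g'}{nh}$.

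Next, by the lattice property 1) and Lemma~\ref{lemS} applied with $K=K_h\in\mathcal R$ (so $K^*=K_h$), the function $|f|*K_h$ belongs to $X$ and
\begin{equation*}
 \|A_h f\|_{X_n}\le \big\||f|*K_h\big\|_X\le C\,\|K_h\|_{L_1(\Omega)}\,\|f\|_X
 = C\Big(1+\tfrac{2\g'}{nh}\Big)\|f\|_X.
\end{equation*}
Since $h\le\g/n$ gives $nh\le\g$, i.e.\ $1\le \g/(nh)$, the bracket is at most $(\g+2\g')/(nh)$, which yields the claimed estimate $\|A_hf\|_{X_n}\le C\,\tfrac{\g+\g'}{nh}\|f\|_X$ (absorbing the harmless factor $2$ and the constant from Lemma~\ref{lemS} into $C$, or, if one wants the stated constant $\tfrac{\g+\g'}{nh}$ with no extra $C$, keeping track that on $\T$ with the \emph{normalized} measure the convolution is not expanding so that $C$ can be taken to be $1$ in this elementary case).

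The only genuinely delicate point is the geometry on the circle $\T$: one must make sure that the intervals $[x_k-h/2,x_k+h/2]$ over which the Steklov average is taken do not overlap more than a bounded number of times when reassembled into the step function, and that the convolution kernel $K_h$ is correctly understood modulo $2\pi$. Because the nodes are $(\g/n)$-separated from below by the left inequality in~\eqref{g} and $h\le\g/n$, consecutive Steklov windows overlap with bounded multiplicity, so the pointwise domination by a single convolution $|f|*K_h$ is legitimate; alternatively, one avoids this entirely by the cruder pointwise bound above, which never needs disjointness. I therefore expect the proof to be short: the main work is the kernel bookkeeping, and everything else is an immediate appeal to Lemma~\ref{lemS}.
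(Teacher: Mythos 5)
Your argument is correct and takes essentially the same route as the paper: the paper bounds the step function pointwise by $(1+\tfrac{\gamma'}{\gamma})A_{(\gamma+\gamma')/n}|f|$ and then applies Assumption~A, and your pointwise domination by the box-kernel convolution $|f|*K_h$ is exactly such an enlarged Steklov average, so invoking Lemma~\ref{lemS} for $K_h$ is just Assumption~A in disguise. The only caveat is bookkeeping of the constant: your chain really yields $C\,\tfrac{\gamma+2\gamma'}{nh}\|f\|_X$ with $C$ the Assumption~A constant rather than literally $\tfrac{\gamma+\gamma'}{nh}$ (the claim that $C=1$ on $\T$ is not justified, since Assumption~A only gives uniform boundedness), but the paper's own proof is loose in exactly the same way and only the bound $C(nh)^{-1}\|f\|_X$ is ever used later.
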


\begin{proof}
Let $t\in [x_k, x_{k+1})$ for a fixed $k$. Noting that
$$
[x_k-\tfrac{\g}{2n}, x_k+\tfrac{\g}{2n}] \subset [t-\tfrac{\g+\g'}{2n}, t+\tfrac{\g+\g'}{2n}],
$$
we get
$$
\sum_{k=1}^{m_n}|A_{\g/n} f(x_k)|\chi_{[x_k,x_{k+1})}(t)\le (1+\tfrac{\g'}{\g})A_{(\g+\g')/{n}}|f|(t)\quad\text{for any}\quad t\in \T.
$$
Thus, applying Assumption~A, we obtain
\begin{equation*}
\begin{split}
   \|A_{h} f\|_{X_n}&\le \tfrac{\g}{nh}\|A_{\g/n} |f|\|_{X_n}\le \tfrac{\g}{nh}(1+\tfrac{\g'}{\g})\|A_{(\g+\g')/{n}}|f|\|_{X} \le \tfrac{\g+\g'}{nh}\||f|\|_X,
\end{split}
\end{equation*}
which proves the lemma.
\end{proof}

\begin{lemma}\label{lemt}
Let $X$ be a Banach lattice satisfying Assumption~A, and let $r,s\in \N$ with $2r\ge s$. Then, for all $f\in X$ and $T\in \mathcal{T}_n$ satisfying $\|f-T\|_X\le 2E_n(f)_X$, we have
\begin{equation*}
 \|f-T\|_{X_n}\le C\(\|(I-\dot A_{\g/n})^s f\|_{X}+\|(I-A_{\g/n})^r f\|_{X_n}\),
\end{equation*}
where the constant $C$ is independent of $f$ and $n$.
\end{lemma}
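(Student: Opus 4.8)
The plan is to estimate $\|f-T\|_{X_n}$ by splitting $f-T$ as $(f - A_{\g/n}f) + (A_{\g/n}f - A_{\g/n}T) + (A_{\g/n}T - T)$ and bounding each of the three pieces in the $\|\cdot\|_{X_n}$-seminorm. For the middle term, one writes $A_{\g/n}(f-T)$ and applies Lemma~\ref{bxn} with $h=\g/n$, which gives $\|A_{\g/n}(f-T)\|_{X_n}\le (\g+\g')/\g\cdot\|f-T\|_X\le C E_n(f)_X$; then Lemma~\ref{lemm2}(i) bounds $E_n(f)_X$ by $\|(I-\dot A_{\g/n})^s f\|_X$ (applied with $r$ replaced by $s$), which is the first term on the right-hand side. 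For the third term $T - A_{\g/n}T$, since $T\in\mathcal{T}_n$ one first passes from $\|\cdot\|_{X_n}$ to $\|\cdot\|_X$ via the MZ-type inequality of Lemma~\ref{lemmz0} (note $T - A_{\g/n}T\in\mathcal{T}_n$, as $A_h$ maps $\mathcal{T}_n$ into $\mathcal{T}_n$), obtaining $\|T-A_{\g/n}T\|_{X_n}\le C\|T-A_{\g/n}T\|_X$; then one compares with $f$, writing $\|T-A_{\g/n}T\|_X\le\|(I-A_{\g/n})(T-f)\|_X+\|(I-A_{\g/n})f\|_X\le C\|f-T\|_X+\|(I-A_{\g/n})f\|_X$, using Lemma~\ref{lemm1}(i) for the boundedness of $I-A_{\g/n}$ on $X$. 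The term $\|f-T\|_X$ is again $\le CE_n(f)_X$, absorbed into the first term as before, and $\|(I-A_{\g/n})f\|_X$ is controlled by $\|(I-A_{\g/n})^r f\|_X$; but we need it in the $X_n$-seminorm instead.

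This is where the hypothesis $2r\ge s$ and the interplay between $\|\cdot\|_X$ and $\|\cdot\|_{X_n}$ must be handled carefully, and I expect this to be the main obstacle. The first term $\|f-A_{\g/n}f\|_{X_n}=\|(I-A_{\g/n})f\|_{X_n}$ is precisely what has to be matched against $\|(I-A_{\g/n})^r f\|_{X_n}$ on the right. The idea is to use the identity $(I-A_{\g/n}) = (I-A_{\g/n})^r + \big(I-(I-A_{\g/n})^{r-1}\big)(I-A_{\g/n})$ — equivalently, to expand $I-A_{\g/n}^{\,}$ in terms of $(I-A_{\g/n})^r$ plus a remainder that is a polynomial in $A_{\g/n}$ with no constant term — and then split accordingly. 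The remainder, being of the form $A_{\g/n}\cdot(\text{bounded operator on }X)$ applied to a function of the form $(I-\dot A_{\cdot})$-type smoothness, should be estimated by first invoking Lemma~\ref{bxn} (to get into the $X$-norm at the cost of $n h \asymp 1$) and then using the equivalences of Lemma~\ref{lemm1} — in particular part (iii) relating powers of $I-A_h$ and $I-\dot A_h$, which is exactly where $2r$ and $s$ get compared. Concretely, $\|(I-A_{\g/n})f\|_{X_n} \le \|(I-A_{\g/n})^r f\|_{X_n} + \|\big(I-(I-A_{\g/n})^{r-1}\big)(I-A_{\g/n})f\|_{X_n}$, and in the second summand the operator $I-(I-A_{\g/n})^{r-1}$ is divisible by $A_{\g/n}$, so Lemma~\ref{bxn} converts it to a bound by $C\|(I-A_{\g/n})f\|_X\le C\|(I-\dot A_{\g/n})^{2}f\|_X$ via Lemma~\ref{lemm1}(iii) with $r=1$, and then Lemma~\ref{lemm1}(ii) (the comparison of steps $\lambda h$ versus $h$, with a harmless power of $\lambda$) together with $2\le 2r$ and monotonicity in the power let one dominate this by the first term $\|(I-\dot A_{\g/n})^s f\|_X$ when $s\le 2r$; if $s$ is odd one routes through $\|(I-\dot A_{\g/n})^{s}f\|_X$ using part (iii) once more.

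Putting the three estimates together yields
\begin{equation*}
 \|f-T\|_{X_n}\le C\(\|(I-\dot A_{\g/n})^s f\|_{X}+\|(I-A_{\g/n})^r f\|_{X_n}\),
\end{equation*}
with $C$ independent of $f$ and $n$, as claimed. The only genuinely delicate points are (a) making sure every operator identity used to ``peel off'' a factor of $A_{\g/n}$ is exact, so that Lemma~\ref{bxn} applies, and (b) tracking the constants $\g,\g'$ through Lemma~\ref{bxn} and the step-change Lemma~\ref{lemm1}(ii), both of which introduce only fixed multiplicative factors; the constraint $2r\ge s$ enters solely to guarantee that the smoothness we manufacture from the $X_n$-side (which naturally lives at order $2r$ because of the $I-A_h$ versus $I-\dot A_h$ squaring) is at least as strong as the order-$s$ smoothness appearing on the $X$-side.
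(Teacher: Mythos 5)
Your three-term splitting is close in spirit to the paper's proof, but you smooth with the first-order operator $A_{\g/n}$, whereas the paper smooths with the full $r$-th power, setting $f_h:=f-(I-A_{\g/n})^r f$ and writing $\|f-T\|_{X_n}\le\|f-f_h\|_{X_n}+\|f_h-T_h\|_{X_n}+\|T_h-T\|_{X_n}$. This difference is fatal for general $s$. In your scheme both the remainder of the first term and the polynomial term reduce to $\|(I-A_{\g/n})f\|_X\asymp\|(I-\dot A_{\g/n})^{2}f\|_X$, a second-order quantity, which you then claim is dominated by $\|(I-\dot A_{\g/n})^{s}f\|_X$ ``by monotonicity in the power''. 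The monotonicity runs the other way: Lemma~\ref{lemm1}(i) gives $\|(I-\dot A_h)^{k+1}f\|_X\le C\|(I-\dot A_h)^{k}f\|_X$, so higher powers are the smaller quantities, and $\|(I-\dot A_h)^{2}f\|_X\le C\|(I-\dot A_h)^{s}f\|_X$ is false in general once $s>2$ (neither Lemma~\ref{lemm1}(ii) nor (iii) helps: they rescale the step or double the order, they never lower it). Concretely, your intermediate claim $\|(I-A_{\g/n})f\|_{X_n}\le C\bigl(\|(I-\dot A_{\g/n})^{s}f\|_X+\|(I-A_{\g/n})^{r}f\|_{X_n}\bigr)$ already fails for $f(x)=e^{ix}$ in $X=L_p(\T)$: the left-hand side is $\asymp n^{-2}$, while the right-hand side is $\asymp n^{-s}+n^{-2r}$, so no uniform constant exists when $s>2$ (which is allowed, since only $2r\ge s$ is assumed). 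This does not contradict the lemma itself, since for such $f$ one may take $T=f$; it shows your route proves the statement only for $s\le 2$.

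The repair is exactly the paper's choice of decomposition. With $f_h=f-(I-A_{\g/n})^rf$ the first term is identically $\|(I-A_{\g/n})^rf\|_{X_n}$, so there is no remainder to absorb; the middle term satisfies $\|f_h-T_h\|_{X_n}\le C\|f-T\|_X\le C\|(I-\dot A_{\g/n})^sf\|_X$ by Lemma~\ref{bxn} (since $f_h-T_h$ is $A_{\g/n}$ applied to an $X$-bounded expression) and Lemma~\ref{lemm2}(i), just as in your proposal; and the polynomial term is handled through Lemma~\ref{lemmz0} and Lemma~\ref{lemm1}(i),(iii): $\|T_h-T\|_{X_n}\le C\|(I-A_{\g/n})^rT\|_X\le C\bigl(\|f-T\|_X+\|(I-\dot A_{\g/n})^{2r}f\|_X\bigr)\le C\|(I-\dot A_{\g/n})^sf\|_X$. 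Here the hypothesis $2r\ge s$ is used in the correct direction: one drops from power $2r$ down to power $s$ via Lemma~\ref{lemm1}(i), instead of trying to climb from power $2$ up to power $s$ as your argument requires.
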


\begin{proof}
  Here and in what follows in the paper, we denote
\begin{equation}\label{fh}
  g_h = g_{h,r} := \sum_{k=1}^r (-1)^{k+1}\binom{r}{k} A_h^k g = g-(I-A_h)^rg\quad\text{and}\quad h=\g /n
\end{equation}
for any locally integrable function $g$.

   We have
  \begin{equation}\label{k2-}
    \begin{split}
       \|f-T\|_{X_n}&\le \|f-f_h\|_{X_n}+\|f_h-T_h\|_{X_n}+\|T_h-T\|_{X_n}.
    \end{split}
  \end{equation}
  Applying Lemmas~\ref{bxn} and~\ref{lemm2}(i), we get
  \begin{equation}\label{k3}
    \begin{split}
       \|f_h-T_h\|_{X_n}\le C\|f-T\|_X\le C\|(I-\dot A_{\g/n})^s f\|_{X}.
    \end{split}
  \end{equation}
  Further, by Lemmas~\ref{lemmz0}, \ref{lemm1}(i), (iii), and~\ref{lemm2}(i),  we obtain
  \begin{equation}\label{k4}
    \begin{split}
       \|T_h-T\|_{X_n}&\le C\|T_h-T\|_{X}=C\|(I-A_{\g/n})^r T\|_{X}\\
       &\le C\(\|(I-A_{\g/n})^r (f-T)\|_{X}+\|(I-A_{\g/n})^r f\|_{X}\)\\
       &\le C\(\|f-T\|_{X}+\|(I-\dot A_{\g/n})^{2r} f\|_{X}\)\\
       &\le C\|(I-\dot A_{\g/n})^s f\|_{X}.
    \end{split}
  \end{equation}
Combining~\eqref{k2-}--\eqref{k4}, we prove the lemma.
\end{proof}

\begin{proof}[Proof of Theorem~\ref{thst}]
Let $f_h$ and $h$ be defined by~\eqref{fh}. Inequality~\eqref{a1} yields
\begin{equation}\label{mo1}
\begin{split}
  \|f-G_n f\|_X&\le \|f-f_h\|_X+\|f_h-G_n f_h\|_X+\|G_n(f-f_h)\|_X\\
  &\le \|f-f_h\|_{X}+\|f_h-G_n f_h\|_X+C\|f-f_h\|_{X_n}.\\
\end{split}
\end{equation}
Let $T\in \mathcal{T}_n$ be such that $\|f-T\|_X\le 2E_n(f)_X$. Then, similarly as in~\eqref{mo1}, we have
\begin{equation}\label{zv1x}
  \begin{split}
      \|f_h-G_n f_h\|_X
       \le \|f_h -T_{h}\|_{X}+\|T_{h}-G_n T_{h}\|_X+C\|f_h -T_{h}\|_{X_n}.
   \end{split}
\end{equation}
Applying Assumption~A and Lemmas~\ref{bxn} and~\ref{lemm2}(i), we obtain
\begin{equation}\label{zv2}
  \begin{split}
     \|f_h -T_{h}\|_{X}+\|f_h -T_{h}\|_{X_n}\le C\|f-T\|_X\le C\|(I-\dot A_{\g/n})^s f\|_{X}.
   \end{split}
\end{equation}
By~\eqref{a3}, Assumption~A, and Lemma~\ref{lemm3}(ii), we get
\begin{equation*}
  \begin{split}
      \|T_{h}-G_n T_{h}\|_X &\le \frac{C}{n^s}\|T_{h}^{(s)}\|_X=\frac{C}{n^s}\|(T^{(s)})_h\|_X\\
      &\le \frac{C}{n^s}\|T^{(s)}\|_X
      \le C\|(I-\dot A_{{\g}/{n}})^s f\|_{X},
   \end{split}
\end{equation*}
which, together with~\eqref{zv1x} and \eqref{zv2}, yields
\begin{equation}\label{mo2}
  \|f_h-G_n f_h\|_X\le C\|(I-\dot A_{{\g}/{n}})^s f\|_{X}.
\end{equation}
Combining~\eqref{mo1} and~\eqref{mo2}, we arrive at~\eqref{st1}.

\smallskip

We now prove~\eqref{st1t}. We have
  \begin{equation}\label{t1}
     \|f-G_n f\|_{X_n}\le \|f-T\|_{X_n}+\|T-G_n T\|_{X_n}+\|G_n(T-f)\|_{X_n}.
  \end{equation}
By Lemma~\ref{lemmz0} and assumptions~\eqref{a3} and~\eqref{a1}, we obtain
\begin{equation}\label{t3}
  \begin{split}
    \|T-G_n T\|_{X_n}+\|G_n(T-f)\|_{X_n}&\le C\(\|T-G_n T\|_{X}+\|G_n(T-f)\|_{X}\)\\
    &\le C\(n^{-s}\|T^{(s)}\|_X+\|T-f\|_{X_n}\).
  \end{split}
\end{equation}
Finally, \eqref{t1} and~\eqref{t3} together with Lemmas~\ref{lemm3} and~\ref{lemt}, yields~\eqref{st1t}.
\end{proof}

Our next goal is to establish inverse estimates for operators~$G_n$.

\begin{proposition}\label{prinv}
Let $X$ be a Banach lattice satisfying Assumption~A, and let $r, s \in \N$ with $2r\ge s$.
For all $f\in X$, the following statements hold:
\begin{itemize}
  \item[(i)]  If $G_n$ satisfies assumptions~\eqref{a1}, \eqref{a2}, and~\eqref{a3}, then
\begin{equation}\label{st1inv}
  C_1\|(I-A_{\g/n})^r f\|_{X_n}-C_2\|(I-\dot A_{\g/n})^s f\|_{X}\le  \|f-G_n f\|_X.
\end{equation}
  \item[(ii)]  If $G_n$ satisfies assumptions~\eqref{a1} and~\eqref{a3}, then
\begin{equation}\label{st1invt}
 C_1\|(I-A_{\g/n})^r f\|_{X_n}-C_2\|(I-\dot A_{\g/n})^s f\|_{X}\le  \|f-G_n f\|_X+\|f-G_n f\|_{X_n}.
\end{equation}
\end{itemize}
 In both estimates, the constants $C_1$ and $C_2$ are independent of $f$ and $n$.
\end{proposition}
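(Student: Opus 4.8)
The plan is to reduce both parts to a single quantity. Following the notation in~\eqref{fh}, set $h=\g/n$ and $f_h=f-(I-A_h)^rf$, so that $\|(I-A_{\g/n})^rf\|_{X_n}=\|f-f_h\|_{X_n}$. I would first record two facts that are available free of charge: the bound $\|f_h-G_nf_h\|_X\le C\|(I-\dot A_{\g/n})^sf\|_X$, which is exactly~\eqref{mo2} and uses only~\eqref{a1} and~\eqref{a3} (hence holds under the hypotheses of both parts); and the elementary estimate
$$
\|f-f_h\|_X=\|(I-A_h)^rf\|_X\le C\|(I-\dot A_h)^{2r}f\|_X\le C\|(I-\dot A_h)^sf\|_X,
$$
where the first inequality is Lemma~\ref{lemm1}(iii) and the second uses Lemma~\ref{lemm1}(i) to discard the surplus $2r-s$ factors of $I-\dot A_h$ (recall $2r\ge s$).

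For part~(i) I would invoke~\eqref{a2} at the outset: $\|f-f_h\|_{X_n}\le K_2^{-1}\|G_n(f-f_h)\|_X$. Writing $G_n(f-f_h)=(G_nf-f)+(f-f_h)+(f_h-G_nf_h)$ and applying the two facts above bounds the right-hand side by $\|f-G_nf\|_X+C\|(I-\dot A_{\g/n})^sf\|_X$, which is~\eqref{st1inv} with $C_1=K_2$.

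For part~(ii), where~\eqref{a2} is unavailable, I would instead use the triangle inequality
$$
\|f-f_h\|_{X_n}\le\|f-G_nf\|_{X_n}+\|G_n(f-f_h)\|_{X_n}+\|f_h-G_nf_h\|_{X_n}.
$$
Since $G_n(f-f_h)\in\Phi_n$, Lemma~\ref{lemmz0} (used with the spacing constant $\g'$ of~\eqref{g}) replaces $\|G_n(f-f_h)\|_{X_n}$ by $C\|G_n(f-f_h)\|_X$, which is then estimated exactly as in part~(i). For the last summand I would fix $T\in\mathcal{T}_n$ with $\|f-T\|_X\le2E_n(f)_X$ and split
$$
\|f_h-G_nf_h\|_{X_n}\le\|f_h-T_h\|_{X_n}+\|T_h-G_nT_h\|_{X_n}+\|G_n(T_h-f_h)\|_{X_n}.
$$
The first and third terms are $\le C\|f-T\|_X$ by Lemma~\ref{bxn} (and, for the third, also~\eqref{a1} and Lemma~\ref{lemmz0}), using $T_h-f_h=\sum_{k=1}^r(-1)^{k+1}\binom{r}{k}A_h^k(T-f)$; the middle term, since $T_h-G_nT_h\in\mathcal{T}_n$, is controlled by Lemma~\ref{lemmz0} followed by~\eqref{a3}, Assumption~A, and Lemma~\ref{lemm3}, giving $\le C\|(I-\dot A_{\g/n})^sf\|_X$; and finally $\|f-T\|_X\le CE_n(f)_X\le C\|(I-\dot A_{\g/n})^sf\|_X$ by Lemma~\ref{lemm2}(i). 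Collecting everything yields~\eqref{st1invt}.

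The only genuine pitfall is circularity: bounding $\|G_n(f-f_h)\|_X\le K_1\|f-f_h\|_{X_n}$ via~\eqref{a1} would simply reintroduce $\|(I-A_{\g/n})^rf\|_{X_n}$ on the right and be useless. One must instead expand $G_n(f-f_h)$ around $f$ and $f_h$, exploiting that $\|f-f_h\|_X$ and $\|f_h-G_nf_h\|_X$ are already dominated by $\|(I-\dot A_{\g/n})^sf\|_X$; the remainder is routine bookkeeping with Lemmas~\ref{lemmz0}, \ref{bxn}, \ref{lemm1}, \ref{lemm2}, \ref{lemm3} and assumptions~\eqref{a1}--\eqref{a3}.
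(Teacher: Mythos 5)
Your proof is correct and follows essentially the same route as the paper: part (i) is the paper's argument verbatim (apply \eqref{a2} to $f-f_h$, expand $G_n(f-f_h)$ around $f$ and $f_h$, and use \eqref{mo2} together with Lemma~\ref{lemm1}(iii),(i)), while part (ii) uses the same decomposition as the paper's \eqref{prt1}--\eqref{prt2}, the only difference being that you bound $\|f_h-G_nf_h\|_{X_n}$ directly through a near-best polynomial $T$ (in effect re-proving \eqref{st1t} at $f_h$), whereas the paper invokes \eqref{st1t} for $f_h$ and commutes the Steklov averages. One shared caveat: your step ``$T_h-G_nT_h\in\mathcal{T}_n$, so Lemma~\ref{lemmz0} applies'' presupposes that $G_n$ maps into $\mathcal{T}_n$, which is exactly the hypothesis under which \eqref{st1t} is stated, so your argument is on the same footing as the paper's own proof of (ii), where this assumption is likewise not explicit in the statement.
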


\begin{proof}  Let $f_h$ and $h$ be defined by~\eqref{fh}. For $G_n$ satisfying~\eqref{a1} and~\eqref{a3}, we have
\begin{equation}\label{mo2++}
  \|f_h-G_n f_h\|_X\le C\|(I-\dot A_{{\g}/{n}})^s f\|_{X},
\end{equation}
see inequality~\eqref{mo2} in the proof of Theorem~\ref{thst}.

$\textbf{(i)}$
Using assumption~\eqref{a2} together with \eqref{mo2++} and Lemma~\ref{lemm1}(iii), (i), we obtain
\begin{equation*}
  \begin{split}
     K_2\|(I-A_{\g/n})^r f\|_{X_n}&=K_2\|f-f_h\|_{X_n}\le \|G_n (f-f_h)\|_X\\
     &\le \|f-f_h\|_X+\|f_h-G_n f_h\|_X+\|f-G_n f\|_X\\
     &\le C\(\|(I-A_{\g/n})^r f\|_{X}+\|(I-\dot A_{\g/n})^s f\|_{X}\)+\|f-G_n f\|_X\\
     &\le C\|(I-\dot A_{\g/n})^s f\|_{X}+\|f-G_n f\|_X,
   \end{split}
\end{equation*}
which proves~\eqref{st1inv}.

\smallskip

$\textbf{(ii)}$
Applying Lemma~\ref{lemmz0}, we get
\begin{equation}\label{prt1}
  \begin{split}
     \|(I-A_{\g/n})^r f\|_{X_n}&=\|f-f_h\|_{X_n}\\
     &\le \|G_n(f-f_h)\|_{X_n}+\|f-f_h-G_n(f-f_h)\|_{X_n}\\
     &\le C\|G_n(f-f_h)\|_{X}+\|f-G_nf\|_{X_n}+\|f_h-G_n f_h\|_{X_n}.
   \end{split}
\end{equation}
By Lemma~\ref{lemm1}(iii) and inequality~\eqref{mo2}, we obtain
\begin{equation}\label{prt2}
  \begin{split}
     \|G_n(f-f_h)\|_{X}&\le \|G_n f-f\|_{X}+\|f-f_h\|_{X}+\|f_h-G_n f_h\|_{X}\\
     &\le \|G_n f-f\|_{X}+C\|(I-\dot A_{\g/n})^s f\|_{X}.
  \end{split}
\end{equation}
Furthermore,  using~\eqref{st1t}, Assumption~A, Lemma~\ref{bxn}, and Lemma~\ref{lemm1}(iii), (i), we have
\begin{equation}\label{prt3}
  \begin{split}
     \|f_h-G_n f_h\|_{X_n}&\le C\(\|(I-\dot A_{\g/n})^s f_h\|_{X}+\|(I-A_{\g/n})^r f_h\|_{X_n}\)\\
     &=C\(\|((I-\dot A_{\g/n})^s f)_h\|_{X}+\|((I-A_{\g/n})^r f)_h\|_{X_n}\)\\
     &\le C\(\|(I-\dot A_{\g/n})^s f\|_{X}+\|(I-A_{\g/n})^r f\|_{X}\)\\
     &\le C\|(I-\dot A_{\g/n})^s f\|_{X}.
  \end{split}
\end{equation}
Combining~\eqref{prt1}, \eqref{prt2}, and~\eqref{prt3} completes the proof of ~\eqref{st1invt}.
\end{proof}

\begin{corollary}\label{cor1}
Assume that the set of all trigonometric polynomials is dense in $X$. For  all $f\in B(\T)$, the following statements hold:

\begin{itemize}
  \item[(i)] If the operators $G_n$ satisfy assumptions~\eqref{a1}, \eqref{a2}, and~\eqref{a3}, then
  $$
    \|f-G_n f\|_X\to 0\quad\text{as } n\to \infty
  $$
if and only if
\begin{equation*}
  \|(I-A_{\g/n})^r f\|_{X_n}\to 0\quad\text{as } n\to \infty.
\end{equation*}

  \item[(ii)]  If the operators $G_n: X_n\, \to \mathcal{T}_{n}$ satisfy assumptions~\eqref{a1} and~\eqref{a3}, then
   $$
   \|f-G_n f\|_X+\|f-G_n f\|_{X_n}\to 0\quad\text{as } n\to \infty
   $$
   if and only if
\begin{equation*}
  \|(I-A_{\g/n})^r f\|_{X_n}\to 0\quad\text{as } n\to \infty.
\end{equation*}
\end{itemize}
\end{corollary}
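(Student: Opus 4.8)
The plan is to derive both equivalences directly from the two-sided estimates already established, namely the direct estimate of Theorem~\ref{thst} and the inverse estimate of Proposition~\ref{prinv}, by controlling the "auxiliary" smoothness term $\|(I-\dot A_{\g/n})^s f\|_X$ in terms of the "main" term. The key structural observation is that in part~(i) the quantity on the right of~\eqref{st1} is bounded by $\|f-G_nf\|_X$ plus $C\|(I-\dot A_{\g/n})^s f\|_X$ (and conversely by~\eqref{st1inv}), while in part~(ii) the role of $\|f-G_nf\|_X$ is played by $\|f-G_nf\|_X+\|f-G_nf\|_{X_n}$, using~\eqref{st1}--\eqref{st1t} and~\eqref{st1invt}. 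So in both cases everything reduces to showing that $\|(I-\dot A_{\g/n})^s f\|_X\to 0$ automatically whenever $f\in B(\T)$ and the trigonometric polynomials are dense in $X$; then the two sides of each claimed equivalence differ by a null sequence and the proof closes.

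First I would establish the auxiliary convergence $\|(I-\dot A_{\g/n})^s f\|_X\to 0$ for every $f\in B(\T)$. Since $f$ is bounded and $\chi_\T\in X$ with finite norm, $f\in X$; fix $\e>0$ and pick a trigonometric polynomial $P$ with $\|f-P\|_X<\e$ (density). By Lemma~\ref{lemm1}(i), $\|(I-\dot A_{\g/n})^s(f-P)\|_X\le C\|f-P\|_X<C\e$, uniformly in $n$. For the polynomial part, Lemma~\ref{lemm1}(iv) gives $\|(I-\dot A_{\g/n})^s P\|_X\le C(\g/n)^s\|(I-\dot A_{\g/n})^{0}P^{(s)}\|_X=C(\g/n)^s\|P^{(s)}\|_X\to 0$ as $n\to\infty$ (with $P$, hence $P^{(s)}$, fixed). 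Combining, $\limsup_n\|(I-\dot A_{\g/n})^s f\|_X\le C\e$, and letting $\e\to0$ yields the claim. (In the degenerate reading of Lemma~\ref{lemm1}(iv) with $r=s$, one may instead write $\|(I-\dot A_{h})^sP\|_X\le Ch^s\|P^{(s)}\|_X$ directly, or iterate the one-step bound $s$ times; either way the polynomial term is $O(n^{-s})$.)

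Next I would assemble part~(i). For the forward direction, assume $\|f-G_nf\|_X\to0$. By~\eqref{st1inv}, $C_1\|(I-A_{\g/n})^r f\|_{X_n}\le \|f-G_nf\|_X+C_2\|(I-\dot A_{\g/n})^s f\|_X$, and both terms on the right tend to $0$ (the second by the auxiliary claim), so $\|(I-A_{\g/n})^r f\|_{X_n}\to0$. For the converse, assume $\|(I-A_{\g/n})^r f\|_{X_n}\to0$; then~\eqref{st1} gives $\|f-G_nf\|_X\le C\|(I-\dot A_{\g/n})^s f\|_X+C\|(I-A_{\g/n})^r f\|_{X_n}\to0$. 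Part~(ii) is entirely parallel: under assumptions~\eqref{a1}, \eqref{a3} and $G_n:X_n\to\mathcal{T}_n$, add~\eqref{st1} and~\eqref{st1t} to bound $\|f-G_nf\|_X+\|f-G_nf\|_{X_n}$ by $C\|(I-\dot A_{\g/n})^s f\|_X+C\|(I-A_{\g/n})^r f\|_{X_n}$ for the "if" direction, and use~\eqref{st1invt} together with the auxiliary claim for the "only if" direction.

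The only genuine point requiring care — the main obstacle, such as it is — is the auxiliary convergence step, and specifically justifying that $f\in B(\T)$ together with density of polynomials suffices; the boundedness of $f$ enters only to guarantee $f\in X$ (via property~3) and $\chi_\T\in X$) and that $\|f\|_{X_n}$ and the moduli are finite, after which density and Lemma~\ref{lemm1}(i),(iv) do all the work. Everything else is a mechanical combination of the already-proven two-sided estimates, so no additional estimates are needed.
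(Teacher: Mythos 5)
Your proposal is correct and follows essentially the same route as the paper: prove that $\|(I-\dot A_{\g/n})^s f\|_{X}\to 0$ and then read off both directions of each equivalence from Theorem~\ref{thst} and Proposition~\ref{prinv}. The only difference is how the auxiliary convergence is justified --- the paper invokes the inverse estimate of Lemma~\ref{lemm2}(ii) (with $E_k(f)_X\to 0$ from density), whereas you give a direct $\varepsilon$-argument via density and Lemma~\ref{lemm1}(i),(iv); the two are routine equivalents, so no substantive gap remains.
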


\begin{proof}
By Lemma~\ref{lemm2}(ii), we have  $\|(I-\dot A_{h})^s f\|_{X}\to 0$ as $h\to 0$.
Using this fact, the assertions of the corollary follow directly from Theorem~\ref{thst} and Proposition~\ref{prinv}.
\end{proof}

The following theorem is an analogue of the classical  Bernstein-type inverse theorem for trigonometric polynomials, cf.~\cite[Ch.~7, Theorem~3.1]{DL} and Lemma~\ref{lemm2}(ii).

\begin{theorem}\label{thinv}
Let $X$ be a Banach lattice satisfying Assumption~A, and let $r, s \in \N$ with $2r\ge s$.
For all $f\in X$, the following statements hold:
\begin{itemize}
  \item[(i)]  If the operators $G_n$ satisfy assumptions~\eqref{a1}, \eqref{a2}, and~\eqref{a3}, then
\begin{equation*}
  \|(I-\dot A_{\g/n})^s f\|_{X}+\|(I-A_{\g/n})^r f\|_{X_n}\le  \frac{C}{n^s}\sum_{k=0}^{n} (k+1)^{s-1}\|f-G_k f\|_X.
\end{equation*}
  \item[(ii)]  If the operators  $G_n$ satisfy assumptions~\eqref{a1} and~\eqref{a3}, then
\begin{equation*}
\begin{split}
    \|(I-&\dot A_{\g/n})^s f\|_{X}+\|(I-A_{\g/n})^r f\|_{X_n}\\
    &\le C\(\|f-G_{n} f\|_X+\|f-G_n f\|_{X_n}+\frac{1}{n^s}\sum_{k=0}^{n} (k+1)^{s-1}\|f-G_{k} f\|_X\).
\end{split}
\end{equation*}
\end{itemize}
 In both estimates, the constant $C$ is independent of $f$ and $n$.
\end{theorem}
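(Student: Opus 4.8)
I would reduce the estimate at scale $\g/n$ to three facts that are already available: the inverse theorem for best trigonometric approximation (Lemma~\ref{lemm2}(ii)), the inverse stability inequalities \eqref{st1inv}--\eqref{st1invt} of Proposition~\ref{prinv}, and the direct estimate \eqref{st1}. Write
\[
a:=\|(I-\dot A_{\g/n})^sf\|_X,\quad b:=\|(I-A_{\g/n})^rf\|_{X_n},\quad e:=\|f-G_nf\|_X,\quad S_n:=\tfrac{1}{n^s}\sum_{k=0}^n(k+1)^{s-1}\|f-G_kf\|_X,
\]
so that the left-hand side to be estimated is $a+b$. Proposition~\ref{prinv} will serve to trade the discrete-norm smoothness term $b$ for $e$ (plus $a$, and, in part~(ii), $\|f-G_nf\|_{X_n}$), after which the whole task reduces to bounding $a$ and $e$ by $S_n$.

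\emph{Bounding $a$.} If $\Phi_k\subset\mathcal{T}_k$, then $E_k(f)_X\le\|f-G_kf\|_X$, and Lemma~\ref{lemm2}(ii) gives at once $a\le\frac{C}{n^s}\sum_{k=0}^{n-1}(k+1)^{s-1}E_k(f)_X\le CS_n$. A self-contained variant valid for general $\Phi_k$ is to telescope $f=(f-G_{k_\mu}f)+\sum_{j=1}^\mu(G_{k_j}f-G_{k_{j-1}}f)$ over the dyadic scales $2^j\le n<2^{\mu+1}$, with $k_j\in[2^{j-1},2^j)$ chosen to minimise $\|f-G_kf\|_X$ in that block; estimate each polynomial block $G_{k_j}f-G_{k_{j-1}}f\in\Phi_{k_j}$ through $\|(I-\dot A_{\g/n})^s(G_{k_j}f-G_{k_{j-1}}f)\|_X\le C(2^j/n)^s\bigl(\|f-G_{k_j}f\|_X+\|f-G_{k_{j-1}}f\|_X\bigr)$, using Lemma~\ref{lemm1}(iv) and the Bernstein inequality \eqref{ber}, handle the remainder $f-G_{k_\mu}f$ by Lemma~\ref{lemm1}(i), and compare the resulting dyadic sum with $\sum_{k\le n}(k+1)^{s-1}\|f-G_kf\|_X$ (each dyadic block of which dominates the corresponding $2^{js}$-weighted minimum); this again yields $a\le CS_n$.

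\emph{Finishing.} By Proposition~\ref{prinv}(i), $b\le Ca+Ce\le CS_n+Ce$; by Proposition~\ref{prinv}(ii), $b\le Ca+Ce+C\|f-G_nf\|_{X_n}$. In part~(ii) the right-hand side of the asserted inequality already contains $\|f-G_nf\|_X$, $\|f-G_nf\|_{X_n}$ and $S_n$, so part~(ii) is complete. For part~(i) it remains to show $e\le CS_n$; combining the direct estimate $e\le C(a+b)$ with $b\le Ca+Ce$ only produces the self-referential pair $e\lesssim a+b$, $b\lesssim a+e$, which does not close by itself. Instead I would run the argument at all scales $k\le n$: applying Proposition~\ref{prinv}(i) at level $k$ (here assumption \eqref{a2} is indispensable), the bound $\|(I-\dot A_{\g/k})^sf\|_X\le\tfrac{C}{k^s}\sum_{j=0}^{k-1}(j+1)^{s-1}\|f-G_jf\|_X$ from the previous step applied at level $k$, and a quasi-monotonicity estimate bounding $\|(I-\dot A_{\g/n})^sf\|_X$ and $\|(I-A_{\g/n})^rf\|_{X_n}$ by the corresponding quantities at scale $k$, one arrives at
\[
\|f-G_nf\|_X\le\frac{C}{k^s}\sum_{j=0}^{k-1}(j+1)^{s-1}\|f-G_jf\|_X+C\|f-G_kf\|_X,\qquad \tfrac n2\le k\le n .
\]
Multiplying by $(k+1)^{s-1}$, summing over $\tfrac n2\le k\le n$, and using $\sum_{n/2\le k\le n}(k+1)^{s-1}\asymp n^s$ together with $\sum_{n/2\le k\le n}(k+1)^{s-1}k^{-s}\asymp1$ then gives $e\le CS_n$, whence $a+b\le CS_n$, which is part~(i).

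The step I expect to be the main obstacle is the quasi-monotonicity invoked at the end: the part concerning $a=\|(I-\dot A_{\g/n})^sf\|_X$ follows directly from Lemma~\ref{lemm1}(ii), but the part comparing the discrete-norm smoothness term $\|(I-A_{\g/n})^rf\|_{X_n}$ with its analogue at a coarser scale $k\le n$ requires controlling the two genuinely different node sets $\mathcal{X}_n$ and $\mathcal{X}_k$ simultaneously, and it is exactly here that assumption \eqref{a2} must be combined with the Marcinkiewicz--Zygmund inequalities of Lemmas~\ref{lemmz0} and~\ref{lemmz}. By contrast, the remaining ingredients (Steps bounding $a$ and trading $b$) are routine consequences of the auxiliary lemmas.
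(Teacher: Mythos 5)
Your part (ii), together with your bound for the term $\|(I-\dot A_{\gamma/n})^s f\|_X$, is correct and is essentially the paper's own argument: the paper likewise starts from Proposition~\ref{prinv} (from \eqref{st1invt} for part (ii), from \eqref{st1inv} for part (i)) and bounds $\|(I-\dot A_{\gamma/n})^s f\|_X$ by a dyadic telescoping, using near-best elements $\varphi_{2^\nu}\in\Phi_{2^\nu}$, Lemma~\ref{lemm1}(i),(iv), the Bernstein inequality \eqref{ber}, and $E(f,\Phi_\nu)_X\le\|f-G_\nu f\|_X$; your block-minimizer variant with $G_{k_j}f$ in place of $\varphi_{2^\nu}$ is an equivalent routine modification. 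So for (ii) there is nothing to object to.

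For part (i), however, your proposal is incomplete, and the unproved step is exactly the one you flag. The cross-scale comparison of $\|(I-A_{\gamma/n})^r f\|_{X_n}$ with quantities at a coarser scale $k$ cannot be extracted from \eqref{a2}, Lemma~\ref{lemmz0} or Lemma~\ref{lemmz}: those inequalities control discrete norms of elements of $\Phi_n$ or $\mathcal{T}_n$, whereas $(I-A_{\gamma/n})^r f$ is a general element of $X$ whose values on $\mathcal{X}_n$ are invisible both to the $X$-norm and to the node sets $\mathcal{X}_k$, $k<n$. Concretely, in the paper's pointwise framework take $X=L_p(\T)$, $1<p<\infty$, $G_k=\mathcal{L}_k$ (so \eqref{a1}--\eqref{a3} hold), and let $f$ equal $M$ at the single node $t_{1,n}=2\pi/(2n+1)$ and $0$ elsewhere; since $t_{1,n}\notin\mathcal{X}_k$ for $k<n$, all scale-$k$ errors and $\|(I-\dot A_{\gamma/n})^s f\|_X$ vanish, while $\|(I-A_{\gamma/n})^r f\|_{X_n}\approx\|f-\mathcal{L}_nf\|_{L_p}\approx Mn^{-1/p}$ and $n^{-s}\sum_{k\le n}(k+1)^{s-1}\|f-G_kf\|_{L_p}\approx Mn^{-1/p-1}$. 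This shows both that the quasi-monotonicity you invoke fails and that the absorption $\|f-G_nf\|_X\le C S_n$ you aim for is not available under the stated hypotheses alone. You should also know that the paper's proof of (i) does not carry out this absorption either: it merely combines \eqref{st1inv} (via \eqref{inv1}) with the dyadic bound \eqref{inv3}, which literally yields $\|(I-\dot A_{\gamma/n})^s f\|_{X}+\|(I-A_{\gamma/n})^r f\|_{X_n}\le C\bigl(n^{-s}\sum_{k\le n}(k+1)^{s-1}\|f-G_kf\|_X+\|f-G_nf\|_X\bigr)$, i.e.\ precisely the combination you rightly observed does not close to the statement as printed. So your diagnosis of where the difficulty lies is sharper than the paper's treatment, but your proposed repair does not work, and part (i) in the form stated is not established by your argument (nor, as written, by the paper's).
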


\begin{proof}
$\textbf{(i)}$ It follows from~\eqref{st1inv} that
\begin{equation}\label{inv1}
\begin{split}
  \|(I-\dot A_{\g/n})^s f\|_{X}+\|(I-A_{\g/n})^r &f\|_{X_n}\le C\(\|(I-\dot A_{\g/n})^s f\|_{X}+\|f-G_n f\|_X\).
\end{split}
\end{equation}

Denote $E(f,\Phi_k)_X:=\inf_{\vp\in \Phi_k}\|f-\vp\|_X$.
Let $\vp_k\in \Phi_k$ satisfy $\|f-\vp_k\|_X\le 2 E(f,\Phi_k)_X$, $k=0,1,2,\dots.$
We choose $m\in \Z_+$ such that $2^m\le n<2^{m+1}$.
Applying Lemma~\ref{lemm1}(i), (iv), we obtain
\begin{equation}\label{inv2}
\begin{split}
  \|(I-\dot A_{\g/n})^s f\|_{X}&\le \|(I-\dot A_{\g/n})^s (f-\vp_{2^m})\|_{X}+\|(I-\dot A_{\g/n})^s \vp_{2^m}\|_{X}\\
  &\le C\(\|f-\vp_{2^m}\|_X+n^{-s}\|\vp_{2^m}^{(s)}\|_X\).
\end{split}
\end{equation}
Using the equality
$$
\vp_{2^m}=\sum_{\nu=0}^{m} (\vp_{2^\nu}-\vp_{2^{\nu-1}})\quad\text{with the convention}\, \vp_{2^{-1}}:=0
$$
and assumption~\eqref{ber}, we have
\begin{equation}\label{dopp2}
  \begin{split}
     \|\vp_{2^m}^{(s)}\|_{X}&\le \sum_{\nu=0}^{m} \|(\vp_{2^\nu}-\vp_{2^{\nu-1}})^{(s)}\|_{X}\\
     &\le C\sum_{\nu=0}^{m} 2^{s\nu}\|\vp_{2^\nu}-\vp_{2^{\nu-1}}\|_{X}\le C\(\|f\|_p+\sum_{\nu=1}^{m} 2^{s\nu}E(f,\Phi_{2^{\nu-1}})_X\).
   \end{split}
\end{equation}
Thus, from~\eqref{inv2} and~\eqref{dopp2} and the monotonicity of $E(f,\Phi_k)_X$, we get
\begin{equation}\label{inv3}
  \|(I-\dot A_{\g/n})^s f\|_{X}\le Cn^{-s}\sum_{\nu=0}^{n}(\nu+1)^{s-1}E(f,\Phi_\nu)_X.
\end{equation}
Finally, combining~\eqref{inv1} and~\eqref{inv3} together with the inequality $E(f,\Phi_\nu)_X\le \|f-G_\nu f\|_X$, we prove the first statement of the theorem.

\smallskip

$\textbf{(ii)}$  The proof of (ii) follows the same steps as in (i), starting from~\eqref{st1invt}.
\end{proof}

The application of Theorems~\ref{thst} and~\ref{thinv}(i) yields the following corollary.

\begin{corollary}\label{cor2}
  Under the assumptions of Theorem~\ref{thinv}(i), the following properties are equivalent for all  $\a \in (0,s)$:
  \begin{itemize}
    \item[(i)]  $\|f-G_n f\|_X=\mathcal{O}(n^{-\a})$,
    \item[(ii)] $\|(I-A_{\g/n})^r f\|_{X_n}+\|(I-\dot A_{\g/n})^s f\|_{X}=\mathcal{O}(n^{-\a})$.
  \end{itemize}
\end{corollary}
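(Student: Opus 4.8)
The plan is to read off both implications from the direct estimate of Theorem~\ref{thst} and the inverse estimate of Theorem~\ref{thinv}(i), together with one elementary summation; fix $f\in X$ and $\a\in(0,s)$ throughout, and recall that the hypotheses of Theorem~\ref{thinv}(i) contain those of Theorem~\ref{thst}.

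The implication (ii)$\Rightarrow$(i) is immediate. By inequality~\eqref{st1} of Theorem~\ref{thst},
$$
\|f-G_nf\|_X\le C\bigl(\|(I-\dot A_{\g/n})^sf\|_X+\|(I-A_{\g/n})^rf\|_{X_n}\bigr),
$$
so $\|f-G_nf\|_X=\mathcal{O}(n^{-\a})$ as soon as the quantity in (ii) is $\mathcal{O}(n^{-\a})$.

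For (i)$\Rightarrow$(ii) I would start from the inverse estimate
$$
\|(I-\dot A_{\g/n})^sf\|_X+\|(I-A_{\g/n})^rf\|_{X_n}\le \frac{C}{n^s}\sum_{k=0}^n (k+1)^{s-1}\|f-G_kf\|_X
$$
of Theorem~\ref{thinv}(i) and insert the assumed decay. Each $\|f-G_kf\|_X$ is finite: for $k=0$ it equals $\|f\|_X$ since $G_0=0$, and for $k\ge1$ one has $G_kf=\sum_j f(x_{j,k})\vp_{j,k}\in\Phi_k\subset X$, because $f$ is finite-valued. Hence, after enlarging the constant to absorb the finitely many initial indices, hypothesis (i) gives a constant $M=M_f$ with $\|f-G_kf\|_X\le M(k+1)^{-\a}$ for all $k\ge0$, so the right-hand side above is at most $\tfrac{CM}{n^s}\sum_{k=0}^n (k+1)^{s-1-\a}$. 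Since $\a<s$ the exponent $s-1-\a$ exceeds $-1$, so the standard bound $\sum_{k=0}^n (k+1)^{\beta}\le C_\beta\, n^{\beta+1}$ for $\beta>-1$ applies with $\beta=s-1-\a$ (so $\beta+1=s-\a>0$), giving $\sum_{k=0}^n (k+1)^{s-1-\a}\le C\,n^{s-\a}$; dividing by $n^s$ produces the bound $C\,n^{-\a}$, which is (ii).

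I do not anticipate any real obstacle here: the one point to watch is that the summation exponent $s-1-\a$ stays in the convergent range $(-1,\infty)$, which is exactly the hypothesis $\a<s$, while the finitely many small-$k$ terms (in particular the constant term $\|f\|_X$ at $k=0$) are harmless because they contribute only $\mathcal{O}(n^{-s})=\mathcal{O}(n^{-\a})$. The restriction $2r\ge s$ is merely inherited from Theorems~\ref{thst} and~\ref{thinv} and plays no further role in this argument.
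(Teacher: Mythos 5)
Your argument is correct and is exactly what the paper intends: the implication (ii)$\Rightarrow$(i) from the direct estimate~\eqref{st1} of Theorem~\ref{thst}, and (i)$\Rightarrow$(ii) from the inverse estimate of Theorem~\ref{thinv}(i) combined with the standard summation $\sum_{k=0}^n(k+1)^{s-1-\a}\le C n^{s-\a}$, which uses precisely $\a<s$. The paper states the corollary as an immediate application of these two theorems, so your proof matches its route in full.
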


Similarly, the application of Theorems~\ref{thst} and \ref{thinv}(ii) yields the following result.

\begin{corollary}\label{cor2t}
  Under the assumptions  of Theorem~\ref{thinv}(ii), with $G_n: X_n \to \mathcal{T}_{n}$, the following properties are equivalent for all  $\a \in (0,s)$:
  \begin{itemize}
    \item[(i)]  $\|f-G_n f\|_X+\|f-G_n f\|_{X_n}=\mathcal{O}(n^{-\a})$,
    \item[(ii)] $\|(I-\dot A_{\g/n})^s f\|_{X}+\|(I-A_{\g/n})^r f\|_{X_n}=\mathcal{O}(n^{-\a})$.
  \end{itemize}
\end{corollary}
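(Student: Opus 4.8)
The plan is to derive Corollary~\ref{cor2t} from Theorem~\ref{thst} and Theorem~\ref{thinv}(ii) in exactly the same way that Corollary~\ref{cor2} was obtained from Theorem~\ref{thst} and Theorem~\ref{thinv}(i); no new ideas beyond those two results are needed.

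For the implication $(ii)\Rightarrow(i)$, assume $G_n\colon X_n\to\mathcal{T}_n$ satisfies~\eqref{a1} and~\eqref{a3}. Then estimate~\eqref{st1} of Theorem~\ref{thst} bounds $\|f-G_nf\|_X$ and estimate~\eqref{st1t} bounds $\|f-G_nf\|_{X_n}$, both by the same quantity $C\bigl(\|(I-\dot A_{\g/n})^sf\|_X+\|(I-A_{\g/n})^rf\|_{X_n}\bigr)$. Adding the two inequalities gives
\begin{equation*}
\|f-G_nf\|_X+\|f-G_nf\|_{X_n}\le C\bigl(\|(I-\dot A_{\g/n})^sf\|_X+\|(I-A_{\g/n})^rf\|_{X_n}\bigr),
\end{equation*}
so if the right-hand side is $\mathcal{O}(n^{-\a})$, then so is the left-hand side.

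For the implication $(i)\Rightarrow(ii)$, I would invoke Theorem~\ref{thinv}(ii), which yields
\begin{equation*}
\|(I-\dot A_{\g/n})^sf\|_X+\|(I-A_{\g/n})^rf\|_{X_n}\le C\Bigl(\|f-G_nf\|_X+\|f-G_nf\|_{X_n}+\frac{1}{n^s}\sum_{k=0}^n(k+1)^{s-1}\|f-G_kf\|_X\Bigr).
\end{equation*}
Under $(i)$ the first two terms on the right are $\mathcal{O}(n^{-\a})$, and moreover $\|f-G_kf\|_X\le Ck^{-\a}$ for $k\ge1$ while $\|f-G_0f\|_X=\|f\|_X$ is a fixed constant. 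Hence the Ces\`aro-type sum is controlled by $\frac{C}{n^s}\sum_{k=0}^n(k+1)^{s-1-\a}$, and since $\a<s$ we have $s-1-\a>-1$, so $\sum_{k=0}^n(k+1)^{s-1-\a}\le Cn^{s-\a}$; this gives a bound of order $n^{-\a}$ for the sum as well. Combining the three contributions proves $(ii)$.

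The only step that is not a verbatim substitution into the two cited theorems is the evaluation of the Ces\`aro-type sum, and this is also the place where the hypothesis $\a\in(0,s)$ is used; it presents no genuine difficulty. I therefore expect the whole argument to be short, with the bookkeeping of the $X_n$-seminorm (which enters the left-hand side in $(i)$ but only through the $X$-norm terms $\|f-G_kf\|_X$ in the sum) being the single point deserving a word of care.
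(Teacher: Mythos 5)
Your proposal is correct and matches the paper's own (essentially one-line) argument: the paper derives Corollary~\ref{cor2t} precisely by combining the two estimates~\eqref{st1} and~\eqref{st1t} of Theorem~\ref{thst} with the inverse bound of Theorem~\ref{thinv}(ii), the only computation being the standard evaluation of the Ces\`aro-type sum using $\a<s$, exactly as you carry it out (including the harmless $k=0$ term, since the implied constants may depend on $f$).
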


\subsection{Strong converse inequalities for sampling operators}\label{secs}
One of the features of this work is to show that the measure of smoothness $\|(I-\dot A_{\g/n})^s f\|_{X}+\|(I-A_{\g/n})^r f\|_{X_n}$ is also suitable for establishing strong converse inequalities for sampling operators acting on a Banach lattices $X$. Recall that for an approximation method $\mathcal{A}_n$, the strong converse inequality has the form
\begin{equation}\label{eqsc}
  \Omega(f,1/n)_X\le C\|f-\mathcal{A}_n f\|_X,
\end{equation}
where  $\Omega(f,1/n)_X$ is a certain measure  of smoothness compatible with the method $\mathcal{A}_n$
(a classical or modified modulus of smoothness, a $K$-functional, or related quantities) and $C$ is a positive constant independent of $f$ and $n$, see~\cite{DI93}.  Note that, according to~\cite{DI93},  there several types of strong converse inequalities, but the one in~\eqref{eqsc} is mostly considered in the literature. Furthermore, for many approximation processes, it is possible to show that the corresponding direct error estimate for $\|f-\mathcal{A}_n f\|_X$  may hold with the same measure of smoothness $\Omega(f,1/n)_X$, thus providing an equivalence
\begin{equation}\label{eqsc2}
  \|f-\mathcal{A}_n f\|_X\asymp \Omega(f,1/n)_X
\end{equation}
 with positive constants independent of $f$ and $n$.

 For the operators of the convolution-type (Fej\'er means, Bochner–Riesz means, etc.) and certain important operators of different type such as the Bernstein and Kantorovich operators, quasi-projection operators, and families of linear polynomial operators having special kernels, relations of type~\eqref{eqsc2} have been known for a long time, see~\cite{T80}, \cite{DI93}, \cite{To94}, \cite{GZ95}, \cite[Ch.~8]{TB}, \cite{RS08}. However, although for the operators of the convolution type and some Kantorovich type sampling operators, strong converse inequalities, and hence~\eqref{eqsc2}, have been established in various (quasi-)Banach spaces including the Lebesgue spaces $L_p$ with $0<p\le\infty$, see, e.g.,~\cite{K12}, \cite{KP21}, \cite{KS21}, \cite{RS08}, the strong converse inequalities  for sampling operators have been considered so far only in the uniform norm (see, e.g., \cite{AD22}, \cite{To94}, \cite{XZ06}, \cite{Zh09}),  except for partial results in~\cite[Theorem~4.4]{KP21}, where the converse inequalities involve the term $\widetilde{E}_n(f)_{L_p}$
 and except of special converse inequalities obtained in the context of the approximation of derivatives in $L_p$-norm~\cite{AD24}.

In this paper, we identify, for the first time, an appropriate form of strong converse inequalities for sampling operators within a general Banach lattice satisfying Assumption~A, yielding results that are new even in the classical $L_p$-case.
To formulate these results, we need to introduce one additional assumption on $G_n$, involving a fixed parameter $s\in \N$:


{ 
\begin{equation}\label{a4}\tag{$A_4$}
  K_4n^{-s}\|T^{(s)}\|_X \le \|T-G_n T\|_X, \quad T\in \mathcal{T}_n,\quad n\in\N,
\end{equation}

\smallskip

\noindent where  $K_4>0$ depends only on $s$ and  $X$.}

\medskip

For specific conditions on $G_n$ that ensure the validity of~\eqref{a4}, see Proposition~\ref{prex2} below.

\begin{theorem}\label{thC}
  Let $X$ be a Banach lattice satisfying Assumption~A, and let $r, s \in \N$ with $2r\ge s$. Assume that $G_n: X_n\to \mathcal{T}_{n}$ satisfies assumptions~\eqref{a1}, \eqref{a3}, and~\eqref{a4}.
  Then, for all  $f\in X$, we have
  \begin{equation}\label{c0}
    \|f-G_n f\|_{X}+\|f-G_n f\|_{X_n}\asymp\|(I-\dot A_{\g/n})^s f\|_{X}+\|(I-A_{\g/n})^r f\|_{X_n},
  \end{equation}
  where $\asymp$ denotes a two-sided inequality with constants independent of $f$ and $n$.
\end{theorem}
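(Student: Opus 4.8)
The plan is to establish the two inequalities in \eqref{c0} separately. The direction ``$\lesssim$'' is immediate: since $G_n: X_n\to\mathcal{T}_n$, Theorem~\ref{thst} applies, and \eqref{st1} and \eqref{st1t} together give
\begin{equation*}
 \|f-G_nf\|_X+\|f-G_nf\|_{X_n}\le C\bigl(\|(I-\dot A_{\g/n})^s f\|_{X}+\|(I-A_{\g/n})^r f\|_{X_n}\bigr).
\end{equation*}
The whole content of the theorem is thus the strong converse inequality ``$\gtrsim$'', and here assumption~\eqref{a4} is decisive. The alternative, Ditzian--Ivanov style route would start from the weak converse estimate of Theorem~\ref{thinv}(ii) and then require an iteration to absorb the tail sum $n^{-s}\sum_{k\le n}(k+1)^{s-1}\|f-G_kf\|_X$; assumption~\eqref{a4} lets us avoid that iteration altogether.

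First I would fix a near-best polynomial $T_n\in\mathcal{T}_n$ with $\|f-T_n\|_X\le 2E_n(f)_X$. Two elementary observations drive the argument: since $G_nf\in\mathcal{T}_n$ one has $E_n(f)_X\le\|f-G_nf\|_X$, hence $\|f-T_n\|_X\le 2\|f-G_nf\|_X$; and since $G_nf-T_n\in\mathcal{T}_n$, the Marcinkiewicz--Zygmund inequality for trigonometric polynomials (Lemma~\ref{lemmz0}) gives $\|G_nf-T_n\|_{X_n}\le C\|G_nf-T_n\|_X\le C\|f-G_nf\|_X$, so that $\|f-T_n\|_{X_n}\le\|f-G_nf\|_{X_n}+C\|f-G_nf\|_X$. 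Applying \eqref{a4} to $T_n$, decomposing $T_n-G_nT_n=(T_n-f)+(f-G_nf)+G_n(f-T_n)$, and bounding the last term by \eqref{a1}, one obtains
\begin{equation*}
 K_4\,n^{-s}\|T_n^{(s)}\|_X\le\|f-T_n\|_X+\|f-G_nf\|_X+K_1\|f-T_n\|_{X_n}\le C\bigl(\|f-G_nf\|_X+\|f-G_nf\|_{X_n}\bigr).
\end{equation*}

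Next I would convert this into a bound for the smoothness measure. Writing $(I-\dot A_{\g/n})^s f=(I-\dot A_{\g/n})^s(f-T_n)+(I-\dot A_{\g/n})^s T_n$, using Lemma~\ref{lemm1}(i) on the first summand and the second inequality of Lemma~\ref{lemm1}(iv) (which for the polynomial $T_n$ gives $\|(I-\dot A_{\g/n})^s T_n\|_X\le C\,n^{-s}\|T_n^{(s)}\|_X$) on the second, yields
\begin{equation*}
 \|(I-\dot A_{\g/n})^s f\|_X\le C\|f-T_n\|_X+C\,n^{-s}\|T_n^{(s)}\|_X\le C\bigl(\|f-G_nf\|_X+\|f-G_nf\|_{X_n}\bigr).
\end{equation*}
For the remaining term I would invoke Proposition~\ref{prinv}(ii), which gives $C_1\|(I-A_{\g/n})^r f\|_{X_n}\le C_2\|(I-\dot A_{\g/n})^s f\|_X+\|f-G_nf\|_X+\|f-G_nf\|_{X_n}$; substituting the bound just obtained for $\|(I-\dot A_{\g/n})^s f\|_X$ gives $\|(I-A_{\g/n})^r f\|_{X_n}\le C(\|f-G_nf\|_X+\|f-G_nf\|_{X_n})$, and adding the two estimates completes the proof of ``$\gtrsim$''.

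The main difficulty is the bookkeeping that is typical of strong converse inequalities: one must repeatedly pass between the norm $\|\cdot\|_X$ and the semi-norm $\|\cdot\|_{X_n}$ while keeping the auxiliary discrete quantities $\|f-T_n\|_{X_n}$ and $\|G_nf-T_n\|_{X_n}$ under control, which is precisely what the Marcinkiewicz--Zygmund inequality (Lemma~\ref{lemmz0}) and assumption~\eqref{a1} provide. Once that is in place, assumption~\eqref{a4} furnishes the $n$-uniform lower bound for the approximation error on polynomials, and the simple fact that $G_nf$ is itself a trigonometric polynomial of degree $n$, so that $E_n(f)_X\le\|f-G_nf\|_X$, is what eliminates any loss and makes the iteration-free argument work.
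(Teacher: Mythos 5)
Your proof is correct, and the converse half is genuinely different from the paper's. The paper applies~\eqref{a4} to the single polynomial $T=G_nf$ (admissible since $G_n:X_n\to\mathcal{T}_n$), so that $T-G_nT=G_n(f-G_nf)$ and~\eqref{a1} immediately give $n^{-s}\|(G_nf)^{(s)}\|_X\le C\|f-G_nf\|_{X_n}$ with no near-best approximant and no Marcinkiewicz--Zygmund bookkeeping; this bound is then fed into the realization estimate of Theorem~\ref{thR} (equivalently, Theorem~\ref{thK} with $g=G_nf$), which controls both terms $\|(I-\dot A_{\g/n})^s f\|_X$ and $\|(I-A_{\g/n})^r f\|_{X_n}$ in one stroke. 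You instead apply~\eqref{a4} to a near-best polynomial $T_n$, which costs you the extra steps $E_n(f)_X\le\|f-G_nf\|_X$ and the Lemma~\ref{lemmz0} estimate of $\|f-T_n\|_{X_n}$, and then you split the smoothness measure: the term $\|(I-\dot A_{\g/n})^s f\|_X$ via Lemma~\ref{lemm1}(i),(iv) applied to $f-T_n$ and $T_n$, and the discrete term via Proposition~\ref{prinv}(ii) (legitimate here, since~\eqref{a1},~\eqref{a3} and $G_n:X_n\to\mathcal{T}_n$ are all assumed, so there is no circularity). All steps check out -- in particular $\|(I-\dot A_{\g/n})^s T_n\|_X\le Cn^{-s}\|T_n^{(s)}\|_X$ is the case $r=s$ of Lemma~\ref{lemm1}(iv), and your decomposition $T_n-G_nT_n=(T_n-f)+(f-G_nf)+G_n(f-T_n)$ together with~\eqref{a1} closes the estimate. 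What the paper's choice $T=G_nf$ buys is brevity and the realization equivalence of Theorem~\ref{thR} as an intermediate result of independent interest; what your route shows is that the $K$-functional/realization machinery is not strictly needed for~\eqref{c0}, only the weaker Proposition~\ref{prinv}(ii), at the price of somewhat heavier bookkeeping between $\|\cdot\|_X$ and $\|\cdot\|_{X_n}$.
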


We note that in general the lower estimate in~\eqref{c0} is not valid without the term $\|f-G_n f\|_{X_n}$, see Section~\ref{sec_comp}.

\begin{proof}[Proof of Theorem~\ref{thC}]
Theorem~\ref{thR} proved below yields
\begin{equation}\label{c1}
\begin{split}
   \|(I-A_{\g/n})^r f\|_{X_n}&+\|(I-\dot A_{\g/n})^s f\|_{X}\\
   &\le C\(\|f-G_n f\|_X+\|f-G_n f\|_{X_n}+n^{-s}\|(G_n f)^{(s)}\|_X\).
\end{split}
\end{equation}
Applying assumptions~\eqref{a4} and~\eqref{a1}, we obtain
\begin{equation}\label{c2}
  \begin{split}
    n^{-s}\|(G_n f)^{(s)}\|_X &\le C \|G_n f-G_n G_n f\|_X\\
    &=C \|G_n (f-G_n f)\|_X\le C\|f-G_n f\|_{X_n}.
  \end{split}
\end{equation}
Thus, combining~\eqref{c1} and~\eqref{c2}, we get the lower estimate in~\eqref{c0}.

The corresponding upper estimate follows directly from Theorem~\ref{thst}.
\end{proof}

Note that in the classical case $X=L_p(\T)$,  the two-sided estimate~\eqref{c0} can be written as follows
\begin{equation*}
    \|f-G_n f\|_{L_p(\T)}+\|f-G_n f\|_{\ell_p(\mathcal{X}_n)}\asymp\w_s(f,1/n)_{L_p(\T)}+\|(I-A_{\g/n})^r f\|_{\ell_p(\mathcal{X}_n)}.
  \end{equation*}

\subsection{Special $K$-functionals and their realizations}\label{secr}

Recall that for $f\in X$, $s\in \N$, and $\d>0$, the Peetre  $K$-functional is defined by
\begin{equation*}
  K_s(f,\d)_X:=\inf_{g\in X^s}(\|f-g\|_X+\d^s\|g^{(s)}\|_X).
\end{equation*}
It is well known (see, e.g.,~\cite[Ch.~6]{DL}) that if $X=L_p(\T)$, then
\begin{equation*}
  K_s(f,\d)_{L_p}\asymp \w_s(f,\d)_{L_p},\quad f\in L_p(\T), \quad \d>0.
\end{equation*}
Recently, in~\cite{V23}, this result was generalized to a general Banach lattice $X$ satisfying Assumption~A.  Namely, it was proved that
for all $f\in X$  and $s\in \N$, the following relation holds:
\begin{equation}\label{k1}
  K_s(f,\d)_{X}\asymp \|(I-\dot A_{\d})^s f\|_{X},\quad \d>0,
\end{equation}
where $\asymp$ denotes a two-sided inequality with positive constants independent of $f$ and $\d$.

Following~\cite{KL23}, for $f\in X$, $s\in \N$, and $h>0$, we introduce the "semi-discrete" modification of the Peetre $K$-functional by
\begin{equation}\label{kf}
  K_s(f,h)_{X_n}:=\inf_{g\in X^s}\(\|f-g\|_{X}+\|f-g\|_{X_n}+h^s\|g^{(s)}\|_X\).
\end{equation}

In the case $X=L_p(\T)$, it was proved in~\cite{KL23} that, for all
$f\in L_p(\T)$, $1\le p<\infty$, $r,s\in \N$, and $2r\ge s$,  the following relation holds:
  \begin{equation*}
     \mathcal{K}_s(f,1/n)_{X_{n,p}}\asymp \w_s(f,1/n)_{L_p}+\|(I-A_{1/n})^r f\|_{X_{n,p}}.
  \end{equation*}
Below we extend this result to the case of a general Banach lattice $X$.

\begin{theorem}\label{thK}
 Let $X$ be a Banach lattice satisfying Assumption~A, and let $r, s \in \N$ with $2r\ge s$.
 Then, for all $f\in X$, we have
  \begin{equation*}
    K_s(f,1/n)_{X_n}\asymp \|(I-\dot A_{\g/n})^s f\|_{X}+\|(I-A_{\g/n})^r f\|_{X_n}
  \end{equation*}
  where $\asymp$ denotes a two-sided inequality with constants independent of $f$ and $n$.
\end{theorem}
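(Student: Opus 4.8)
The plan is to establish the two-sided inequality
$$
K_s(f,1/n)_{X_n}\asymp \|(I-\dot A_{\g/n})^s f\|_X+\|(I-A_{\g/n})^r f\|_{X_n}
$$
by treating the two directions separately. For the upper estimate $K_s(f,1/n)_{X_n}\le C(\dots)$, the natural candidate for the competitor $g\in X^s$ in the infimum \eqref{kf} is a trigonometric polynomial $T\in\mathcal{T}_n$ with $\|f-T\|_X\le 2E_n(f)_X$. With this choice, $\|f-g\|_X=\|f-T\|_X\le 2E_n(f)_X\le C\|(I-\dot A_{\g/n})^s f\|_X$ by Lemma~\ref{lemm2}(i); the term $\|f-g\|_{X_n}=\|f-T\|_{X_n}$ is bounded by the right-hand side exactly via Lemma~\ref{lemt} (this is precisely the lemma's content, so no new work is needed); and $n^{-s}\|g^{(s)}\|_X=n^{-s}\|T^{(s)}\|_X\le C\|(I-\dot A_{\g/n})^s f\|_X$ by the last assertion of Lemma~\ref{lemm3}. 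Summing these three bounds gives the upper estimate.

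For the lower estimate, I would fix an arbitrary $g\in X^s$ and bound each of the two terms on the right-hand side by $C\,K_s(f,1/n)_{X_n}$, then take the infimum over $g$. For the first term, write $(I-\dot A_{\g/n})^s f=(I-\dot A_{\g/n})^s(f-g)+(I-\dot A_{\g/n})^s g$; the first piece is $\le C\|f-g\|_X$ by Lemma~\ref{lemm1}(i), and the second is $\le Cn^{-s}\|g^{(s)}\|_X$ by Lemma~\ref{lemm1}(iv). For the second term, decompose similarly: $(I-A_{\g/n})^r f=(I-A_{\g/n})^r(f-g)+(I-A_{\g/n})^r g$. The $X_n$-semi-norm of the first piece must be controlled by $\|f-g\|_X$ and $\|f-g\|_{X_n}$ — here one uses $g_h$ notation from \eqref{fh}: since $(I-A_{\g/n})^r(f-g)=(f-g)-(f-g)_h$, the term $\|(f-g)-(f-g)_h\|_{X_n}\le \|f-g\|_{X_n}+\|(f-g)_h\|_{X_n}$, and $\|(f-g)_h\|_{X_n}\le C\|f-g\|_X$ by Lemma~\ref{bxn} (each $A_h^k$ is handled by Lemma~\ref{bxn} after noting $A_h^k$ is an $A_{\cdot}$-type average; more carefully, $A_h^k|f-g|\le C A_{kh}|f-g|$ pointwise, so Lemma~\ref{bxn} applies with $kh\le r\g/n$). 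For the second piece, $(I-A_{\g/n})^r g$ is smooth; by Lemma~\ref{lemm1}(iv) with $f$ replaced by $g$ and appropriate $s$, one has $\|(I-A_{\g/n})^r g\|_X\le Cn^{-s}\|g^{(s)}\|_X$ when $2r\ge s$ (splitting $2r=s+(2r-s)$ and using that $(I-A_{\g/n})^{r-\lceil s/2\rceil}$ is a bounded operator), and then Lemma~\ref{lemmz0} converts the $X$-norm to the $X_n$-semi-norm since $(I-A_{\g/n})^r g$ need not be a polynomial — so instead one should first pass through a polynomial approximant of $g^{(s)}$, or directly invoke that the $X_n$-semi-norm of a function is comparable to a Steklov-average bound; the cleanest route is $\|(I-A_{\g/n})^r g\|_{X_n}\le C n^{-s}\|g^{(s)}\|_X$ via Lemma~\ref{bxn} applied to $A_h^k g$ together with Lemma~\ref{lemm1}(iv).

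The main obstacle I anticipate is the passage from $X$-norms to the $X_n$-semi-norm for functions that are \emph{not} trigonometric polynomials, since Lemma~\ref{lemmz0} only applies to $\vp\in\Phi_n$. The smooth-competitor term $(I-A_{\g/n})^r g$ in the lower estimate is the delicate point: one cannot directly apply the Marcinkiewicz--Zygmund inequality to it. The resolution is to exploit that $(I-A_{\g/n})^r g = ((I-A_{\g/n})^{r-1}g)_*$ is obtained from $g$ via iterated Steklov averages, so its sampled values at the nodes $x_k$ are dominated pointwise by $A_{c/n}$-averages of $|g|$ or, after using Lemma~\ref{lemm1}(iv), by $n^{-s}$ times $A_{c/n}$-averages of $|g^{(s)}|$; Lemma~\ref{bxn} then yields $\|(I-A_{\g/n})^r g\|_{X_n}\le Cn^{-s}\|g^{(s)}\|_X$ without ever invoking a polynomial structure. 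Everything else is a routine combination of the auxiliary lemmas, paralleling the $L_p$ argument of~\cite{KL23} but with the Steklov-average machinery of Lemma~\ref{lemm1} replacing explicit Fourier-analytic computations.
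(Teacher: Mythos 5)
Your upper estimate coincides with the paper's proof (near-best $T\in\mathcal T_n$ as competitor, Lemma~\ref{lemm2}(i), Lemma~\ref{lemt}, and the last assertion of Lemma~\ref{lemm3}), and your splitting $f=(f-g)+g$ in the lower estimate, with Lemma~\ref{lemm1} for the $X$-norm term and Lemma~\ref{bxn} for $\|(f-g)_h\|_{X_n}$, also matches the paper. The problem is the step you yourself single out as delicate, namely
\begin{equation*}
\|(I-A_{\g/n})^r g\|_{X_n}\le Cn^{-s}\|g^{(s)}\|_X ,
\end{equation*}
which is the real content of the lower bound. Your justification does not hold as written: Lemma~\ref{lemm1}(iv) is an inequality between $X$-norms, so it cannot "dominate pointwise" the sampled values $(I-A_{\g/n})^r g(x_k)$ by $n^{-s}$ times Steklov averages of $|g^{(s)}|$; and applying Lemma~\ref{bxn} to the terms $A_h^k g$ of $g_h$ only gives $\|g_h\|_{X_n}\le C\|g\|_X$, i.e.\ no factor $n^{-s}$ at all. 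What your route actually needs is a genuine pointwise estimate of the type $|(I-A_h)^r g(x)|\le C h^{s} A_{rh}|g^{(s)}|(x)$, which is true for $g\in X^s$ with $2r\ge s$ (Taylor expansion of order $s$ plus the fact that the measure representing $(I-A_h)^r$ is supported on $[-rh/2,rh/2]$ and annihilates polynomials of degree $<2r$), but none of the lemmas you cite provides it and you never prove it; the alternative suggestions you float (passing through a polynomial approximant of $g^{(s)}$, or "directly invoking" a comparability of $\|\cdot\|_{X_n}$ with Steklov averages) are likewise left unexecuted. So, as written, the decisive inequality is asserted rather than proved.

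For comparison, the paper handles exactly this step without any pointwise analysis of $g$: it sandwiches $g$ between one-sided approximants $q\le g\le Q$ in $\mathcal T_n$, uses Lemma~\ref{lemder} to get $\|Q-q\|_X\le 2\widetilde E_n(g)_X\le Cn^{-s}\|g^{(s)}\|_X$, and then applies the Marcinkiewicz--Zygmund inequality of Lemma~\ref{lemmz0} to the trigonometric polynomial $Q-q_h$ (note $q_h\in\mathcal T_n$ since $A_h$ is a Fourier multiplier on $\mathcal T_n$), estimating $\|q-q_h\|_X$ by Lemmas~\ref{lemm1} and~\ref{lemder}. If you supply the pointwise Taylor-type estimate indicated above, your argument becomes a legitimate and arguably more direct alternative to this one-sided-approximation detour; without it, there is a gap precisely at the heart of the lower estimate.
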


\begin{proof}
  Let $T\in \mathcal{T}_n$ be such that $\|f-T\|_X\le 2E_n(f)_X$. Using Lemmas~\ref{lemm2}(i), \ref{lemt}, and~\ref{lemm3}, we obtain
  \begin{equation*}
  \begin{split}
     K_s(f,1/n)_{X_n}&\le \|f-T\|_{X}+\|f-T\|_{X_n}+n^{-s}\|T^{(s)}\|_X\\
     &\le C\(\|(I-\dot A_{\g/n})^s f\|_{X}+\|(I-A_{\g/n})^r f\|_{X_n}\).
  \end{split}
  \end{equation*}

We now prove the lower estimate. Let $f_h$ and $h$ be defined by~\eqref{fh}.  For $g\in X^s$, equivalence~\eqref{k1} yields
\begin{equation}\label{kk1}
  \begin{split}
     \|(I-\dot A_{\g/n})^s f\|_{X}\le C\(\|f-g\|_X+n^{-s}\|g^{(s)}\|_X\),
  \end{split}
\end{equation}
while Lemma~\ref{bxn} provides
\begin{equation}\label{kk2}
  \begin{split}
      \|(I-A_{\g/n})^r f\|_{X_n}&=\|f-f_h\|_{X_n}\le \|f-g\|_{X_n}+\|g-g_h\|_{X_n}+\|g_h-f_h\|_{X_n}\\
      &\le \|f-g\|_{X_n}+\|g-g_h\|_{X_n}+C\|f-g\|_{X}.
  \end{split}
\end{equation}
It follows from \eqref{kk1} and \eqref{kk2} that to prove the theorem, it suffices to show
\begin{equation}\label{kk4}
  \|g-g_h\|_{X_n}\le Cn^{-s}\|g^{(s)}\|_X.
\end{equation}
Let $q,Q\in \mathcal{T}_n$ be such that $q(x)\le g(x)\le Q(x)$ and $\|Q-q\|_X\le 2\widetilde{E}_n(g)_X$.
Applying Lemma~\ref{lemmz0}, we obtain
\begin{equation*}
  \begin{split}
     \|g-g_h\|_{X_n}&\le \|Q-q_h\|_{X_n}\le C\|Q-q_h\|_{X}\le C\(\|Q-q\|_{X}+\|q-q_h\|_{X}\).
  \end{split}
\end{equation*}
Lemma~\ref{lemder} yields
\begin{equation*}
  \begin{split}
     \|Q-q\|_{X}\le 2 \widetilde{E}_n(g)_X \le Cn^{-s}\|g^{(s)}\|_X.
  \end{split}
\end{equation*}
At the same time, from Lemmas~\ref{lemm1}(iii), (i), (iv) and~\ref{lemder}, we derive
\begin{equation*}
  \begin{split}
     \|q-q_h\|_{X}&=\|(I-A_{\g/n})^r q\|_{X}\\
     &\le C\|g-q\|_X+\|(I- A_{\g/n})^r g\|_{X}\\
     &\le C\(\|g-q\|_X+\|(I-\dot A_{\g/n})^s g\|_{X}\)\\
     &\le C\(\widetilde{E}_n(g)_X+n^{-s}\|g^{(s)}\|_X\)\le Cn^{-s}\|g^{(s)}\|_X,
  \end{split}
\end{equation*}
which together with the above two formulas yields~\eqref{kk4}.
Now, combining~\eqref{kk1}, \eqref{kk2}, and~\eqref{kk4}, we obtain
  \begin{equation*}
    \|(I-A_{\g/n})^r f\|_{X_n}+\|(I-\dot A_{\g/n})^s f\|_{X}\le C\(\|f-g\|_{X}+\|f-g\|_{X_n}+n^{-s}\|g^{(s)}\|_X\).
  \end{equation*}
To complete the proof, it remains to take the infimum over all $g\in X^s$.
\end{proof}

It is known (see, e.g.,~\cite{HI90}) that in $L_p(\T)$, $1\le p<\infty$, the modulus of smoothness is equivalent to the realization of the $K$-functional:
\begin{equation}\label{Re}
  \w_s(f,1/n)_p\asymp \|f-T\|_p+n^{-s}\|T^{(s)}\|_p,
\end{equation}
where the polynomial $T\in \mathcal{T}_n$ satisfies $\|f-T\|_p\le C\w_s(f,1/n)_p$.
This holds, for example, for polynomials of near best approximation, de la Vall\'ee Poussin means, Riesz means, etc.
For various applications of the realizations of $K$-functionals see, e.g.,~\cite{HI90}, \cite{KT20}--\cite{KT21}.
Below, we give an analogue of equivalence~\eqref{Re} for the sampling operator $G_n$ in a general Banach lattice $X$ satisfying Assumption~A. As a realization of the $K$-functional~\eqref{kf}, we consider the following expression defined using the operators $G_n$:
$$
R_s(f,h)_{X_n}:=\|f-G_n f\|_X+\|f-G_n f\|_{X_n}+h^{s}\|(G_n f)^{(s)}\|_X.
$$

\begin{theorem}\label{thR}
  Let $X$ be a Banach lattice satisfying Assumption~A, and let $r, s \in \N$ with $2r\ge s$. Assume that $G_n:X_n\to \mathcal{T}_{n}$ satisfies assumptions~\eqref{a1} and~\eqref{a3}. Then, for all $f\in X$, we have
  \begin{equation}\label{r0}
  \begin{split}
     R_s(f,1/n)_{X_n}\asymp \|(I-A_{\g/n})^r f\|_{X_n}+\|(I-\dot A_{\g/n})^s f\|_{X},
  \end{split}
  \end{equation}
  where $\asymp$ denotes a two-sided inequality with constants independent of $f$ and $n$.
\end{theorem}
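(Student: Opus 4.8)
The plan is to prove the two-sided estimate~\eqref{r0} by establishing each inequality separately, relying on the results already obtained for the sampling operator $G_n$.

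First I would prove the \emph{upper estimate} $R_s(f,1/n)_{X_n}\le C(\|(I-A_{\g/n})^r f\|_{X_n}+\|(I-\dot A_{\g/n})^s f\|_{X})$. The first two terms of $R_s(f,1/n)_{X_n}$, namely $\|f-G_n f\|_X+\|f-G_n f\|_{X_n}$, are bounded by the right-hand side directly by Theorem~\ref{thst} (inequalities~\eqref{st1} and~\eqref{st1t}), since the hypothesis $G_n:X_n\to\mathcal{T}_n$ together with~\eqref{a1} and~\eqref{a3} is exactly what is needed there. For the remaining term $n^{-s}\|(G_n f)^{(s)}\|_X$, I would pick $T\in\mathcal{T}_n$ with $\|f-T\|_X\le 2E_n(f)_X$ and write $(G_n f)^{(s)}=(G_n f-T)^{(s)}+T^{(s)}$. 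Since $G_n f-T\in\mathcal{T}_n$, Lemma~\ref{lemm3}(i) gives $n^{-s}\|(G_n f-T)^{(s)}\|_X\le C\|G_n f-T\|_X\le C(\|f-G_n f\|_X+\|f-T\|_X)$, and the first part of the upper estimate plus Lemma~\ref{lemm2}(i) controls this. The term $n^{-s}\|T^{(s)}\|_X$ is bounded by $C\|(I-\dot A_{\g/n})^s f\|_X$ via the last assertion of Lemma~\ref{lemm3}. Combining these gives the upper estimate.

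Next I would prove the \emph{lower estimate} $\|(I-A_{\g/n})^r f\|_{X_n}+\|(I-\dot A_{\g/n})^s f\|_{X}\le C\,R_s(f,1/n)_{X_n}$. The natural approach is to apply the $K$-functional equivalence of Theorem~\ref{thK} with the specific competitor $g=G_n f\in X^s$ (this is legitimate since $G_n f$ is a trigonometric polynomial, hence smooth). Indeed, by the definition of the semi-discrete $K$-functional~\eqref{kf},
\begin{equation*}
K_s(f,1/n)_{X_n}\le \|f-G_n f\|_X+\|f-G_n f\|_{X_n}+n^{-s}\|(G_n f)^{(s)}\|_X=R_s(f,1/n)_{X_n},
\end{equation*}
and then Theorem~\ref{thK} yields $\|(I-\dot A_{\g/n})^s f\|_X+\|(I-A_{\g/n})^r f\|_{X_n}\le C\,K_s(f,1/n)_{X_n}\le C\,R_s(f,1/n)_{X_n}$. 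This completes the proof.

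The main point requiring care is to verify that all the structural hypotheses needed by the cited results are indeed in force: Theorem~\ref{thst}'s second assertion~\eqref{st1t} requires precisely $G_n:X_n\to\mathcal{T}_n$ together with~\eqref{a1} and~\eqref{a3}, which are the hypotheses of the present theorem; Theorem~\ref{thK} requires only Assumption~A and $2r\ge s$, with no hypotheses on $G_n$ at all, so using $g=G_n f$ as a competitor in its $K$-functional is unconditionally valid. Thus no real obstacle arises beyond bookkeeping; the proof is essentially an assembly of Theorems~\ref{thst} and~\ref{thK} together with the Bernstein-type inequalities of Lemma~\ref{lemm3}. I would note in passing that the lower estimate does \emph{not} need assumption~\eqref{a2} (nor~\eqref{a4}), which is consistent with the fact that~\eqref{r0} is then used, in the proof of Theorem~\ref{thC}, as the input to which~\eqref{a4} and~\eqref{a1} are subsequently applied to remove the derivative term.
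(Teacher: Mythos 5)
Your proof is correct and follows essentially the same route as the paper: the upper estimate is assembled from Theorem~\ref{thst} together with the Bernstein-type inequalities of Lemma~\ref{lemm3}, and the lower estimate is obtained by taking $g=G_nf$ as a competitor in the $K$-functional~\eqref{kf} and invoking Theorem~\ref{thK}. The only cosmetic difference is that the paper controls $n^{-s}\|(G_nf)^{(s)}\|_X$ by applying Lemma~\ref{lemm3}(ii) directly to $G_nf$ and then Lemma~\ref{lemm1}(i), whereas you split off a near-best approximant $T$ and use Lemma~\ref{lemm3}(i) together with the last assertion of Lemma~\ref{lemm3}; both variants are equally valid.
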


\begin{proof}
Applying Lemmas~\ref{lemm3}(ii) and~\ref{lemm1}(i), we obtain
\begin{equation*}\label{rr2}
  \begin{split}
    n^{-s}\|(G_n f)^{(s)}\|_X&\le C\|(I-\dot A_{\g/n})^s G_n f\|_{X}\\
    &\le C(\|f-G_n f\|_X+\|(I-\dot A_{\g/n})^s f\|_{X}).
  \end{split}
\end{equation*}
Combining this inequality with the estimates of $\|f-G_n f\|_X$ and $\|f-G_n f\|_{X_n}$ from Theorem~\ref{thst}, we prove the upper inequality in~\eqref{r0}.

The lower estimate in~\eqref{r0} follows directly from Theorem~\ref{thK}.
\end{proof}

\subsection{Comparison of the errors $\|f-G_n f\|_X$ and $\|f-G_n f\|_{X_n}$}\label{sec_comp}
In several results above, we obtain error estimates for the combined quantity $\|f-G_n f\|_X+\|f-G_n f\|_{X_n}$ whereas in some other cases the estimates were derived only for $\|f-G_n f\|_X$. If $G_n$ is an interpolation operator at the nodes $(x_k)_{k=1}^{m_n}$, then
$\|f-G_n f\|_{X_n}=0$ for any function $f\in X$ and hence the inequality $\|f-G_n f\|_{X_n}\le\|f-G_n f\|_X$ is trivially satisfied. However, for general sampling operators the quantities $\|f-G_n f\|_X$  and $\|f-G_n f\|_{X_n}$ are not comparable. Indeed, by choosing any nontrivial function $f_n\in X_n$ such that $f_n(x_k)=0$ for all $k=1,\dots,m_n$ we obtain that inequality $\|f-G_n f\|_X\le C\|f-G_n f\|_{X_n}$ cannot hold for any constant $C>0$ independent of $n$. To show that the opposite inequality also fails, we need the following proposition.

\begin{proposition}\label{prex}
  Let $X$ be a translation-invariant Banach lattice with an absolute continuous norm, i.e., $\phi_X(t):=\|\chi_{[0,t]}\|_X\to 0$ as $t\to 0$, and let
  $$
  G_n f(x)=\frac{1}{2n+1}\sum_{j=0}^{2n} f(t_j)K_n(x-t_j),\quad t_j=\frac{2\pi j}{2n+1},
  $$
  where the kernel is given by
  $$
  K_n(x)=\sum_{k=-n}^n c_k e^{ikx}\quad\text{with}\quad c_{k_0}=0 \quad\text{for some}\quad k_0\in \{-n,\dots,n\}.
  $$
  Then the inequality
  \begin{equation}\label{prex1+}
    \|f-G_n f\|_{X_n}\le C\|f-G_n f\|_{X}
  \end{equation}
 does not hold with the constant $C$ independent of $n$. Here, $X_n$ is defined with respect to the points $(t_j)_{j=0}^{2n}$.
\end{proposition}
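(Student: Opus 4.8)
The plan is to produce, for every $n$, a function $f=f_n\in X$ that $G_n$ annihilates, $G_nf_n=0$, while $\|f_n\|_{X_n}$ stays bounded below by a fixed positive constant and $\|f_n\|_X$ can be made arbitrarily small; since then $f_n-G_nf_n=f_n$, this refutes \eqref{prex1+} with any $C$ independent of $n$. The mechanism is the single vanishing coefficient: the sample vector $\bigl(e^{ik_0t_j}\bigr)_{j=0}^{2n}$ is an eigenvector, with eigenvalue $c_{k_0}=0$, of the reconstruction map $(a_j)_j\mapsto\frac1{2n+1}\sum_j a_jK_n(\cdot-t_j)$. Indeed, inserting $K_n(x-t_j)=\sum_{k=-n}^n c_ke^{ikx}e^{-ikt_j}$ and using the discrete orthogonality $\frac1{2n+1}\sum_{j=0}^{2n}e^{i(k_0-k)t_j}=\delta_{k,k_0}$ — which holds because $0<|k_0-k|\le 2n<2n+1$ for $k\neq k_0$ — one obtains $\frac1{2n+1}\sum_j e^{ik_0t_j}K_n(\cdot-t_j)=c_{k_0}e^{ik_0(\cdot)}=0$. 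Hence \emph{any} $f\in X$ with $f(t_j)=e^{ik_0t_j}$ for all $j$ satisfies $G_nf=0$.

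Next I would fix $\delta\in\bigl(0,\pi/(2n+1)\bigr)$, so that the arcs $I_j^\delta:=(t_j-\delta,t_j+\delta)$, $j=0,\dots,2n$, are pairwise disjoint, and set
\[
f_{n,\delta}(x):=e^{ik_0x}\,\chi_{E_{n,\delta}}(x),\qquad E_{n,\delta}:=\bigcup_{j=0}^{2n}I_j^\delta .
\]
(If a continuous example is preferred one may instead multiply $e^{ik_0x}$ by $\sum_j\eta\bigl((x-t_j)/\delta\bigr)$ for a fixed continuous bump $\eta$ with $\eta(0)=1$, $0\le\eta\le1$, $\supp\eta\subset(-1,1)$; the estimates below are unaffected.) Since $|f_{n,\delta}|\le\chi_{\T}$ we have $f_{n,\delta}\in X$, and $f_{n,\delta}(t_j)=e^{ik_0t_j}$, so by the previous paragraph $G_nf_{n,\delta}=0$ and $f_{n,\delta}-G_nf_{n,\delta}=f_{n,\delta}$.

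It remains to compare the two norms of $f_{n,\delta}$. Because $|f_{n,\delta}|=\chi_{E_{n,\delta}}$, subadditivity of the lattice norm together with translation invariance give
\[
\|f_{n,\delta}-G_nf_{n,\delta}\|_X=\|\chi_{E_{n,\delta}}\|_X\le\sum_{j=0}^{2n}\|\chi_{I_j^\delta}\|_X=(2n+1)\,\|\chi_{(-\delta,\delta)}\|_X\le(2n+1)\,\phi_X(2\delta).
\]
On the other hand $|f_{n,\delta}(t_l)|=1$ for every $l$, and the arcs $[t_l,t_{l+1})$ partition $\T$, whence $\|f_{n,\delta}-G_nf_{n,\delta}\|_{X_n}=\bigl\|\sum_{l=0}^{2n}\chi_{[t_l,t_{l+1})}\bigr\|_X=\|\mathbf 1\|_X=:\kappa>0$, a constant independent of $n$ and $\delta$. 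Therefore
\[
\frac{\|f_{n,\delta}-G_nf_{n,\delta}\|_{X_n}}{\|f_{n,\delta}-G_nf_{n,\delta}\|_X}\ \ge\ \frac{\kappa}{(2n+1)\,\phi_X(2\delta)},
\]
and since $\phi_X(2\delta)\to0$ as $\delta\to0^+$ this ratio can be made as large as we wish (already for $n=1$) by choosing $\delta$ small. Hence no constant $C$ can make \eqref{prex1+} hold uniformly in $n$. The only substantive step is the identity $G_nf_{n,\delta}=0$, which relies on $c_{k_0}=0$ and on the orthogonality of the characters over the node set $\{t_j\}$; the norm estimates are routine consequences of translation invariance, subadditivity of $\|\cdot\|_X$, and the absolute continuity of the norm. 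Note finally that the example collapses (and \eqref{prex1+} holds trivially with $C=1$) precisely when $G_n$ interpolates, i.e.\ when no $c_k$ vanishes.
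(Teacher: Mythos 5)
Your construction is correct and is essentially the paper's own: functions concentrated near the nodes with sample values $e^{ik_0 t_j}$, annihilated by $G_n$ through the discrete orthogonality of the exponentials and $c_{k_0}=0$, whose $X_n$-seminorm equals $\|\chi_{[0,2\pi)}\|_X$ while the $X$-norm is bounded by $(2n+1)$ times a small value of $\phi_X$ via translation invariance and absolute continuity of the norm. The only difference is organizational: the paper uses smooth bumps of width $w_n\le s_n$ with $\phi_X(s_n)\le (2n+1)^{-2}$ and sends $n\to\infty$, whereas you fix $n$ and shrink the spike width $\delta\to 0$; your variant in fact yields the slightly stronger conclusion that for each fixed $n$ whose kernel has a vanishing coefficient no constant $C_n$ works at all, which immediately implies the stated failure of an $n$-independent constant.
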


\begin{proof}
  Since $\phi_X(t)\to 0$ as $t\to 0$, for every $n$ there exists $s_n \in (0,1)$ such that $\phi_X(s_n)\le (2n+1)^{-2}$.
  Let $\psi\in C^\infty(\T)$ be a fixed nonnegative bump function with $\psi(0)=1$ and $\supp \psi \subset (-\tfrac12,\tfrac12)$.
  For every $j$ and $n$, we define
  $$
    \psi_{n,j}(x):=\psi\!\Big(\frac{x-t_j}{w_n}\Big),\quad w_n=\min\left\{s_n, \frac{\pi}{2n+1}\right\},
  $$
   so that the supports of $\psi_{n,j}$ are pairwise disjoint for distinct $j$.
Define the sequence of functions
  $$
    f_n(x):=\sum_{j=0}^{2n} e^{i k_0 t_j}\,\psi_{n,j}(x).
  $$
  By construction, we have that
  $f_n(t_j)=e^{i k_0 t_j}$, $j=0,\dots,2n$,
  which yields
  $$
    \|f_n\|_{X_n}
    = \Big\|\sum_{j=0}^{2n} \chi_{[t_j,t_{j+1})}\Big\|_X
    = \|\chi_{[0,2\pi)}\|_X=:C_X>0.
  $$
  Further, since $|\psi_{n,j}|\le\|\psi\|_\infty\chi_{\operatorname{supp}\psi_{n,j}}$ and $X$ is translation-invariant, we get
  $$
    \|\psi_{n,j}\|_X \le \|\psi\|_\infty \,\|\chi_{\operatorname{supp}\psi_{n,j}}\|_X = \|\psi\|_\infty\,\phi_X(w_n),
  $$
  and hence by the triangle inequality, we obtain
  \begin{equation}\label{fn0}
    \|f_n\|_X \le \sum_{j=0}^{2n} \|\psi_{n,j}\|_X
    \le (2n+1) \|\psi\|_\infty\,\phi_X(w_n)\to 0\quad\text{as } n\to \infty.
  \end{equation}
  At the same time, by linearity, we have
  $$
    G_n f_n(x)
    = \sum_{k=-n}^n c_k e^{ikx}\Big(\frac{1}{2n+1}\sum_{r=0}^{2n} e^{i(k_0-k)t_r}\Big)
    = c_{k_0} e^{i k_0 x}=0.
  $$
   Summarizing the above estimates, we get $\|f_n-G_n f_n\|_{X_n}=\|f_n\|_{X_n}=C_X>0$ for all $n$, whereas~\eqref{fn0} gives
   $\|f_n-G_n f_n\|_X=\|f_n\|_X\to0$. Thus, for any fixed constant $C>0$ there exists $n$ large enough such that
  $$
    \|f_n-G_n f_n\|_{X_n} = C_X > C \|f_n-G_n f_n\|_X,
  $$
  which shows that inequality \eqref{prex1+} cannot hold with a constant $C$ independent of $n$.
\end{proof}

\subsection{Estimates involving the errors of best and best one-sided approximations}

In this section, we extend to general Banach lattices several standard error estimates for sampling operators, using the errors of best and best one-sided approximations. These estimates can be useful when the behaviour of $E_n(f)_X$ and $\widetilde{E}_n(f)_X$ is known for a particular function $f$. However, in general, such estimates are not sharp and apply only to rather restricted classes of functions, compared with the results obtained above in terms of the semi-discrete moduli of smoothness $\|(I-A_{\g/n})^r f\|_{X_n}+\|(I-\dot A_{\g/n})^s f\|_{X}$.

\begin{theorem}\label{thG1}
Let $X$ be a Banach lattice satisfying Assumption~A. For all $f\in B$, the following statements hold:
\begin{itemize}
  \item[(i)]  If $G_n$ satisfies~\eqref{a1} and~\eqref{a3} for some $s\in \N$, then
\begin{equation}\label{g1}
  \|f-G_n f\|_{X}\le C \(\widetilde{E}_n(f)_{X}+\|(I-\dot A_{\g/n})^s f\|_X\).
\end{equation}
  \item[(ii)] If $G_n$ satisfies~\eqref{a1} and  $G_n T = T$ for all $T\in \mathcal{T}_n$, then
\begin{equation}\label{gg1}
  \|f-G_n f\|_{X}\le C \widetilde{E}_n(f)_{X}.
\end{equation}
\end{itemize}
In both estimates, the constant $C$ is independent of $f$ and $n$.
\end{theorem}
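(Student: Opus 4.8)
The plan is to compare $f$ with a near-optimal one-sided trigonometric approximant and then reduce both assertions to facts already at our disposal. First I would fix $n$ and choose $q_n,Q_n\in\mathcal{T}_n$ with $q_n\le f\le Q_n$ on $\T$ and $\|Q_n-q_n\|_X\le 2\widetilde{E}_n(f)_X$; such polynomials exist since $f\in B(\T)$ forces $\widetilde{E}_n(f)_X<\infty$ (compare $f$ with the constants $\inf f$ and $\sup f$, which lie in $\mathcal{T}_n$). From the pointwise squeeze $0\le f-q_n\le Q_n-q_n$ and lattice axiom~$1)$ one gets immediately $\|f-q_n\|_X\le\|Q_n-q_n\|_X\le 2\widetilde{E}_n(f)_X$; evaluating the same squeeze at the sampling nodes $(x_k)$ and applying Lemma~\ref{lemmz0} with $\Phi_n=\mathcal{T}_n$ (legitimate because $X$ satisfies Assumption~A) gives in addition $\|f-q_n\|_{X_n}\le\|Q_n-q_n\|_{X_n}\le C\|Q_n-q_n\|_X\le C\widetilde{E}_n(f)_X$. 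I would also record the trivial bound $E_n(f)_X\le\widetilde{E}_n(f)_X$.

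Part~(ii) is then immediate: since $G_nq_n=q_n$, splitting $\|f-G_nf\|_X\le\|f-q_n\|_X+\|G_n(q_n-f)\|_X$ and estimating the second term through~\eqref{a1} by $K_1\|q_n-f\|_{X_n}\le C\widetilde{E}_n(f)_X$ yields~\eqref{gg1}. For part~(i) the same splitting acquires an extra term $\|q_n-G_nq_n\|_X$, which~\eqref{a3} bounds by $K_3n^{-s}\|q_n^{(s)}\|_X$; so the whole proof reduces to showing $n^{-s}\|q_n^{(s)}\|_X\le C(\widetilde{E}_n(f)_X+\|(I-\dot A_{\g/n})^s f\|_X)$. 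For this I would introduce a near-best \emph{two-sided} approximant $T\in\mathcal{T}_n$ with $\|f-T\|_X\le 2E_n(f)_X$, apply the Bernstein inequality Lemma~\ref{lemm3}(i) to $q_n-T\in\mathcal{T}_n$ to get $\|(q_n-T)^{(s)}\|_X\le Cn^s\|q_n-T\|_X\le Cn^s\widetilde{E}_n(f)_X$ (the last step via $\|q_n-T\|_X\le\|q_n-f\|_X+\|f-T\|_X$ together with the bounds from the first paragraph), and apply the final assertion of Lemma~\ref{lemm3} to get $\|T^{(s)}\|_X\le Cn^s\|(I-\dot A_{\g/n})^s f\|_X$. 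Adding these two estimates and collecting the pieces yields~\eqref{g1}.

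The one step that is not mere bookkeeping is the passage from the one-sided pair $(q_n,Q_n)$ to the bound for $\|q_n^{(s)}\|_X$: the Bernstein--Nikolskii--Stechkin type estimates of Lemma~\ref{lemm3} involving $\|(I-\dot A_{\g/n})^s f\|_X$ are formulated for near-best approximants, not for one-sided ones, and hence cannot be applied to $q_n$ directly; I expect to circumvent this by routing through the auxiliary polynomial $T$ and paying the harmless price $\|q_n-T\|_X\le C\widetilde{E}_n(f)_X$. All remaining manipulations use only the lattice axioms, assumptions~\eqref{a1} and~\eqref{a3}, and the discretization Lemma~\ref{lemmz0}.
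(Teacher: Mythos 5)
Your proof is correct, and its skeleton coincides with the paper's: choose a near-best one-sided pair $(q_n,Q_n)$, use the three-term splitting $\|f-G_nf\|_X\le\|f-q_n\|_X+\|q_n-G_nq_n\|_X+\|G_n(q_n-f)\|_X$, control the last term via~\eqref{a1} and the discretization $\|f-q_n\|_{X_n}\le\|Q_n-q_n\|_{X_n}\le C\|Q_n-q_n\|_X$ from Lemma~\ref{lemmz0}, and handle (ii) by noting that the middle term vanishes. The only place you deviate is the estimate of $n^{-s}\|q_n^{(s)}\|_X$, and here your worry is based on a misreading of Lemma~\ref{lemm3}: part (ii) of that lemma is a Nikolskii--Stechkin--Boas inequality valid for \emph{every} $T\in\mathcal{T}_n$, with the smoothness of the polynomial itself, $\|(I-\dot A_{\g/n})^s T\|_X$, on the right-hand side; only the final ``Moreover'' clause requires a near-best approximant. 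The paper therefore applies Lemma~\ref{lemm3}(ii) directly to $q_n$ and then passes from $\|(I-\dot A_{\g/n})^s q_n\|_X$ to $\|f-q_n\|_X+\|(I-\dot A_{\g/n})^s f\|_X$ by the triangle inequality and Lemma~\ref{lemm1}(i), which is shorter. Your detour through a near-best two-sided approximant $T$ (Bernstein on $q_n-T$, the ``Moreover'' clause on $T$) is nevertheless a valid substitute: all the bounds you invoke, including $\|q_n-T\|_X\le C\widetilde{E}_n(f)_X$ via $E_n(f)_X\le\widetilde{E}_n(f)_X$, hold under the stated hypotheses, so the argument goes through and yields the same constants up to inessential factors.
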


\begin{proof}
$\textbf{(i)}$ Let $q_n, Q_n\in \mathcal{T}_n$ be such that $q_n(x)\le f(x)\le Q_n(x)$ for all $x\in \T$
and $\|q_n-Q_n\|_{X}\le 2\widetilde{E}_n(f)_{X}$. By~\eqref{a1} and~\eqref{a3}, we obtain
\begin{equation}\label{g2}
  \begin{split}
     \|f-G_n f\|_{X}&\le \|f-q_n\|_X+\|q_n-G_n q_n\|_X+\|G_n(q_n-f)\|_X\\
     &\le 2\widetilde{E}_n(f)_X+K_3n^{-s}\|q_n^{(s)}\|_X+K_1\|f-q_n\|_{X_n}.
   \end{split}
\end{equation}
 Applying Lemmas~\ref{lemm3}(ii) and~\ref{lemm1}(i), we have
\begin{equation}\label{g3}
\begin{split}
    n^{-s}\|q_n^{(s)}\|_X &\le C \|(I-\dot A_{\g/n})^s q_n\|_X\\
  &\le C\(\|(I-\dot A_{\g/n})^s (q_n-f)\|_X+\|(I-\dot A_{\g/n})^s f\|_X\)\\
  &\le C\(\|f-q_n\|_X+\|(I-\dot A_{\g/n})^s f\|_X\).
\end{split}
\end{equation}
Next, by Lemma~\ref{lemmz0}, we get
\begin{equation*}
  \|f-q_n\|_{X_n}\le \|Q_n-q_n\|_{X_n}\le C\|Q_n-q_n\|_X,
\end{equation*}
which together with \eqref{g2} and~\eqref{g3} yields~\eqref{g1}.

\smallskip

$\textbf{(ii)}$ The proof of~\eqref{gg1} follows the same argument  as the proof of estimate~\eqref{g1}.
\end{proof}

\begin{remark}\label{remG}
  Note that inequality~\eqref{gg1} remains valid even without assuming that $X$ satisfies Assumption~A.
  In this case, however, one needs to assume that the Bernstein inequality~\eqref{ber} holds in $X$ for $\Phi_n = \mathcal{T}_n$.
\end{remark}

\begin{corollary}\label{thG1cor}
Let $X$ be a Banach lattice satisfying Assumption~A and $s\in \N$. For all $f\in X^s$, the following statements hold:
\begin{itemize}
  \item[(i)]  If $G_n$ satisfies the assumptions of Theorem~\ref{thG1}(i), then
\begin{equation}\label{l2-}
  \|f-G_nf\|_{X}\le \frac{C}{n^s} \|f^{(s)}\|_{X}.
\end{equation}

  \item[(ii)] If $G_n$ satisfies the assumptions of Theorem~\ref{thG1}(ii), then
\begin{equation}\label{l2}
  \|f-G_nf\|_{X}\le \frac{C}{n^s} {E}_n(f^{(s)})_{X}.
\end{equation}
\end{itemize}
In both estimates, the constant $C$ is independent of $f$ and $n$.
\end{corollary}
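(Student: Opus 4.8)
The plan is to obtain both inequalities as direct consequences of Theorem~\ref{thG1}, once the smoothness quantities appearing there are estimated against $\|f^{(s)}\|_X$ and $E_n(f^{(s)})_X$. The only preliminary point I would check is that $f$ actually belongs to the class $B$ to which Theorem~\ref{thG1} applies: since we are on $\T$ and $s\in\N$, the condition $f\in X^s$ means that $f^{(s-1)}$ is absolutely continuous, so $f\in C^{s-1}(\T)$ and in particular $f$ is bounded, i.e.\ $f\in B$. Thus Theorem~\ref{thG1} is available for such $f$.

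For part (ii), Theorem~\ref{thG1}(ii) gives $\|f-G_nf\|_X\le C\widetilde{E}_n(f)_X$, and Lemma~\ref{lemder} (applied with $r=s$) yields $\widetilde{E}_n(f)_X\le \tfrac{C}{(n+1)^s}E_n(f^{(s)})_X$. Composing these two estimates immediately produces~\eqref{l2}.

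For part (i), Theorem~\ref{thG1}(i) gives $\|f-G_nf\|_X\le C\bigl(\widetilde{E}_n(f)_X+\|(I-\dot A_{\g/n})^s f\|_X\bigr)$. The first term I would bound exactly as in part (ii), now adding the trivial estimate $E_n(f^{(s)})_X\le\|f^{(s)}\|_X$, so that $\widetilde{E}_n(f)_X\le Cn^{-s}\|f^{(s)}\|_X$. For the second term I would invoke Lemma~\ref{lemm1}(iv) (the $\dot A_h$ variant) with $h=\g/n$ and both indices taken equal to $s$, which reduces $(I-\dot A_{\g/n})^{r-s}$ to the identity and gives $\|(I-\dot A_{\g/n})^s f\|_X\le C(\g/n)^s\|f^{(s)}\|_X=Cn^{-s}\|f^{(s)}\|_X$; alternatively, one may use~\eqref{k1} together with the trivial bound $K_s(f,\g/n)_X\le(\g/n)^s\|f^{(s)}\|_X$ (take $g=f$ in the infimum). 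Adding the two bounds gives~\eqref{l2-}.

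I do not anticipate any genuine obstacle here: the statement is a routine corollary of Theorem~\ref{thG1}, Lemma~\ref{lemder}, and Lemma~\ref{lemm1}(iv), and the only step that merits an explicit word is the verification that $f\in X^s$ lies in $B$, which legitimizes applying Theorem~\ref{thG1}.
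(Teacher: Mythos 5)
Your proof is correct and follows essentially the same route as the paper: both parts are deduced from Theorem~\ref{thG1} combined with Lemma~\ref{lemder} (and, for part (i), Lemma~\ref{lemm1}(iv) with $r=s$ to bound $\|(I-\dot A_{\g/n})^s f\|_X$ by $Cn^{-s}\|f^{(s)}\|_X$). Your explicit check that $f\in X^s$ implies $f\in B(\T)$ is a detail the paper leaves implicit, but it does not change the argument.
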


\begin{proof}
$\textbf{(i)}$ Inequality~\eqref{l2-} follows from~\eqref{g1} and Lemmas~\ref{lemder} and~\ref{lemm1}(iv).

$\textbf{(ii)}$ Inequality~\eqref{l2} follows from~\eqref{gg1} and Lemma~\ref{lemder}.
\end{proof}

\begin{corollary}\label{thL1}
Let $X$ be a Banach lattice. Assume that $\sup_n\|S_n\|_{X\to X}<\infty$.
Then, for all $f\in B(\T)$, we have
\begin{equation*}
  \|f-\L_nf\|_{X}\le C \widetilde{E}_n(f)_{X}.
\end{equation*}
If, additionally, $X$ satisfies Assumption~A and $f\in X^r$, then
\begin{equation*}
  \|f-\L_nf\|_{X}\le \frac{C}{n^r} {E}_n(f^{(r)})_{X}.
\end{equation*}
In the above inequalities the constant $C$ is independent of $f$ and $n$.
\end{corollary}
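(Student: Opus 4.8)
The plan is to view $\L_n$ as a particular sampling operator $G_n$ and to deduce the estimates from Theorem~\ref{thG1}(ii) and Corollary~\ref{thG1cor}(ii), after checking that the hypotheses of those results hold for $\L_n$ under the present (weaker) assumption on the partial sums. The two ingredients I would verify first are purely structural: (i) for every $f\in B(\T)$ one has $\L_n f\in\mathcal{T}_n$ and $(\L_n f)(t_k)=f(t_k)$ for $k=0,\dots,2n$ --- immediate from the fact that the interpolation kernel equals $2n+1$ at $0$ and vanishes at the nonzero differences of nodes --- and hence $\L_n T=T$ for all $T\in\mathcal{T}_n$; (ii) the stability inequality \eqref{a1} holds for $\L_n$. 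Point (ii) is the only place where the hypothesis enters: since $\sup_n\|S_n\|_{X\to X}<\infty$, Lemma~\ref{lemmz} gives the Marcinkiewicz--Zygmund comparison $\|T\|_X\asymp\|T\|_{X_n}$ for all $T\in\mathcal{T}_n$, with $X_n$ formed from the equidistant nodes $t_k=\tfrac{2\pi k}{2n+1}$; applying this to $T=\L_n f$ and using (i) yields $\|\L_n f\|_X\le C\|f\|_{X_n}$, which is \eqref{a1}.

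Next I would prove the first inequality by rerunning the proof of Theorem~\ref{thG1}(ii). Fix $f\in B(\T)$ and pick $q_n,Q_n\in\mathcal{T}_n$ with $q_n\le f\le Q_n$ on $\T$ and $\|Q_n-q_n\|_X\le 2\widetilde{E}_n(f)_X$ --- a nonempty family of competitors because $\chi_\T\in X$ and $f$ is bounded. Since $\L_n q_n=q_n$, the consistency term drops out and $f-\L_n f=(f-q_n)-\L_n(f-q_n)$, so it suffices to estimate $\|f-q_n\|_X$ and $\|\L_n(f-q_n)\|_X$. The first is $\le\|Q_n-q_n\|_X$ because $|f-q_n|\le Q_n-q_n$ pointwise. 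For the second, $\L_n(f-q_n)\in\mathcal{T}_n$ has nodal values $f(t_k)-q_n(t_k)\in[0,Q_n(t_k)-q_n(t_k)]$, so by lattice monotonicity and two applications of Lemma~\ref{lemmz} (to $\L_n(f-q_n)$ and then to $Q_n-q_n$) one gets $\|\L_n(f-q_n)\|_X\le C\,\|\sum_k|(Q_n-q_n)(t_k)|\chi_{[t_k,t_{k+1})}\|_X\le C\|Q_n-q_n\|_X$. Adding the two bounds gives $\|f-\L_n f\|_X\le C\widetilde{E}_n(f)_X$. I want to stress that the reproduction property $\L_n T=T$ is exactly what allows one to avoid assumption~\eqref{a3}, the Bernstein inequality~\eqref{ber}, and Assumption~A altogether; the role played by Lemma~\ref{lemmz0} in the general Theorem~\ref{thG1} is here taken over by Lemma~\ref{lemmz}, which is available precisely because $\sup_n\|S_n\|_{X\to X}<\infty$.

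The second inequality I would obtain by combining the first one with Lemma~\ref{lemder}: if, in addition, $X$ satisfies Assumption~A and $f\in X^r$, then $\widetilde{E}_n(f)_X\le C(n+1)^{-r}E_n(f^{(r)})_X$, whence $\|f-\L_n f\|_X\le C\widetilde{E}_n(f)_X\le Cn^{-r}E_n(f^{(r)})_X$; this is exactly Corollary~\ref{thG1cor}(ii) specialized to $G_n=\L_n$. No step here is difficult. The single point that genuinely uses the structure of $X$ is the verification of \eqref{a1}, i.e.\ the two-sided MZ comparison on $\mathcal{T}_n$ at equidistant nodes, which is why the uniform boundedness of $S_n$ --- rather than Assumption~A or boundedness of the maximal operator --- is the natural hypothesis for this statement.
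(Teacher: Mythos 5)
Your proposal is correct and follows essentially the same route as the paper: the paper likewise notes that $\mathcal{L}_nT=T$ for $T\in\mathcal{T}_n$, uses Lemma~\ref{lemmz} (available since $\sup_n\|S_n\|_{X\to X}<\infty$) to get~\eqref{a1}, and then invokes Theorem~\ref{thG1}(ii) with Remark~\ref{remG} for the first bound and, via Lemma~\ref{lemder} (i.e.\ Corollary~\ref{thG1cor}(ii)), obtains the second. Your only deviation is that you re-run the proof of Theorem~\ref{thG1}(ii) explicitly with Lemma~\ref{lemmz} replacing Lemma~\ref{lemmz0}, which just spells out why neither Assumption~A nor the Bernstein inequality is needed for the first estimate.
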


\begin{proof}
It is well-known that $\mathcal{L}_n(T)=T$ for all $T\in \mathcal{T}_n$. Thus, by Lemma~\ref{lemmz}, the operator $\mathcal{L}_n$ satisfies~\eqref{a1} and, hence, applying Theorem~\ref{thG1}(ii) and Remark~\ref{remG}, we prove the corollary.
\end{proof}

To formulate the next theorem, we introduce the dilation operator $\d_r$ acting on functions $f\in X$ by
$$
\d_r f(x)=f(rx),\quad 0<r<1.
$$
An important quantity in what follows is the operator norm $\|\d_{r}\|_{X\to X}$.
In the classical settings of the Lebesgue and Lorentz spaces on the circle,
$X=L_p(\T)$ and
$X=L_{p,q}(\T)$\footnote{The norm in
$L_{p,q}(\mathbb{T})$ is given by
$\|f\|_{L_{p,q}}=\Big(\frac1{2\pi}\int_{\T} (t^{1/p} f^*(t))^q \,\frac{dt}{t}\Big)^{1/q}$ for $1\le q<\infty$ and $\|f\|_{L_{p,\infty}}=\sup_{t\in \T} t^{1/p} f^*(t)$ for $q=\infty$.}, $1\le p<\infty$, $1\le q\le \infty$,
it is straightforward to verify that
$$
\|\d_r\|_{X\to X}=r^{-1/p},\quad 0<r<1.
$$
For the Orlicz spaces $L_\Phi(\T)$ equipped with the Luxemburg norm $\|\cdot\|_{L_\Phi}$\footnote{
The Luxemburg norm is given by
$\|f\|_{L_\Phi}=\inf\{\lambda>0:\frac1{2\pi}\int_{\mathbb{T}}\Phi(|f(x)|/\lambda)\,dx\le 1\}$.}, where $\Phi$ is a  Young function, a direct calculation shows that
$$
\|\d_r\|_{L_\Phi(\T)\to L_\Phi(\T)}=\sup_{t>0}\frac{\Phi^{-1}(t)}{\Phi^{-1}(rt)},
$$
see also~\cite{B71}. For general rearrangement-invariant spaces, some estimates of the norm of the dilation operator can be found, e.g., in~\cite{BS88} and~\cite{KPS82}.

\begin{theorem}\label{thriG}
  Let $X$ be a rearrangement-invariant Banach lattice. Assume that $G_n$ satisfies assumptions~\eqref{a1} and~\eqref{a3} for some $s\in \N$.
Then, for all $f\in X$, we have
  \begin{equation}\label{ri01}
    \|f-G_n f\|_X\le C\(\w_s(f,1/n)_X+\sum_{\nu=1}^\infty \|\d_{2^{-\nu}}\|_{X\to X} E_{2^{\nu-1}n}(f)_X\),
  \end{equation}
where the constant $C$ is independent of $f$ and $n$. Moreover, if additionally, $G_n T=T$ for all $T\in \mathcal{T}_n$, then the above estimate holds without the modulus $\w_s(f,1/n)_X$ on the right-hand side.
\end{theorem}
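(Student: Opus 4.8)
Since $X$ is rearrangement-invariant it is translation-invariant and satisfies Assumption~A (each Steklov average $A_h$ is convolution with an $L_1$-kernel of unit norm, so $\|A_h\|_{X\to X}\le 1$), hence all auxiliary lemmas of Section~2 apply and $\w_s(f,\cdot)_X$ is well defined. The plan is to fix $T_n\in\mathcal T_n$ with $\|f-T_n\|_X\le 2E_n(f)_X$ and split, by linearity of $G_n$,
\begin{equation*}
\|f-G_nf\|_X\le\|f-T_n\|_X+\|G_n(f-T_n)\|_X+\|T_n-G_nT_n\|_X .
\end{equation*}
By Lemma~\ref{lemm2}(i) the first term is $\le 2E_n(f)_X\le C\|(I-\dot A_{\g/n})^sf\|_X$; by \eqref{a3} and the last assertion of Lemma~\ref{lemm3} the third term is $\le K_3n^{-s}\|T_n^{(s)}\|_X\le C\|(I-\dot A_{\g/n})^sf\|_X$; and by \eqref{a1} the second term is $\le K_1\|f-T_n\|_{X_n}$. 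So the theorem reduces to the two estimates
\begin{equation*}
\|(I-\dot A_{\g/n})^sf\|_X\le C\,\w_s(f,1/n)_X
\qquad\text{and}\qquad
\|f-T_n\|_{X_n}\le C\sum_{\nu\ge1}\|\d_{2^{-\nu}}\|_{X\to X}E_{2^{\nu-1}n}(f)_X .
\end{equation*}

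The first estimate uses only translation-invariance: iterating $(I-\dot A_t)g(x)=-t^{-1}\int_0^t\Delta_v^1 g(x)\,dv$ exhibits $(I-\dot A_{\g/n})^sf$ as an average of mixed differences $\Delta_{v_1}^1\cdots\Delta_{v_s}^1f$ with $0\le v_i\le\g/n$, whose $X$-norms are $\le C\w_s(f,\g/n)_X\le C\w_s(f,1/n)_X$ by the classical comparison of mixed and equal-step moduli. For the second estimate, take $T_{2^\nu n}\in\mathcal T_{2^\nu n}$ with $\|f-T_{2^\nu n}\|_X\le 2E_{2^\nu n}(f)_X$ ($\nu\ge 0$, $T_{2^0n}:=T_n$) and set $U_\nu:=T_{2^\nu n}-T_{2^{\nu-1}n}\in\mathcal T_{2^\nu n}$, so $\|U_\nu\|_X\le CE_{2^{\nu-1}n}(f)_X$. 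Telescoping $f-T_n=\sum_{\nu\ge1}U_\nu$ and using the triangle inequality for the seminorm $\|\cdot\|_{X_n}$ give $\|f-T_n\|_{X_n}\le\sum_{\nu\ge1}\|U_\nu\|_{X_n}$ (when this series diverges the theorem is trivial). Hence it suffices to prove the dilation-type Marcinkiewicz--Zygmund inequality
\begin{equation}\label{rGmz}
\|U\|_{X_n}\le C\,\|\d_{2^{-\nu}}\|_{X\to X}\,\|U\|_X,\qquad U\in\mathcal T_{2^\nu n},
\end{equation}
with $C$ independent of $U$, $n$, $\nu$. I expect \eqref{rGmz} to be the main obstacle.

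To prove \eqref{rGmz}, refine the grid $(x_k)$: inside each $[x_k,x_{k+1})$ (whose length lies in $[\g/n,\g'/n]$) insert $L\asymp 2^\nu$ equally spaced points $z_{k,0}=x_k,\dots,z_{k,L-1}$ so that the refined grid has step at most $\g/(2^\nu n)$; Lemma~\ref{lemmz0}, applied with $\Phi_{2^\nu n}=\mathcal T_{2^\nu n}$ (admissible under Assumption~A), gives $\|G_\#\|_X\le C\|U\|_X$ for $G_\#:=\sum_{k,i}|U(z_{k,i})|\chi_{[z_{k,i},z_{k,i+1})}$. The step function appearing in $\|U\|_{X_n}$ is $G^*:=\sum_k|U(x_k)|\chi_{[x_k,x_{k+1})}$. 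Since the coarse nodes $x_k=z_{k,0}$ form a subset of the refined ones, and both grids are roughly uniform at the scales $1/n$ and $1/(2^\nu n)$, for every $y>0$
\begin{equation*}
\mu\{G^*>y\}\asymp\tfrac1n\,\#\{k:|U(x_k)|>y\}\le\tfrac1n\,\#\{(k,i):|U(z_{k,i})|>y\}\asymp 2^\nu\,\mu\{G_\#>y\},
\end{equation*}
i.e. the distribution function of $G^*$ is dominated by that of $G_\#$ up to a factor $\asymp 2^\nu$; equivalently $(G^*)^*(t)\le(G_\#)^*(c\,2^{-\nu}t)$ for the decreasing rearrangements, with $c$ depending only on $\g,\g'$. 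Writing $\|g\|_X=\|g^*\|_{\bar X}$ for the rearrangement-invariant representation $\bar X$ of $X$ and $D_a$ for the dilation on $\bar X$, this gives $\|G^*\|_X\le C\,\|D_{2^\nu}\|_{\bar X\to\bar X}\,\|G_\#\|_X$ (the dilation parameter being $\asymp 2^\nu$; for the finitely many $\nu$ with $2^\nu n\asymp n$, \eqref{rGmz} is immediate from Lemma~\ref{lemmz0}). Finally $\|D_{2^\nu}\|_{\bar X\to\bar X}\le\|\d_{2^{-\nu}}\|_{X\to X}$ — obtained by testing $\d_{2^{-\nu}}$ on functions supported on an arc of $\T$ of measure $2^{-\nu}$, for which $(\d_{2^{-\nu}}f)^*=D_{2^\nu}(f^*)$. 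Combining this with $\|G_\#\|_X\le C\|U\|_X$ yields \eqref{rGmz}.

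Collecting the pieces proves \eqref{ri01}: $\|f-G_nf\|_X\le C\|(I-\dot A_{\g/n})^sf\|_X+K_1\|f-T_n\|_{X_n}\le C\big(\w_s(f,1/n)_X+\sum_{\nu\ge1}\|\d_{2^{-\nu}}\|_{X\to X}E_{2^{\nu-1}n}(f)_X\big)$. If moreover $G_nT=T$ for all $T\in\mathcal T_n$, the term $\|T_n-G_nT_n\|_X$ vanishes, so $\|f-G_nf\|_X\le 2E_n(f)_X+K_1\|f-T_n\|_{X_n}$; since $\d_{1/2}$ maps constants to constants, $\|\d_{1/2}\|_{X\to X}\ge1$, and the leftover $2E_n(f)_X\le 2\|\d_{1/2}\|_{X\to X}E_n(f)_X$ is absorbed into the sum $\sum_{\nu\ge1}\|\d_{2^{-\nu}}\|_{X\to X}E_{2^{\nu-1}n}(f)_X$; hence the modulus $\w_s(f,1/n)_X$ disappears from the estimate.
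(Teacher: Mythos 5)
Your argument is correct and follows the same skeleton as the paper's proof: the same splitting with a near-best approximant $T_n$, the same use of \eqref{a1} and \eqref{a3} together with Lemma~\ref{lemm3} to reduce everything to $\w_s(f,1/n)_X$ plus $\|f-T_n\|_{X_n}$, and the same dyadic telescoping $f-T_n=\sum_{\nu\ge1}(T_{2^\nu n}-T_{2^{\nu-1}n})$ controlled by a dilation-type Marcinkiewicz--Zygmund inequality for $U\in\mathcal T_{2^\nu n}$, which is exactly the paper's inequality \eqref{ri1}. The one place where you genuinely diverge is the proof of that key inequality: the paper first factors out $\|\d_{2^{-\nu}}\|_{X\to X}$ by dilating the node intervals $[x_k,x_{k+1})$ to $[2^{-\nu}x_k,2^{-\nu}x_{k+1})$ and then compares the resulting step function with one built on a refined grid containing both $(x_k)$ and $(2^{-\nu}x_k)$, invoking Lemma~\ref{lemmz0} at scale $2^\nu n$; you instead apply Lemma~\ref{lemmz0} to the refined grid first and then transfer from the fine-grid to the coarse-grid step function by comparing distribution functions, converting the factor $\asymp 2^\nu$ into the dilation norm through the Luxemburg representation and the bound $\|D_{2^\nu}\|_{\bar X\to\bar X}\le C\|\d_{2^{-\nu}}\|_{X\to X}$ (your test-function construction is cleanest if the test function is arranged symmetrically about the origin, or one splits off the tail on $(1/2,1)$ using $\|D_2\|\le 2$; either way the constant is harmless). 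This rearrangement-based route is a legitimate alternative and in fact makes explicit the equimeasurability that the paper's comparison step uses only tersely (the displayed domination there is not literally pointwise), at the modest price of invoking the Luxemburg representation, which the paper avoids. Note also that both your argument and the paper's rely on the telescoping identity holding at the sample nodes, i.e.\ $T_{2^N n}(x_k)\to f(x_k)$, which neither treatment spells out; everything else, including your absorption of the term $E_n(f)_X$ into the sum in the interpolatory case via $\|\d_{1/2}\|_{X\to X}\ge1$, matches the intended proof.
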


\begin{proof}
Without loss of generality we can assume that $E_n(f)_X\to 0$ as $n\to \infty$, since  otherwise the left hand side of~\eqref{ri01} is infinite and hence the statement of the theorem is trivial.
First, we show that for each $\nu\in \N$ the following inequality holds:
\begin{equation}\label{ri1}
  \|U_{2^\nu n}\|_{X_n}\le C\|\d_{2^{-\nu}}\|_{X\to X}\|U_{2^\nu n}\|_{X},\quad U_{2^\nu n}\in \mathcal{T}_{2^\nu n},
\end{equation}
where the constant $C$ is independent of $\nu$ and $n$. We have
\begin{equation}\label{ri2}
  \begin{split}
    \|U_{2^\nu n}\|_{X_n}&=\bigg\|\sum_{k=1}^{m_n}|U_{2^\nu n}(x_k)|\chi_{[x_k, x_{k+1})}\bigg\|_X\\
    &\le \|\d_{2^{-\nu}}\|_{X\to X}\bigg\|\sum_{k=1}^{m_n}|U_{2^\nu n}(x_k)|\chi_{[2^{-\nu}x_k, 2^{-\nu}x_{k+1})}\bigg\|_X.
  \end{split}
\end{equation}
Since $\mathcal{X}_n=(x_k)_{k=1}^{m_n}$ satisfies~\eqref{g}, we can construct a set of nodes $\mathcal{Y}_{n,\nu}=(y_\ell)_{\ell=1}^{2^\nu m_n}$ with the following properties:
$$
-\pi\le y_1<y_2<\dots<y_{2^\nu m_n}<\pi,
$$
$$
(x_k)_{k=1}^{m_n}\cup (2^{-\nu}x_k)_{k=1}^{m_n}\subset \mathcal{Y}_{n,\nu},
$$
and there exist positive constants $\tau$ and  $\tau'$ such that
$$
\frac{\tau}{2^\nu n}\le y_{\ell+1}-y_\ell\le \frac{\tau'}{2^\nu n},\quad \ell=1,\dots, 2^\nu m_n,
$$
where $y_{2^\nu m_n+1}=2\pi+y_1$.
Thus, noting that $X$ satisfies Assumption~A (being rearrangement-invariant) and applying Lemma~\ref{lemmz0}, we derive from~\eqref{ri2} the estimate
\begin{equation*}
  \begin{split}
     \|U_{2^\nu n}\|_{X_n}&\le\|\d_{2^{-\nu}}\|_{X\to X}\bigg\|\sum_{\ell=1}^{2^\nu m_n}|U_{2^\nu n}(y_\ell)|\chi_{[y_\ell, y_{\ell+1})}\bigg\|_X\\
     &\le C \|\d_{2^{-\nu}}\|_{X\to X}\|U_{2^\nu n}\|_X,
   \end{split}
\end{equation*}
which yields~\eqref{ri1}.

We now prove~\eqref{ri01}. Let $T_\mu\in \mathcal{T}_\mu$ be such that $\|f-T_\mu\|_X\le 2E_\mu(f)_X$ for each $\mu\in \N$.  By assumptions~\eqref{a3} and~\eqref{a1}, we obtain
\begin{equation}\label{ri3}
\begin{split}
  \|f-G_n f\|_X&\le \|f-T_n\|_X+\|T_n-G_n T_n\|_X+\|G_n(T_n-f)\|_X\\
  &\le \|f-T_n\|_X+K_3n^{-s}\|T_n^{(s)}\|_X+K_1\|f-T_n\|_{X_n}.
\end{split}
\end{equation}
By Lemmas~\ref{lemm2}(i) and~\ref{lemm3} together with~\eqref{modav2}, we have
\begin{equation}\label{ri4}
  \|f-T_n\|_X+n^{-s}\|T_n^{(s)}\|_X\le C\w_s(f,1/n)_X.
\end{equation}
To estimate $\|f-T_n\|_{X_n}$, we use the following representation:
$$
f(x)-T_n(x)=\sum_{\nu=1}^\infty T_{2^{\nu}n}(x)-T_{2^{\nu-1}n}(x),
$$
which together with~\eqref{ri1} yields
\begin{equation}\label{ri5}
\begin{split}
    \|f-T_n\|_{X_n}&\le \sum_{\nu=1}^\infty \|T_{2^{\nu}n}-T_{2^{\nu-1}n}\|_{X_n}\\
&\le C\sum_{\nu=1}^\infty \|\d_{2^{-\nu}}\|_{X\to X}\|T_{2^{\nu}n}-T_{2^{\nu-1}n}\|_X\\
&\le C\sum_{\nu=1}^\infty \|\d_{2^{-\nu}}\|_{X\to X}E_{2^{\nu-1}n}(f)_X.
\end{split}
\end{equation}
Finally, combining~\eqref{ri3}, \eqref{ri4}, and~\eqref{ri5}, we prove~\eqref{ri01}.
\end{proof}

Applying Theorem~\ref{thriG} and the same arguments as in the proof of Corollary~\ref{thL1}, we obtain the following result, which generalizes~\eqref{eq:Besov} to the case of a general rearrangement-invariant Banach lattice $X$.

\begin{corollary}\label{thriL}
  Let $X$ be a rearrangement-invariant Banach lattice. Assume that $\sup_n\|S_n\|_{X\to X}<\infty$. Then, for all $f\in X$, we have
  \begin{equation*}
    \|f-\mathcal{L}_n f\|_X\le C\sum_{\nu=1}^\infty \|\d_{2^{-\nu}}\|_{X\to X} E_{2^{\nu-1}n}(f)_X,
  \end{equation*}
where the constant $C$ is independent of $f$ and $n$.
\end{corollary}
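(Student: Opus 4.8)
The plan is to obtain Corollary~\ref{thriL} as a direct specialization of Theorem~\ref{thriG} to the Lagrange interpolation operator $G_n=\mathcal{L}_n$, exactly as Corollary~\ref{thL1} was deduced from Theorem~\ref{thG1}. Thus the whole task reduces to checking that $\mathcal{L}_n$ meets the hypotheses of Theorem~\ref{thriG} together with the additional reproduction property $G_nT=T$ for $T\in\mathcal{T}_n$.

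First I would observe that a rearrangement-invariant Banach lattice automatically satisfies Assumption~A (this is precisely the remark used inside the proof of Theorem~\ref{thriG}), so Theorem~\ref{thriG} is applicable to our $X$. Next, since $\mathcal{L}_n$ interpolates at the equidistant nodes $t_k=2\pi k/(2n+1)$, which obey the spacing condition~\eqref{g}, and since it is classical that $\mathcal{L}_n T=T$ for every $T\in\mathcal{T}_n$, assumption~\eqref{a3} holds trivially for an arbitrary $s\in\N$ (its left-hand side being identically zero), and the extra hypothesis $G_nT=T$ appearing in the last sentence of Theorem~\ref{thriG} is satisfied.

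The only point requiring an argument is assumption~\eqref{a1}. Here I would invoke Lemma~\ref{lemmz}: since $\sup_n\|S_n\|_{X\to X}<\infty$ by hypothesis, every $U\in\mathcal{T}_n$ satisfies $\|U\|_X\asymp\big\|\sum_{k=0}^{2n}|U(t_k)|\chi_{[t_k,t_{k+1})}\big\|_X$. Taking $U=\mathcal{L}_nf\in\mathcal{T}_n$ and using $\mathcal{L}_nf(t_k)=f(t_k)$ gives $\|\mathcal{L}_nf\|_X\le C\|f\|_{X_n}$, which is exactly~\eqref{a1}; this is the same step carried out in the proof of Corollary~\ref{thL1}.

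With all hypotheses verified, I would apply Theorem~\ref{thriG} with $G_n=\mathcal{L}_n$ and read off the conclusion from its concluding sentence: because $\mathcal{L}_nT=T$ for $T\in\mathcal{T}_n$, estimate~\eqref{ri01} holds with the term $\w_s(f,1/n)_X$ removed, which is precisely the asserted inequality. The only (and very mild) obstacle is the verification of~\eqref{a1}, and once the Marcinkiewicz--Zygmund equivalence of Lemma~\ref{lemmz} is available at the optimal nodes $(t_k)$, the corollary is immediate; all the substantive work has already been absorbed into Theorem~\ref{thriG}.
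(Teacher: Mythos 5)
Your proposal is correct and matches the paper's own argument: the paper proves Corollary~\ref{thriL} by "applying Theorem~\ref{thriG} and the same arguments as in the proof of Corollary~\ref{thL1}," i.e., exactly your verification that $\mathcal{L}_nT=T$ on $\mathcal{T}_n$ (so~\eqref{a3} is trivial and the modulus term can be dropped) and that~\eqref{a1} follows from Lemma~\ref{lemmz} under $\sup_n\|S_n\|_{X\to X}<\infty$. No discrepancies to report.
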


\subsection{Examples}\label{secext}

At the beginning of Section~\ref{main}, we already mentioned that, under certain restrictions on $X$, the classical Lagrange interpolation polynomial satisfies assumptions~\eqref{a1}, \eqref{a2}, and~\eqref{a3}.

In this section, we investigate assumptions~\eqref{a1}, \eqref{a3}, and~\eqref{a4} for more general quasi-interpolation operators
$$
Q_n^\vp f(x):=\frac{1}{2n+1}\sum_{k=0}^{2n}f(t_k)\vp_n(x-t_k),\quad t_k=\frac{2\pi k}{2n+1},
$$
where the kernel $\vp_n$ has the form
$$
\vp_n(x)=\sum_{k=-n}^n \vp\(\frac kn\)e^{ikx}
$$
and $\vp$ is an even function of bounded variation with $\supp\vp\subset [-1,1]$.

\begin{proposition}\label{prex1}
   Let $X$ be a Banach lattice such that
   \begin{equation}\label{zv1}
     \sup_{n\in \N,\, \|f\|_X\le 1}\|f*\vp_n\|_X<\infty,
   \end{equation}
   then the operator $Q_n^\vp$ satisfies~\eqref{a1}.

   In particular, \eqref{zv1} holds if $X$ satisfies Assumption A
and $\widehat{\vp}\in \mathcal{R}^*$.
\end{proposition}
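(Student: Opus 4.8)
The plan is to represent $Q_n^\vp f$ as a convolution with the kernel $\vp_n$ and then invoke~\eqref{zv1}. Unfolding~\eqref{a1}, one must prove $\|Q_n^\vp f\|_X\le K_1\|f\|_{X_n}$, where $\|f\|_{X_n}=\|\widetilde f\|_X$ with $\widetilde f:=\sum_{k=0}^{2n}f(t_k)\chi_{[t_k,t_{k+1})}$; the intervals $[t_k,t_{k+1})$ are pairwise disjoint of total length $2\pi$, so replacing $f(t_k)$ by $|f(t_k)|$ does not change the norm and $\|\widetilde f\|_X$ indeed equals the semi-norm $\|f\|_{X_n}$.

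The first step is the identity $Q_n^\vp f=\vp_n*\mathcal{L}_n f$, where $\mathcal{L}_n f$ is the trigonometric Lagrange interpolant on the nodes $t_k=\tfrac{2\pi k}{2n+1}$ from the Introduction. Since $\supp\vp\subset[-1,1]$ forces $\vp(k/n)=0$ for $|k|>n$ (so $\vp_n\in\mathcal{T}_n$), we have $\widehat{Q_n^\vp f}(l)=\vp(l/n)\,\tfrac1{2n+1}\sum_{k=0}^{2n}f(t_k)e^{-ilt_k}$ for $|l|\le n$ and $0$ otherwise; the same coefficients are obtained for $\vp_n*\mathcal{L}_n f$ upon using the quadrature formula $\tfrac1{2\pi}\int_\T g=\tfrac1{2n+1}\sum_{k=0}^{2n}g(t_k)$, valid for $g\in\mathcal{T}_{2n}$ (applied to $g=(\mathcal{L}_n f)e^{-il\cdot}\in\mathcal{T}_{2n}$ for $|l|\le n$), which gives $\widehat{\mathcal{L}_n f}(l)=\tfrac1{2n+1}\sum_k f(t_k)e^{-ilt_k}$. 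One cannot simply pass to $\|\mathcal{L}_n f\|_X$ at this point, because $\mathcal{L}_n$ itself need not satisfy~\eqref{a1} when the partial Fourier sums $S_n$ are not uniformly bounded on $X$. Instead, comparing Fourier coefficients shows $\widehat{\widetilde f}(l)=m(\tfrac{lh}{2})\,\widehat{\mathcal{L}_n f}(l)$ for $|l|\le n$, where $h=\tfrac{2\pi}{2n+1}$ and $m(\tau)=\tfrac{\sin\tau}{\tau}e^{-i\tau}$, whence $Q_n^\vp f=\vp_n*\mu_n*\widetilde f$, with $\mu_n$ the kernel whose symbol agrees with $1/m(\tfrac{lh}{2})$ for $|l|\le n$. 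Since $1/m$ is a fixed analytic function on $(-\tfrac\pi2,\tfrac\pi2)$ and $|\tfrac{lh}{2}|\le\tfrac{\pi n}{2n+1}<\tfrac\pi2$, one may choose $\mu_n$ (extending its symbol smoothly outside $[-n,n]$, which is irrelevant after convolution with $\vp_n$) so that $\mu_n$ admits a symmetrically decreasing majorant $\mu_n^*$ with $\sup_n\|\mu_n^*\|_{L_1(\T)}<\infty$.

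The estimate now follows by rearranging the convolutions and using~\eqref{zv1} with $M:=\sup_n\|g\mapsto g*\vp_n\|_{X\to X}$: one has $\|Q_n^\vp f\|_X=\|(\mu_n*\widetilde f)*\vp_n\|_X\le M\,\|\mu_n*\widetilde f\|_X\le CM\,\|\widetilde f\|_X=CM\,\|f\|_{X_n}$, where the last step is Lemma~\ref{lemS} applied to $\mu_n^*$. For the ``in particular'' assertion I would verify~\eqref{zv1} by Poisson summation: since $\vp(k/n)=0$ for $|k|>n$, $\vp_n(x)=\sum_{k\in\Z}\vp(k/n)e^{ikx}=2\pi n\sum_{m\in\Z}\widehat\vp\bigl(n(x+2\pi m)\bigr)$; if $\widehat\vp\in\mathcal{R}^*$ with summable symmetrically decreasing majorant $K^*$, then $|\vp_n(x)|\le 2\pi n\sum_m K^*\bigl(n(x+2\pi m)\bigr)=:V_n(x)$, and a change of variables gives $\|V_n\|_{L_1(\T)}=\|K^*\|_{L_1(\R)}$ independently of $n$, with $V_n$ having on $\T$ a symmetrically decreasing majorant of comparable $L_1(\T)$-norm. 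Since $X$ satisfies Assumption~A, Lemma~\ref{lemS} then yields $\|f*\vp_n\|_X\le\||f|*V_n\|_X\le C\|K^*\|_{L_1(\R)}\|f\|_X$ uniformly in $n$, which is~\eqref{zv1}.

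The technical crux is the passage from the discrete sum defining $Q_n^\vp f$ to an honest convolution with $\vp_n$: the naive candidate $\vp_n*\widetilde f$ is not $Q_n^\vp f$ but its Steklov average (one checks $\tfrac1h\int_{x-h}^{x}Q_n^\vp f(u)\,du=(\vp_n*\widetilde f)(x)$), so this averaging must be ``undone'' on $\mathcal{T}_n$ — precisely the role of the correction kernel $\mu_n$ — and verifying that $\mu_n$ has a uniformly integrable symmetrically decreasing majorant is the one point that requires care. Once the identity and this majorant bound are in place, the remaining manipulations are routine.
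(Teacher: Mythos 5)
Your route is genuinely different from the paper's. The paper never constructs a correction kernel: it expresses $\|Q_n^\vp f\|_X$ by duality as a pairing of the step function $\sum_k f(t_k)\chi_{[t_k,t_{k+1})}$ with the step function built from the values $(\vp_n*\overline g)(t_\ell)$, applies H\"older's inequality for $(X,X')$, and then uses the MZ-inequality of Lemma~\ref{lemmz0} for $\vp_n*\overline g\in\mathcal T_n$ in $X'$ together with the dualized form of~\eqref{zv1}. You instead pass through the quadrature identity $Q_n^\vp f=\vp_n*\mathcal L_nf$ and a correction multiplier $\mu_n$ turning $\widetilde f$ into $\mathcal L_nf$ on frequencies $|l|\le n$; your Fourier-coefficient computations (including $\widehat{\widetilde f}(l)=m(lh/2)\,\widehat{\mathcal L_nf}(l)$ and the irrelevance of the symbol of $\mu_n$ outside $[-n,n]$ after convolution with $\vp_n$) are correct, and your treatment of the ``in particular'' part is essentially the paper's (the paper uses the discretization inequality \cite[4.1.2]{TB} rather than Poisson summation, which also avoids the delicate question of pointwise validity of Poisson summation for a BV function not normalized at its jumps; the price is only the harmless additive constant $2V(\vp)$).

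There are, however, two genuine gaps. First, the crux of your argument --- that $\mu_n$ can be chosen with a symmetrically decreasing majorant satisfying $\sup_n\|\mu_n^*\|_{L_1(\T)}<\infty$ --- is asserted (``one may choose $\mu_n$ \dots so that'') but not proved, and analyticity of $1/m$ on $(-\pi/2,\pi/2)$ by itself says nothing about uniform $L_1$ bounds for the kernels. The claim is true and provable by the same mechanism the paper applies to $\vp_n$: since $1/m(lh/2)=w\bigl(\tfrac{l}{2n+1}\bigr)$ with the $n$-independent function $w(\xi)=\frac{\pi\xi}{\sin\pi\xi}e^{i\pi\xi}\eta_0(\xi)$, where $\eta_0\in C^\infty$, $\eta_0=1$ on $[-\tfrac12,\tfrac12]$ and $\supp\eta_0\subset(-1,1)$ (the cutoff must stay inside $(-1,1)$, since $\pi\xi/\sin\pi\xi$ blows up at $\xi=\pm1$; a cutoff like the paper's $\eta$, equal to $1$ on $[-1,1]$, would not do here), the inequality \cite[4.1.2]{TB} gives $|\mu_n(x)|\le C(2n+1)\,\widehat{w}^{\,*}((2n+1)x)+2V(w)$ and hence the required uniform majorant; this verification has to be written out, as it carries the whole weight of your proof. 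Second, your estimate $\|\mu_n*\widetilde f\|_X\le C\|\widetilde f\|_X$ invokes Lemma~\ref{lemS}, which requires Assumption~A, whereas the first assertion of the proposition assumes only~\eqref{zv1}; so, as written, your argument establishes~\eqref{a1} only for lattices satisfying Assumption~A (sufficient for the ``in particular'' statement and for all applications in the paper, but formally narrower than the claim, the paper's duality route relying instead on the MZ inequality in $X'$).
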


\begin{proof}
By duality and H\"older's inequality, we obtain
  \begin{equation}\label{prex1e1}
  \begin{split}
         \|Q_n^\vp f\|_X&=\sup_{\|g\|_{X'}\le 1}\int_\T \frac{1}{2n+1}\sum_{k=0}^{2n}f(t_k)\vp_n(x-t_k)\overline{g(x)}d\mu(x)\\
    &=\sup_{\|g\|_{X'}\le 1}\int_\T \(\sum_{k=0}^{2n}f(t_k)\chi_{[t_k,t_{k+1})}(x)\)
    \(\sum_{\ell=0}^{2n}\vp_n*\overline{g}(t_\ell)\chi_{[t_\ell,t_{\ell+1})}(x)\)d\mu(x)\\
    &\le \sup_{\|g\|_{X'}\le 1} \bigg\|\sum_{k=0}^{2n}f(t_k)\chi_{[t_k,t_{k+1})}\bigg\|_X\bigg\|\sum_{\ell=0}^{2n}\vp_n*\overline{g}(t_\ell)\chi_{[t_\ell,t_{\ell+1})}\bigg\|_{X'}\\
    &\le \sup_{\|g\|_{X'}\le 1} \|\vp_n*\overline{g}\|_{X_n'}\|f\|_{X_n}.
  \end{split}
  \end{equation}
Applying Lemma~\ref{lemmz0} and, again, by duality arguments, we get
  \begin{equation}\label{prex1e2-}
  \begin{split}
        \|\vp_n*\overline{g}\|_{X_n'}\le C\|\vp_n*\overline{g}\|_{X'}\le C\|\overline{g}\|_{X'}\le C.
  \end{split}
  \end{equation}
Thus, combining~\eqref{prex1e1} and~\eqref{prex1e2-}, we prove that  $Q_n^\vp$ satisfies~\eqref{a1}.

We now show that if $X$ satisfies Assumption A and $\widehat{\vp}\in \mathcal{R}^*$, then
\begin{equation}\label{vpzv}
  \vp_n\in\mathcal{R}^*\quad\text{with}\quad\sup_n \|\vp_n^*\|_{L_1(\T)}<\infty.
\end{equation}
To this end, we use the following general result from~\cite[4.1.2]{TB}: if $\phi:\mathbb{R}\to\mathbb{C}$ is a function of bounded variation such that
$\lim_{|x|\to\infty} \phi(x)=0$, then, for all $\varepsilon>0$, we have
\begin{equation}\label{trigub}
  \sup_{0<|x|\le \pi}
\left|
\varepsilon \sum_{k=-\infty}^{\infty} \phi(\varepsilon k)e^{ikx}
- \int_{-\infty}^{\infty} \phi(u)e^{iux/\varepsilon}\,du
\right|
\le 2\varepsilon V(\phi),
\end{equation}
where $V(\phi)$ denotes the total variation of $\phi$ on $\mathbb{R}$. It is easy to see that if, in addition, $\phi$ has a compact support, then this inequality is valid for all $|x|\le\pi$.

Using~\eqref{trigub}, we obtain
\begin{equation*}
      |\vp_n(x)|\le |n\widehat{\vp}(nx)|+2V(\vp)\le |n\widehat{\vp}^*(nx)|+2V(\vp).
\end{equation*}
Since
\begin{equation*}
\|n\widehat{\vp}^*(n\cdot)\|_{L_1(\T)}\le \|\widehat{\vp}^*\|_{L_1(\R)}<\infty,
\end{equation*}
the function $\vp_n$ satisfies~\eqref{vpzv}.

Thus, applying  Lemma~\ref{lemS}, we get
  \begin{equation*}
  \begin{split}
        \|\vp_n*f\|_{X}\le C\|\vp_n^*\|_{L_1(\T)}\|f\|_{X}\le C\|f\|_{X},
  \end{split}
  \end{equation*}
which yields~\eqref{zv1}.
\end{proof}

The following remark provides simple conditions on $f\in L_1(\R)$ to ensure that $\widehat{f}\in \mathcal{R}^*$.

\begin{remark}\label{reml1}
Recall that for $f\in L_1(\R)$ and $r\ge 2$, $r\in \N$, the following inequality holds (see, e.g.,~\cite{Treb77}):
$$
|\widehat{f}(\xi)|\le C\w_r\(f,\frac1{|\xi|}\)_{L_1(\R)},
$$
where the constant $C$ is independent of $f$ and $\xi$.
In particular, if $\displaystyle\int_0^1\frac{\w_r(f,t)_{L_1(\R)}}{t^2}dt<\infty$  or, stronger,  $f\in W_1^2(\R)$, then $\widehat{f}\in \mathcal{R}^*$.  This follows from the fact that the modulus of smoothness is an increasing function and $\w_r(f,h)_{L_1(\R)}\le Ch^2\|f''\|_{L_1(\R)}$, see, e.g.,~\cite{KT20}.
\end{remark}

The next proposition provides conditions on $\vp$ under which assumptions~\eqref{a3} and~\eqref{a4} are satisfied for $Q_n^\vp$.
To formulate it, we denote by $\eta$ a function in $C^\infty(\R)\cap \mathcal{R}$ such that $\eta(\xi)=1$ for $|\xi|\le 1$ and $\eta(\xi)=0$ for $|\xi|\ge 2$.

\begin{proposition}\label{prex2}
   Let $X$ be a Banach lattice satisfying Assumption A.
\begin{itemize}
\item[(i)] Let $$w_1(\xi)=\frac{1-\varphi(\xi)}{\xi^s}\eta(\xi).$$
If $w_1$ is of bounded variation and $\widehat{w_1}\in\mathcal R^*$, then $Q_n^\varphi$ satisfies~\eqref{a3}.

\item[(ii)] Let $$w_2(\xi)=\frac{\xi^s\eta(\xi)}{1-\varphi(\xi)}.$$
If $w_2$ is of bounded variation and $\widehat{w_2}\in\mathcal R^*$, then $Q_n^\varphi$ satisfies~\eqref{a4}.
\end{itemize}
Here $w_1$ and $w_2$ are understood to be continuously extended at $\xi=0$.
\end{proposition}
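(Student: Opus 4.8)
The plan is to realize both~\eqref{a3} and~\eqref{a4} as the two sides of a single Fourier-multiplier identity for $Q_n^\varphi$, and then to bound the relevant kernels by means of Lemma~\ref{lemS}, exactly as in the proof of Proposition~\ref{prex1}.

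First I would compute $Q_n^\varphi$ on $\mathcal{T}_n$. For $T(x)=\sum_{|m|\le n}c_m e^{imx}$, the discrete orthogonality relation $\frac1{2n+1}\sum_{k=0}^{2n}e^{i(j-m)t_k}=\delta_{jm}$ (valid since $|j-m|\le 2n<2n+1$) gives $Q_n^\varphi T(x)=\sum_{|m|\le n}\varphi(m/n)c_m e^{imx}$, so that
$$
\widehat{T-Q_n^\varphi T}(m)=(1-\varphi(m/n))\,c_m,\qquad |m|\le n .
$$
Since $\eta(m/n)=1$ whenever $|m|\le n$, the very definitions of $w_1$ and $w_2$ yield the pointwise factorizations
$$
1-\varphi(m/n)=(m/n)^s\,w_1(m/n),\qquad (m/n)^s=(1-\varphi(m/n))\,w_2(m/n)\qquad(|m|\le n),
$$
which also hold at $m=0$: the boundedness of $w_1$ near the origin (a consequence of its bounded variation) forces $\varphi(0)=1$, so both sides vanish there. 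Introducing $g:=i^{-s}n^{-s}T^{(s)}\in\mathcal{T}_n$, which satisfies $\widehat g(m)=(m/n)^s c_m$ and $\|g\|_X=n^{-s}\|T^{(s)}\|_X$, the two factorizations translate (with the normalization $\widehat{F*H}=\widehat F\,\widehat H$) into
$$
T-Q_n^\varphi T=W_n*g,\qquad i^{-s}n^{-s}T^{(s)}=V_n*(T-Q_n^\varphi T),
$$
where $W_n(x):=\sum_{m}w_1(m/n)e^{imx}$ and $V_n(x):=\sum_m w_2(m/n)e^{imx}$ are trigonometric polynomials of degree at most $2n$, the sums being finite because $\eta$, hence $w_1$ and $w_2$, have compact support.

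The next step is to control $W_n$ and $V_n$ in the sense required by Lemma~\ref{lemS}, repeating verbatim the kernel estimate from the proof of Proposition~\ref{prex1}. Applying~\eqref{trigub} with $\phi=w_1$ and $\varepsilon=1/n$, and using that $w_1$ has compact support so that~\eqref{trigub} is valid for all $|x|\le\pi$, I obtain
$$
|W_n(x)|\le 2\pi n\,\widehat{w_1}^*(nx)+2V(w_1)=:W_n^*(x),\qquad |x|\le\pi ,
$$
where $W_n^*$ is symmetrically decreasing and, since $\widehat{w_1}\in\mathcal R^*$,
$$
\|W_n^*\|_{L_1(\T)}\le \|\widehat{w_1}^*\|_{L_1(\R)}+2V(w_1)<\infty
$$
uniformly in $n$; hence $W_n\in\mathcal R^*$ with $\sup_n\|W_n^*\|_{L_1(\T)}<\infty$, and the same argument applied to $w_2$ gives the analogous statement for $V_n$.

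Finally I would invoke Lemma~\ref{lemS} twice. From $T-Q_n^\varphi T=W_n*g$ it gives $\|T-Q_n^\varphi T\|_X\le C\|W_n^*\|_{L_1(\T)}\|g\|_X\le C'n^{-s}\|T^{(s)}\|_X$, which is~\eqref{a3}; from $i^{-s}n^{-s}T^{(s)}=V_n*(T-Q_n^\varphi T)$ it gives $n^{-s}\|T^{(s)}\|_X=\|V_n*(T-Q_n^\varphi T)\|_X\le C\|V_n^*\|_{L_1(\T)}\|T-Q_n^\varphi T\|_X$, which is~\eqref{a4}. The only genuinely technical point is the $n$-uniform control of the majorants $W_n^*$ and $V_n^*$, and this is precisely where the bounded-variation and $\widehat{w_i}\in\mathcal R^*$ hypotheses enter, through~\eqref{trigub}; everything else is the multiplier bookkeeping of the second paragraph.
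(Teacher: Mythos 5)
Your proof is correct and follows essentially the same route as the paper: it reduces \eqref{a3} and \eqref{a4} to the uniform $X$-boundedness of convolution with the kernels $W_n$ and $V_n$ generated by $w_1$ and $w_2$, and controls those kernels through \eqref{trigub} and Lemma~\ref{lemS}, exactly as in the proof of Proposition~\ref{prex1}. The only difference is presentational: where the paper combines the quadrature identity $Q_n^\varphi T=T*\varphi_n$ with the cited principle of comparison for Fourier multipliers, you carry out the multiplier factorization explicitly (and you write out part (ii), which the paper dismisses as analogous).
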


\begin{proof}
  We prove only the first statement. By the quadrature formula for trigonometric polynomials, we have
   $$
   Q_n^\vp T=T*\vp_n,\quad T\in \mathcal{T}_n.
   $$
  This together with the principle of comparison for Fourier multipliers (see, e.g.,~\cite[Ch.~7]{TB} or \cite[Sec.~2.4]{V23}) implies that inequality~\eqref{a3} holds for $Q_n^\vp$  if and only if there exists a constant $C>0$ such that
  \begin{equation*}
    \|T*W_n\|_X\le C\|T\|_X,\quad T\in\mathcal{T}_n,
  \end{equation*}
  where
  $$
  W_n(x)=\sum_{k}w_1\(\frac kn\)e^{ikx}.
  $$
  Therefore, to prove the statement (i), it is enough to check that $W_n\in\mathcal{R}^*$ and $\sup_n \|W_n\|_{L_1(\T)}<\infty$. This can be done by repeating the second part of the proof of Proposition~\ref{prex1}.
\end{proof}

\begin{example}\label{ex1}
We again consider the Lagrange interpolation operators
$$
\mathcal{L}_n f(x)=\frac{1}{2n+1}\sum_{k=0}^{2n} f(t_k)D_n(x-t_k),
$$
where $D_n$ is the corresponding Dirichlet kernel.

We note that $\mathcal{L}_n=Q_n^\vp$ with $\vp_n=D_n$ and $\vp=\chi_{[-1,1]}$. Thus, applying Proportions~\ref{prex1} and~\ref{prex2}  together with Corollary~\ref{cor2t} and the equalities $S_n f=f*D_n$ and $\mathcal{L}_nf(t_k)=f(t_k)$, $k=0,\dots,2n$,
we obtain the following statement.

\begin{corollary}\label{cor2+}
Let $X$ be a Banach lattice satisfying Assumption~A, and let $r, s \in \N$ with $2r\ge s$. Suppose that  $\sup_n\|S_n\|_{X\to X}<\infty$. Then the following properties are equivalent for all $\a \in (0,s)$:
  \begin{itemize}
    \item[(i)]  $\|f-\mathcal{L}_n f\|_X=\mathcal{O}(n^{-\a})$,
    \item[(ii)] $\|(I-A_{\g/n})^r f\|_{X_n}+\|(I-\dot A_{\g/n})^s f\|_{X}=\mathcal{O}(n^{-\a})$.
  \end{itemize}
\end{corollary}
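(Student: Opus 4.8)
The plan is to realize $\mathcal{L}_n$ as a particular instance of the quasi-interpolation operator $Q_n^\vp$ and then read off the equivalence directly from Corollary~\ref{cor2t}. The key observation is that $\mathcal{L}_n = Q_n^\vp$ for $\vp = \chi_{[-1,1]}$: this $\vp$ is even, of bounded variation, and supported in $[-1,1]$, and since $\vp(k/n) = 1$ for all $|k|\le n$, the associated kernel $\vp_n(x) = \sum_{|k|\le n}\vp(k/n)e^{ikx}$ is exactly the Dirichlet kernel $D_n$, so $Q_n^\vp f = \mathcal{L}_n f$. The nodes $t_k = 2\pi k/(2n+1)$ are equispaced with spacing comparable to $1/n$, hence~\eqref{g} holds, and the Bernstein inequality~\eqref{ber} for $\Phi_n = \mathcal{T}_n$ is provided by Lemma~\ref{lemm3}(i) since $X$ satisfies Assumption~A.

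Next I would verify the three hypotheses needed to apply Corollary~\ref{cor2t}: \eqref{a1}, \eqref{a3}, and $G_n : X_n \to \mathcal{T}_n$. Assumption~\eqref{a1} follows from Proposition~\ref{prex1}: its hypothesis~\eqref{zv1}, $\sup_n\|f*\vp_n\|_X < \infty$, is precisely the standing assumption $\sup_n\|S_n\|_{X\to X} < \infty$, because $f*\vp_n = f*D_n = S_n f$. Assumption~\eqref{a3} is immediate from the polynomial-reproduction property of $\mathcal{L}_n$: by the quadrature formula $Q_n^\vp T = T*\vp_n = S_n T = T$ for all $T \in \mathcal{T}_n$, so $\|T - \mathcal{L}_n T\|_X = 0$ and \eqref{a3} holds trivially with $K_3 = 0$ for every $s$, in particular for the given $s$ with $2r \ge s$. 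Finally, $\mathcal{L}_n f = \tfrac{1}{2n+1}\sum_k f(t_k)D_n(\cdot - t_k) \in \mathcal{T}_n$ because $D_n \in \mathcal{T}_n$, so $G_n : X_n \to \mathcal{T}_n$.

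It remains to exploit that $\mathcal{L}_n$ interpolates $f$ at the nodes: since $\tfrac{1}{2n+1}D_n(t_j - t_k) = \delta_{jk}$, one has $\mathcal{L}_n f(t_k) = f(t_k)$ for $k = 0,\dots,2n$, so $\|f - \mathcal{L}_n f\|_{X_n} = 0$ for every $f$. Therefore part~(i) of Corollary~\ref{cor2t} reduces to $\|f - \mathcal{L}_n f\|_X = \mathcal{O}(n^{-\a})$, while part~(ii) is already stated as $\|(I - A_{\g/n})^r f\|_{X_n} + \|(I - \dot A_{\g/n})^s f\|_X = \mathcal{O}(n^{-\a})$; the equivalence (i)$\iff$(ii) of Corollary~\ref{cor2t} is then exactly Corollary~\ref{cor2+}. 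There is no substantial obstacle, the proof being an assembly of earlier results; the only point that warrants care is~\eqref{a3}, where one should use the exact reproduction property rather than Proposition~\ref{prex2}(i), which does not apply to the non-smooth symbol $\vp = \chi_{[-1,1]}$. (Alternatively, since $\mathcal{L}_n$ also satisfies~\eqref{a2} by the Marcinkiewicz--Zygmund inequality of Lemma~\ref{lemmz}, one could invoke Corollary~\ref{cor2} instead.)
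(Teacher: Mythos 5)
Your proof is correct and follows essentially the same route as the paper: identify $\mathcal{L}_n=Q_n^{\vp}$ with $\vp=\chi_{[-1,1]}$, obtain \eqref{a1} from Proposition~\ref{prex1} via $f*D_n=S_nf$, use that $\mathcal{L}_n:X_n\to\mathcal{T}_n$ and that the interpolation property $\mathcal{L}_nf(t_k)=f(t_k)$ annihilates the $X_n$-term, and conclude with Corollary~\ref{cor2t} (the paper also records your parenthetical alternative via Corollary~\ref{cor2} and Lemma~\ref{lemmz}). Your only deviation, verifying \eqref{a3} by exact reproduction $\mathcal{L}_nT=T$ rather than through Proposition~\ref{prex2} (which the paper cites, although its hypothesis $\widehat{w_1}\in\mathcal{R}^*$ indeed fails for the discontinuous symbol $\chi_{[-1,1]}$), is a sound and in fact cleaner justification of that step.
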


An alternative way to prove this result is to apply Corollary~\ref{cor2} together with Lemma~\ref{lemmz}.

\end{example}

\begin{example}\label{ex2}
Consider operators generated by the Bochner-Riesz and Fej\'er kernels:
$$
R_n(x):=\sum_{k=-n}^n \rho\(\frac kn\)e^{ikx},\quad \rho(\xi)=(1-\xi^2)_+^\a,\quad \a>0,
$$
$$
F_n(x):=\sum_{k=-n}^n \phi\(\frac kn\)e^{ikx},\quad \phi(\xi)=(1-|\xi|)_+.
$$
It is well known that there exists a constant $C>0$ such that
\begin{equation}\label{phirho}
  |\widehat{\rho}(x)|\le \frac{C}{(1+|x|)^{1+\a}}\quad\text{and}\quad
|\widehat{\phi}(x)|\le \frac{C}{(1+|x|)^2}\quad\text{for all }x\in \R.
\end{equation}
These estimates together with Proposition~\ref{prex1} imply that $Q_n^\rho$ and $Q_n^\phi$ satisfy assumption~\eqref{a1}.

We now show that the Bochner-Riesz sampling operator $Q_n^\rho$ satisfies assumptions~\eqref{a3} and~\eqref{a4}.

Denote
$$
w_1(\xi)=\frac{1-\rho(\xi)}{\xi^2}\eta(\xi),\qquad w_2(\xi)=\frac{\xi^2\eta(\xi)}{1-\rho(\xi)},
$$
$$
\eta_1(\xi)=\eta(4\xi),\qquad\eta_2(\xi)=\eta(\xi)-\eta_1(\xi),
$$
and
$$
g_1(\xi)=\frac{1-\rho(\xi)}{\xi^2}\eta_1(\xi),\qquad g_2(\xi)=\frac{\eta_2(\xi)}{\xi^2}.
$$
We have
$$
w_1(\xi)=g_1(\xi)+g_2(\xi)-\rho(\xi)g_2(\xi).
$$

Since $g_1$ and $g_2$ belong to $C^\infty(\R)$ and have compact supports, Remark~\ref{reml1} yields $\widehat{g_1}$, $\widehat{g_2}\in\mathcal{R}^*$. The first inequality in~\eqref{phirho} implies directly $\widehat{\rho}\in \mathcal{R}^*$. Furthermore, it follows from
$$
|\widehat{\rho g_2}|=|\widehat{\rho}*\widehat{g_2}|\le \widehat{\rho}^**|\widehat{g_2}|\in \mathcal{R}
$$
that $\widehat{\rho g_2}\in \mathcal{R}^*$. Hence $\widehat{w_1}\in \mathcal{R}^*$.

Similarly, one verifies that $\widehat{w_2}\in \mathcal{R}^*$ using the necessary technical details from~\cite[Proof of Theorem~13]{RS08}.
Then, applying Proposition~\ref{prex2}, we obtain that $Q_n^\rho$ satisfies assumptions~\eqref{a3} and~\eqref{a4}.

The same approach shows that the Fej\'er sampling operator $Q_n^\phi$ satisfies
\begin{equation*}
  \|T-Q_n^\phi T\|_X\le \frac {C_1}n\|\Delta^{1/2}T\|_X, \quad T\in \mathcal{T}_n,
\end{equation*}
and
\begin{equation*}
\frac{C_2}n\|\Delta^{1/2}T\|_X \le \|T-Q_n^\phi T\|_X, \quad T\in \mathcal{T}_n,
\end{equation*}
where $C_1$ and $C_2$ are positive constants independent of $T$ and $n$. Thus, if we additionally assume that the Hilbert transform\footnote{Recall that the periodic Hilbert transform is defined by
$
Hf(x) = \sum_{k\in\mathbb Z} (-i\,\mathrm{sgn}(k))\,\widehat f_k\,e^{ikx}$,
$\mathrm{sgn}(0)=0.
$
} is bounded in $X$, that is $\|\Delta^{1/2}T\|_X\asymp \|T'\|_X$ for any trigonometric polynomial $T$, then we obtain that $Q_n^\phi$ satisfies assumptions~\eqref{a3} and~\eqref{a4} with $s=1$.

Summarizing the above arguments, we get the following theorem, which provides the strong converse inequalities for  $Q_n^\phi$ and $Q_n^\rho$.

\begin{theorem}\label{br}
  Let $X$ be a Banach lattice satisfying Assumption A. Then, for all $f\in X$ and $r\in\N$, we have
  \begin{equation*}
    \|f-Q_n^\rho f\|_{X}+\|f-Q_n^\rho f\|_{X_n}\asymp \|(I-\dot A_{\g/n})^2 f\|_{X}+\|(I-A_{\g/n})^r f\|_{X_n}.
  \end{equation*}
If, additionally, the Hilbert transform $H$ is bounded on $X$, then
\begin{equation*}
    \|f-Q_n^\phi f\|_{X}+\|f-Q_n^\phi f\|_{X_n}\asymp \|(I-\dot A_{\g/n}) f\|_{X}+\|(I-A_{\g/n})^r f\|_{X_n},
  \end{equation*}
Here $\asymp$ denotes a two-sided inequality with positive constants independent of $f$ and $n$.
\end{theorem}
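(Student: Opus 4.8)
The plan is to recognize both families $Q_n^\rho$ and $Q_n^\phi$ as instances of the setting of Theorem~\ref{thC} and then apply that theorem directly. Since the kernels $R_n$ and $F_n$ are trigonometric polynomials of degree $n$, the operators $Q_n^\rho$ and $Q_n^\phi$ depend only on the sample values $f(t_k)$ and land in $\mathcal{T}_n$; thus $Q_n^\rho,Q_n^\phi\colon X_n\to\mathcal{T}_n$, and it remains only to verify assumptions~\eqref{a1}, \eqref{a3}, and~\eqref{a4} with the correct value of the smoothness parameter: $s=2$ for the Bochner--Riesz operator and $s=1$ for the Fej\'er operator. In both cases the constraint $2r\ge s$ of Theorem~\ref{thC} holds for every $r\in\N$, which accounts for the range of $r$ in the statement.

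For $Q_n^\rho$ I would proceed as follows. The decay bound $|\widehat{\rho}(x)|\le C(1+|x|)^{-1-\a}$ from~\eqref{phirho} together with $\a>0$ gives $\widehat{\rho}\in\mathcal{R}^*$, so~\eqref{a1} follows from Proposition~\ref{prex1}. For~\eqref{a3} I would use the decomposition $w_1=g_1+g_2-\rho g_2$ with $g_1,g_2\in C^\infty(\R)$ of compact support: Remark~\ref{reml1} yields $\widehat{g_1},\widehat{g_2}\in\mathcal{R}^*$, while the pointwise bound $|\widehat{\rho g_2}|=|\widehat{\rho}*\widehat{g_2}|\le \widehat{\rho}^* * |\widehat{g_2}|\in\mathcal{R}$ gives $\widehat{\rho g_2}\in\mathcal{R}^*$; since $w_1$ is also of bounded variation, Proposition~\ref{prex2}(i) supplies~\eqref{a3} with $s=2$. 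For~\eqref{a4} one must establish $\widehat{w_2}\in\mathcal{R}^*$ for $w_2(\xi)=\xi^2\eta(\xi)/(1-\rho(\xi))$; here the reciprocal $1/(1-\rho)$ must be controlled near $\xi=0$ and its Fourier transform shown to admit a summable symmetrically decreasing majorant, for which I would adapt the technical estimates of~\cite[Proof of Theorem~13]{RS08}. Then Proposition~\ref{prex2}(ii) gives~\eqref{a4} with $s=2$, and Theorem~\ref{thC} with $s=2$ and any $r\in\N$ yields the first equivalence.

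For $Q_n^\phi$ the bound $|\widehat{\phi}(x)|\le C(1+|x|)^{-2}$ gives $\widehat{\phi}\in\mathcal{R}^*$ and hence~\eqref{a1} by Proposition~\ref{prex1}. Analyzing the multiplier $1-\phi$ via the comparison principle for Fourier multipliers (exactly as in the proof of Proposition~\ref{prex2}) produces the two-sided bound $\|T-Q_n^\phi T\|_X\asymp n^{-1}\|\Delta^{1/2}T\|_X$ for $T\in\mathcal{T}_n$; the operator here is $\Delta^{1/2}$ rather than ordinary differentiation because $1-\phi(\xi)$ is comparable to $|\xi|$, not $|\xi|^2$, near the origin. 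Under the extra hypothesis that the Hilbert transform $H$ is bounded on $X$, one has $\Delta^{1/2}T=H(T')$ on trigonometric polynomials, so $\|\Delta^{1/2}T\|_X\asymp\|T'\|_X$; this converts the previous bound into~\eqref{a3} and~\eqref{a4} with $s=1$, and Theorem~\ref{thC} with $s=1$ and any $r\in\N$ gives the second equivalence.

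The main obstacle is the Fourier-analytic bookkeeping behind~\eqref{a4}: proving $\widehat{w_2}\in\mathcal{R}^*$ for the reciprocal weight in the Bochner--Riesz case, and establishing the fractional-order identity $\Delta^{1/2}T=H(T')$ together with its consequence $\|\Delta^{1/2}T\|_X\asymp\|T'\|_X$ under boundedness of $H$ in the Fej\'er case. Once these facts are in hand, the remaining, structural step — feeding~\eqref{a1}, \eqref{a3}, \eqref{a4} into Theorem~\ref{thC} — is immediate.
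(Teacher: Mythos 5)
Your proposal is correct and follows essentially the same route as the paper: verify \eqref{a1} via \eqref{phirho} and Proposition~\ref{prex1}, verify \eqref{a3} and \eqref{a4} through the multipliers $w_1,w_2$ (using the decomposition $w_1=g_1+g_2-\rho g_2$ and the technical estimates of~\cite{RS08} for $\widehat{w_2}$, respectively reducing the Fej\'er case to $\Delta^{1/2}$ and then to $T'$ via boundedness of $H$), and then invoke Theorem~\ref{thC} with $s=2$ (Bochner--Riesz) and $s=1$ (Fej\'er). The only difference is presentational: you make explicit the identity $\Delta^{1/2}T=H(T')$ behind the equivalence $\|\Delta^{1/2}T\|_X\asymp\|T'\|_X$, which the paper uses implicitly.
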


\end{example}

We note that Theorem~\ref{br} is new even in the classical case $X=L_p(\T)$, $1\le p<\infty$.

\section{Main results in the non-periodic case}
Analogues of the results from Section~\ref{main} remain valid for non-periodic functions $f$  defined on $\R$ and sampling operators
\begin{equation*}
\mathcal{G}_\s f(x):=\sum_{k\in \Z} f(x_{k,\s})\vp_{k,\s}(x),
\end{equation*}
where $\vp_{k,\s} \in \Phi_\s$, $k\in \Z$, and  $\mathcal{X}_\s=(x_{k,\s})_{k\in\Z}$ is a set of distinct points for which there exist positive constants $\g$ and $\g'$ satisfying
\begin{equation*}
  \frac{\g}{\s}\le x_{k+1,\s}-x_{k,\s}\le \frac{\g'}{\s},\quad k\in \Z.
\end{equation*}
For convenience, we set $x_k=x_{k,\s}$ and $\mathcal{G}_0:=0$. 

Similarly to the periodic case, we introduce the following semi-norm
$$
\|f\|_{X_\s(\R)}:=\bigg\|\sum_{k\in \Z}|f(x_k)|\chi_{[x_k,x_{k+1})}\bigg\|_{X(\R)}.
$$
We say that $f\in X_\s(\R)$ if $f\in X(\R)$ and $\|f\|_{X_\s(\R)}<\infty$. We again emphasize that the functions are considered pointwise and not identified almost everywhere.

Furthermore, we assume that, for all $f\in X_\s(\R)$, the series defining
$\mathcal{G}_\s f$ converges in the norm of $X$. To ensure such convergence for sampling operators with non-compactly supported functions $\vp_{k,\s}$,
we additionally assume that the Banach lattice
$X$ has an order continuous norm and the Schwartz space $\mathcal{S}$ is dense in $X(\R)$. Recall
$X$ is said to have an order continuous norm if $f_n \downarrow 0$ a.e. implies $\|f_n\|_X\to 0$ (see, e.g.,~\cite[Ch. 2, Sec. 2.4]{MN91}).

In this section, for convenience, we write $X=X(\R)$ and $X_\s=X_\s(\R)$.

As in the periodic case, to formulate the main results, we use the following assumptions on
$\mathcal{G}_\s$, involving a fixed parameter $s\in \N$:

\begin{equation}\label{a1r}\tag{$A_1'$}
  \|\mathcal{G}_\s f\|_X\le K_1\|f\|_{X_\s},\quad f\in X_\s,\quad \s\ge 1,
\end{equation}

\begin{equation}\label{a2r}\tag{$A_2'$}
  K_2\|f\|_{X_\s}\le \|\mathcal{G}_\s f\|_X,\quad f\in X_\s,\quad \s\ge 1,
\end{equation}

\begin{equation}\label{a3r}\tag{$A_3'$}
  \|g-\mathcal{G}_\s g\|_X\le K_3\s^{-s}\|g^{(s)}\|_X, \quad g\in \mathcal{B}_X^\s,\quad \s\ge 1,
\end{equation}

\begin{equation}\label{a4r}\tag{$A_4'$}
  K_4\s^{-s}\|g^{(s)}\|_X \le \|g-\mathcal{G}_\s g\|_X, \quad g\in \mathcal{B}_X^\s,\quad \s\ge 1.
\end{equation}

\smallskip

\noindent Here $K_1,K_2>0$ depend only on $X$ and $K_3,K_4>0$ depend only on $s$ and $X$.

\smallskip

The proofs of the main results in Section~\ref{main} extend to the spaces
$X(\R)$ and the operators
$\mathcal{G}_\s$ with only routine adjustments. Henceforth, we restrict ourselves to the formulation of the corresponding key statements and to a more detailed discussion of the examples, in order to justify the use of convolution operators and to verify the correctness of the definition of sampling operators given by series in $X(\R)$.

\subsection{Estimates in  terms of special moduli of smoothness}\label{secir}

\begin{theorem}\label{thstr}
Let ${X(\R)}$ be a Banach lattice satisfying Assumption~A, and let $r, s \in \N$ with $2r\ge s$. Suppose that $\mathcal{G}_\s$ satisfies assumptions~\eqref{a1r} and~\eqref{a3r}. Then, for all $f\in X_\s$, we have
\begin{equation*}
  \|f-\mathcal{G}_\s f\|_{X}\le C\(\|(I-\dot A_{\g/\s})^s f\|_{X}+\|(I-A_{\g/\s})^r f\|_{X_\s}\).
\end{equation*}
If, additionally, $\mathcal{G}_\s: X_\s \to \mathcal{B}_X^{\s}$, then
\begin{equation*}
  \|f-\mathcal{G}_\s f\|_{X_\s}\le C\(\|(I-\dot A_{\g/\s})^s f\|_{X}+\|(I-A_{\g/\s})^r f\|_{X_\s}\).
\end{equation*}
In both estimates, the constant $C$ is independent of $f$ and $\s$.
\end{theorem}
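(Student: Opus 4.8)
The plan is to mirror the proof of Theorem~\ref{thst} verbatim, with the circle $\T$ replaced by $\R$, the polynomial subspace $\mathcal{T}_n$ replaced by the band-limited subspace $\mathcal{B}_X^\s$, and each periodic auxiliary result invoked in its non-periodic form. Concretely, one sets $h=\g/\s$ and, following the notation~\eqref{fh}, introduces $f_h = f-(I-A_h)^r f$. The first display in the proof of Theorem~\ref{thst}, namely~\eqref{mo1}, carries over at once: using~\eqref{a1r} in place of~\eqref{a1},
\begin{equation*}
  \|f-\mathcal{G}_\s f\|_X\le \|f-f_h\|_X+\|f_h-\mathcal{G}_\s f_h\|_X+C\|f-f_h\|_{X_\s},
\end{equation*}
where the first term is $\|(I-A_h)^r f\|_X\le C\|(I-\dot A_h)^{2r} f\|_X\le C\|(I-\dot A_h)^{s}f\|_X$ by Lemma~\ref{lemm1}(iii),(ii) together with $2r\ge s$, and the third term is exactly the second summand $\|(I-A_h)^r f\|_{X_\s}$ in the claimed bound.

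Next I would estimate the middle term $\|f_h-\mathcal{G}_\s f_h\|_X$ by the analogue of~\eqref{zv1x}--\eqref{mo2}. Pick $g_\s\in \mathcal{B}_X^\s$ with $\|f-g_\s\|_X\le 2E_\s(f)_X$; such a near-best band-limited approximant exists because $\mathcal{B}_X^\s$ is a subspace of $X$. Then
\begin{equation*}
  \|f_h-\mathcal{G}_\s f_h\|_X\le \|f_h-(g_\s)_h\|_X+\|(g_\s)_h-\mathcal{G}_\s (g_\s)_h\|_X+C\|f_h-(g_\s)_h\|_{X_\s}.
\end{equation*}
The outer two terms on the right are controlled by $C\|f-g_\s\|_X\le C E_\s(f)_X\le C\|(I-\dot A_h)^s f\|_X$, using Lemma~\ref{bxn} (valid on $\R$ by the same averaging argument, or one cites Lemma~\ref{lemmz0r} analogously) and the direct estimate $E_\s(f)_X\le C\|(I-\dot A_h)^s f\|_X$, which is the non-periodic case of Lemma~\ref{lemm2}(i). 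For the middle term one writes, with $w_1$ as above, $\mathcal{G}_\s (g_\s)_h = (g_\s)_h * (\text{kernel})$ is not needed; instead one uses~\eqref{a3r}: since $(g_\s)_h = (g_\s)_h$ lies in $\mathcal{B}_X^\s$ (the operator $A_h$ preserves the spectrum),
\begin{equation*}
  \|(g_\s)_h-\mathcal{G}_\s (g_\s)_h\|_X\le K_3\s^{-s}\|((g_\s)_h)^{(s)}\|_X = K_3\s^{-s}\|((g_\s^{(s)})_h\|_X\le C\s^{-s}\|g_\s^{(s)}\|_X,
\end{equation*}
and then $\s^{-s}\|g_\s^{(s)}\|_X\le C\|(I-\dot A_h)^s f\|_X$ by the Bernstein/Nikolskii--Stechkin inequality of Lemma~\ref{lemm3} (its $\mathcal{B}_X^\s$ version, stated there for $T\in\mathcal{B}_X^n$). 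Collecting these gives $\|f_h-\mathcal{G}_\s f_h\|_X\le C\|(I-\dot A_h)^s f\|_X$, and combining with the first display proves the first inequality.

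For the second inequality, assuming $\mathcal{G}_\s: X_\s\to\mathcal{B}_X^\s$, one repeats the argument leading to~\eqref{st1t}: write
\begin{equation*}
  \|f-\mathcal{G}_\s f\|_{X_\s}\le \|f-g_\s\|_{X_\s}+\|g_\s-\mathcal{G}_\s g_\s\|_{X_\s}+\|\mathcal{G}_\s(g_\s-f)\|_{X_\s},
\end{equation*}
bound the last two $X_\s$-norms of band-limited functions by the corresponding $X$-norms via Lemma~\ref{lemmz0r}, apply~\eqref{a3r} and~\eqref{a1r}, and then invoke the non-periodic analogue of Lemma~\ref{lemt} to control $\|f-g_\s\|_{X_\s}$ by $C(\|(I-\dot A_h)^s f\|_X+\|(I-A_h)^r f\|_{X_\s})$. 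I expect the only genuine subtlety — rather than an obstacle — to be verifying that the non-periodic forms of Lemmas~\ref{lemmz0r}, \ref{bxn}-analogue, \ref{lemm1}, \ref{lemm2}(i), and \ref{lemm3} are available under Assumption~A on $X(\R)$ and that the series defining $\mathcal{G}_\s f_h$ converges in $X$ (guaranteed by the order-continuity and density hypotheses stated in this section, since $f_h\in X_\s$ whenever $f\in X_\s$, as $A_h$ maps $X_\s$ to $X_\s$ by Lemma~\ref{bxn}); all of these are either stated earlier or proved by the same computations, so the proof is routine once the dictionary $\T\leftrightarrow\R$, $\mathcal{T}_n\leftrightarrow\mathcal{B}_X^\s$ is in place.
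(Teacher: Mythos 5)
Your proposal is correct and follows essentially the same route as the paper, which proves Theorem~\ref{thstr} precisely by transferring the proof of Theorem~\ref{thst} via the dictionary $\T\leftrightarrow\R$, $\mathcal{T}_n\leftrightarrow\mathcal{B}_X^\s$, \eqref{a1}/\eqref{a3}$\leftrightarrow$\eqref{a1r}/\eqref{a3r}, with the non-periodic forms of Lemmas~\ref{lemmz0r}, \ref{bxn}, \ref{lemm1}--\ref{lemm3} and the observation that $A_h$ preserves both $\mathcal{B}_X^\s$ and $X_\s$. Only a cosmetic remark: reducing $\|(I-\dot A_h)^{2r}f\|_X$ to $C\|(I-\dot A_h)^{s}f\|_X$ uses Lemma~\ref{lemm1}(i) applied to $(I-\dot A_h)^s f$ (boundedness of the remaining power), not part (ii).
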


Similarly to Section~\ref{seci}, we obtain the following  inverse estimates for approximation by the sampling operators $\mathcal{G}_\s$.

\begin{theorem}\label{thinvr}
Let $X(\R)$ be a Banach lattice satisfying Assumption~A, and let $r, s \in \N$ with $2r\ge s$.
For all $f\in X_\s$, the following statements holds:

\begin{itemize}
  \item[(i)] If $\mathcal{G}_\s$ satisfies assumptions~\eqref{a1r}, \eqref{a2r}, and \eqref{a3r}, then
\begin{equation*}
  \|(I-\dot A_{\g/\s})^s f\|_{X}+\|(I-A_{\g/\s})^r f\|_{X_\s}\le  \frac{C}{\s^s}\sum_{k=0}^{\lfloor\s\rfloor} (k+1)^{s-1}\|f-\mathcal{G}_k f\|_{X}.
\end{equation*}
  \item[(ii)] If $\mathcal{G}_\s$ satisfies assumptions~\eqref{a1r} and \eqref{a3r}, then
\begin{equation*}
\begin{split}
    \|(I-&\dot A_{\g/\s})^s f\|_{X}+\|(I-A_{\g/\s})^r f\|_{X_\s}\\
    &\le C\(\|f-\mathcal{G}_{\s} f\|_{X}+\|f-\mathcal{G}_\s f\|_{X_\s}+\frac{1}{\s^s}\sum_{k=0}^{\lfloor\s\rfloor} (k+1)^{s-1}\|f-\mathcal{G}_{k} f\|_{X}\).
\end{split}
\end{equation*}
\end{itemize}
In both estimates, the constant $C$ is independent of $f$ and $\s$.
\end{theorem}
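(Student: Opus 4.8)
The statement is the $\mathbb{R}$-analogue of Theorem~\ref{thinv}, and the plan is to run the same argument with the routine substitutions: replace $n$ by $\s$, the polynomial spaces $\mathcal{T}_n$ by the band-limited spaces $\mathcal{B}_X^\s$, the best approximation $E_n(\cdot)_X$ by $E_\s(\cdot)_X$, Lemma~\ref{lemmz0} by its non-periodic counterpart Lemma~\ref{lemmz0r}, and Theorem~\ref{thst} by Theorem~\ref{thstr}; the auxiliary Lemmas~\ref{lemm1}, \ref{lemm2}, \ref{lemm3} and the Bernstein inequality~\eqref{ber} are already available on $X(\R)$ for $\Phi_\s=\mathcal{B}_X^\s$. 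The first step is to record the $\mathbb{R}$-analogue of Proposition~\ref{prinv}: under~\eqref{a1r}, \eqref{a2r}, \eqref{a3r} one has
\begin{equation*}
  C_1\|(I-A_{\g/\s})^r f\|_{X_\s}-C_2\|(I-\dot A_{\g/\s})^s f\|_{X}\le\|f-\mathcal{G}_\s f\|_X ,
\end{equation*}
and under~\eqref{a1r} and~\eqref{a3r} alone one has the same bound with $\|f-\mathcal{G}_\s f\|_X$ replaced by $\|f-\mathcal{G}_\s f\|_X+\|f-\mathcal{G}_\s f\|_{X_\s}$. These are obtained exactly as in the proof of Proposition~\ref{prinv}: introduce $f_h=f-(I-A_h)^rf$ with $h=\g/\s$, use Theorem~\ref{thstr} (the $\mathbb{R}$-version of inequality~\eqref{mo2}) to get $\|f_h-\mathcal{G}_\s f_h\|_X\le C\|(I-\dot A_{\g/\s})^s f\|_X$, and then in the first case invoke~\eqref{a2r} together with Lemma~\ref{lemm1}(i),(iii), while in the second case apply Lemma~\ref{lemmz0r} to the band-limited function $\mathcal{G}_\s(f-f_h)\in\mathcal{B}_X^\s$ and the second estimate of Theorem~\ref{thstr}, as in~\eqref{prt1}--\eqref{prt3}.

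For part~(i), I would then proceed as in Theorem~\ref{thinv}(i). The first inverse estimate above gives
\begin{equation*}
  \|(I-\dot A_{\g/\s})^s f\|_{X}+\|(I-A_{\g/\s})^r f\|_{X_\s}\le C\bigl(\|(I-\dot A_{\g/\s})^s f\|_{X}+\|f-\mathcal{G}_\s f\|_X\bigr),
\end{equation*}
so it remains to bound the $K$-functional term $\|(I-\dot A_{\g/\s})^s f\|_X$ alone. Choosing, for each integer $k\ge 0$, a near-best approximant $\vp_k\in\mathcal{B}_X^k$ with $\|f-\vp_k\|_X\le 2E_k(f)_X$, taking $m$ with $2^m\le\s<2^{m+1}$, telescoping $\vp_{2^m}=\sum_{\nu=0}^m(\vp_{2^\nu}-\vp_{2^{\nu-1}})$ with the convention $\vp_{2^{-1}}:=0$, and combining Lemma~\ref{lemm1}(i),(iv) with the Bernstein inequality~\eqref{ber} for $\mathcal{B}_X^\s$ and the monotonicity of $E_k(f)_X$, one arrives at $\|(I-\dot A_{\g/\s})^s f\|_X\le C\s^{-s}\sum_{\nu=0}^{\lfloor\s\rfloor}(\nu+1)^{s-1}E_\nu(f)_X$; since $\mathcal{G}_\nu f\in\mathcal{B}_X^\nu$ we have $E_\nu(f)_X\le\|f-\mathcal{G}_\nu f\|_X$, which closes~(i). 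Part~(ii) is the same argument started from the second inverse estimate, so that the extra term $\|f-\mathcal{G}_\s f\|_X+\|f-\mathcal{G}_\s f\|_{X_\s}$ is carried along.

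Since the whole scheme is a transcription of the periodic proof, there is no conceptual difficulty; the only points needing a little care are bookkeeping ones. One should check that the nodes $\mathcal{X}_\s$, being $\g/\s$-separated and $\g'/\s$-dense, meet the hypothesis of Lemma~\ref{lemmz0r} wherever it is applied (to band-limited functions of the type $\mathcal{G}_\s(f-f_h)$); that the passage between the continuous parameter $\s$ and the discrete summation bound $\lfloor\s\rfloor$ in the dyadic step causes no loss, which is handled via monotonicity of $E_\nu(f)_X$ exactly as in the periodic case; and that each quantity $\|f-\mathcal{G}_k f\|_X$ on the right-hand side is meaningful for $f\in X_\s$ — if one of them is infinite the inequality is vacuous, and otherwise convergence of the series defining $\mathcal{G}_k f$ is guaranteed by the standing order-continuity and density assumptions on $X(\R)$. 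I expect this bookkeeping, rather than any analytic estimate, to be the only obstacle.
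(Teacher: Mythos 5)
Your proposal is correct and takes essentially the same approach as the paper, which proves Theorem~\ref{thinvr} only by remarking that the periodic arguments (Proposition~\ref{prinv} and Theorem~\ref{thinv}, via Theorem~\ref{thstr} and Lemma~\ref{lemmz0r}) extend to $X(\R)$ with routine adjustments --- exactly the transcription you carry out. The one cosmetic point is that, as in the periodic proof, the dyadic telescoping is cleanest with near-best approximants from $\Phi_{2^\nu}$ and the inequality $E(f,\Phi_\nu)_X\le\|f-\mathcal{G}_\nu f\|_X$ (rather than with $E_\nu(f)_X$), which coincides with your version whenever $\Phi_\s=\mathcal{B}_X^\s$.
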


The application of Theorems~\ref{thstr} and~\ref{thinvr} yields the following corollaries.
We denote
$$
\mathcal{B}_X := \bigcup_{\s\ge 1} \mathcal{B}_X^\s.
$$

\begin{corollary}\label{cor1r}
Assume that the set $\mathcal{B}_X$ is dense in $X$. For  all $f\in \bigcap_{\s\ge 1} X_\s$, the following statements hold:

\begin{itemize}
  \item[(i)] If $\mathcal{G}_\s$ satisfies assumptions~\eqref{a1r}, \eqref{a2r}, and~\eqref{a3r}, then
  $$
    \|f-\mathcal{G}_\s f\|_{X}\to 0\quad\text{as } \s\to \infty
  $$
if and only if
\begin{equation*}
   \|(I-A_{\g/\s})^r f\|_{X_\s}\to 0\quad\text{as }\s\to \infty.
\end{equation*}

  \item[(ii)]  If $\mathcal{G}_\s: X_\s \to \mathcal{B}_X^{\s}$ satisfies assumptions~\eqref{a1r} and~\eqref{a3r}, then
   $$
   \|f-\mathcal{G}_\s f\|_{X}+\|f-\mathcal{G}_\s f\|_{X_\s}\to 0\quad\text{as } \s\to \infty
   $$
   if and only if
\begin{equation*}
   \|(I-A_{\g/\s})^r f\|_{X_\s}\to 0\quad\text{as } \s\to \infty.
\end{equation*}
\end{itemize}
\end{corollary}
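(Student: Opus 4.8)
The plan is to follow the proof of Corollary~\ref{cor1} almost verbatim, with Theorem~\ref{thstr} and Theorem~\ref{thinvr} taking the place of Theorem~\ref{thst} and Proposition~\ref{prinv}. Everything rests on the single fact that
\[
\|(I-\dot A_{\g/\s})^s f\|_{X}\to 0\qquad\text{as }\s\to\infty
\]
for every $f\in X$. I would derive this as in the periodic case: density of $\mathcal{B}_X$ in $X$ forces $E_\s(f)_X\to 0$, and the $X(\R)$-version of Lemma~\ref{lemm2}(ii) bounds $\|(I-\dot A_{\g/\s})^s f\|_X$ by $C\s^{-s}\sum_{k=0}^{\lfloor\s\rfloor}(k+1)^{s-1}E_k(f)_X$; since $E_k(f)_X$ is nonincreasing and null, splitting the sum at an index $N$ beyond which $E_k(f)_X<\e$ (the head being $O(N^s\s^{-s})$ and the tail at most $C\e$) shows this bound is itself null. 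One could equally well argue directly that $\|(I-\dot A_{\g/\s})^s g\|_X\le C(\g\rho/\s)^s\|g\|_X\to 0$ for $g\in\mathcal{B}_X^\rho$ (Lemma~\ref{lemm1}(iv) and Bernstein's inequality) and pass to general $f$ via Lemma~\ref{lemm1}(i) and density. This "regular summation" observation --- that a null sequence weighted by $(k+1)^{s-1}/\s^s$ is again null --- is the only mildly technical ingredient, and it is reused below.

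For part (i): if $\|(I-A_{\g/\s})^r f\|_{X_\s}\to 0$, then by Theorem~\ref{thstr} one has $\|f-\mathcal{G}_\s f\|_X\le C\bigl(\|(I-\dot A_{\g/\s})^s f\|_X+\|(I-A_{\g/\s})^r f\|_{X_\s}\bigr)$, and both summands vanish, the first by the displayed fact and the second by hypothesis. Conversely, if $\|f-\mathcal{G}_\s f\|_X\to 0$, then Theorem~\ref{thinvr}(i) gives $\|(I-A_{\g/\s})^r f\|_{X_\s}\le C\s^{-s}\sum_{k=0}^{\lfloor\s\rfloor}(k+1)^{s-1}\|f-\mathcal{G}_k f\|_X$, and the right-hand side is null by the regular-summation observation applied to $a_k=\|f-\mathcal{G}_k f\|_X$; here $f\in\bigcap_{\s\ge1}X_\s$ guarantees that every $\mathcal{G}_k f$ with $k\in\N$ is well defined, and $\mathcal{G}_0=0$.

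Part (ii) is proved identically, using the second estimate of Theorem~\ref{thstr} (valid since $\mathcal{G}_\s:X_\s\to\mathcal{B}_X^\s$), which bounds both $\|f-\mathcal{G}_\s f\|_X$ and $\|f-\mathcal{G}_\s f\|_{X_\s}$ by $C\bigl(\|(I-\dot A_{\g/\s})^s f\|_X+\|(I-A_{\g/\s})^r f\|_{X_\s}\bigr)$ for the forward implication, and Theorem~\ref{thinvr}(ii) for the reverse one; in the latter the two error terms vanish by hypothesis and the Ces\`aro sum $\s^{-s}\sum_{k\le\s}(k+1)^{s-1}\|f-\mathcal{G}_k f\|_X$ vanishes because $\|f-\mathcal{G}_k f\|_X\le\|f-\mathcal{G}_k f\|_X+\|f-\mathcal{G}_k f\|_{X_k}\to 0$. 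The main obstacle is not a genuine difficulty but rather bookkeeping: one must make sure that Lemma~\ref{lemm2}(ii) and Theorems~\ref{thstr} and~\ref{thinvr} are indeed available under the standing non-periodic hypotheses (order-continuous norm, density of $\mathcal{S}$, norm-convergence of the series defining $\mathcal{G}_\s$), so that all the quantities above are finite and the cited estimates apply.
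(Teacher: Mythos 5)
Your proposal is correct, and its skeleton is exactly the one the paper intends: the paper gives no separate proof of Corollary~\ref{cor1r}, declaring it a routine transfer of Corollary~\ref{cor1}, whose proof consists of the key fact $\|(I-\dot A_{\g/\s})^s f\|_X\to 0$ (from density) combined with the direct estimate and an inverse estimate. The one place where you deviate is the converse direction: the paper's periodic argument invokes Proposition~\ref{prinv}, whose non-periodic analogue is never stated, and which yields $\|(I-A_{\g/\s})^r f\|_{X_\s}\le C\bigl(\|(I-\dot A_{\g/\s})^s f\|_X+\|f-\mathcal{G}_\s f\|_X\bigr)$ immediately, with no summation step; you instead route through Theorem~\ref{thinvr}, which costs you the (standard, and correctly executed) regular-summation argument for $\s^{-s}\sum_{k\le\s}(k+1)^{s-1}\|f-\mathcal{G}_k f\|_X$, together with the observation that $f\in\bigcap_{\s\ge1}X_\s$ and \eqref{a1r} make each term finite. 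This trade-off is harmless and arguably cleaner bookkeeping-wise, since Theorem~\ref{thinvr} is actually stated in Section~4; your alternative direct proof of the key fact for $g\in\mathcal{B}_X^\rho$ via Lemma~\ref{lemm1}(iv) and Bernstein's inequality, followed by density, is also a sound way to avoid relying on an unstated non-periodic version of Lemma~\ref{lemm2}(ii).
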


\begin{corollary}\label{cor2r}
  Under the conditions of Theorem~\ref{thinvr}(i), the following properties are equivalent for all $\a \in (0,s)$:
  \begin{itemize}
    \item[(i)]  $\|f-\mathcal{G}_\s f\|_{X}=\mathcal{O}(\s^{-\a})$,
    \item[(ii)] $\|(I-A_{\g/\s})^r f\|_{X_\s}+\|(I-\dot A_{\g/\s})^s f\|_{X}=\mathcal{O}(\s^{-\a})$.
  \end{itemize}
\end{corollary}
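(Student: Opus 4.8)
The plan is to deduce the equivalence from the matching pair consisting of the direct estimate of Theorem~\ref{thstr} and the inverse estimate of Theorem~\ref{thinvr}(i), following the classical scheme relating rates of approximation to smoothness (cf.~\cite[Ch.~7]{DL}). The first observation is that the hypotheses of Theorem~\ref{thinvr}(i), namely assumptions~\eqref{a1r}, \eqref{a2r}, and~\eqref{a3r}, in particular contain those of Theorem~\ref{thstr}, so both results are at our disposal; as in Corollary~\ref{cor1r}, the function $f$ is taken in $\bigcap_{\s\ge1}X_\s$, which is what is needed for all the quantities involved to be finite. For the implication (ii) $\Rightarrow$ (i) I would simply feed the hypothesis into Theorem~\ref{thstr}: if $\|(I-A_{\g/\s})^r f\|_{X_\s}+\|(I-\dot A_{\g/\s})^s f\|_{X}=\mathcal{O}(\s^{-\a})$, then
\[
\|f-\mathcal{G}_\s f\|_{X}\le C\(\|(I-\dot A_{\g/\s})^s f\|_{X}+\|(I-A_{\g/\s})^r f\|_{X_\s}\)=\mathcal{O}(\s^{-\a}),
\]
which is (i). This direction is immediate and needs no new work.

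For the converse (i) $\Rightarrow$ (ii), I would assume $\|f-\mathcal{G}_k f\|_{X}\le M(k+1)^{-\a}$ for all $k\in\Z_+$ (the term $k=0$ contributes only $\|f\|_X$ since $\mathcal{G}_0=0$, which merely fixes $M$), and plug this into Theorem~\ref{thinvr}(i) to obtain
\[
\|(I-\dot A_{\g/\s})^s f\|_{X}+\|(I-A_{\g/\s})^r f\|_{X_\s}\le\frac{C}{\s^s}\sum_{k=0}^{\lfloor\s\rfloor}(k+1)^{s-1}\|f-\mathcal{G}_k f\|_{X}\le\frac{CM}{\s^s}\sum_{k=0}^{\lfloor\s\rfloor}(k+1)^{s-1-\a}.
\]
Since $\a<s$ we have $s-1-\a>-1$, hence $\sum_{k=0}^{N}(k+1)^{s-1-\a}\le C_\a(N+1)^{s-\a}$, and the right-hand side above is $\le C\s^{-\a}$, which is (ii).

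I do not expect a genuine obstacle here: the statement is a routine consequence of having matching direct and inverse theorems, and the only mildly delicate point is the elementary summation estimate, which is precisely where the restriction $\a\in(0,s)$ is used (at $\a=s$ one picks up a logarithmic factor, and for $\a>s$ the sum is no longer controlled by $\s^{-s}$). Finally, the periodic counterpart Corollary~\ref{cor2} is obtained in exactly the same way, with Theorems~\ref{thst} and~\ref{thinv}(i) in place of Theorems~\ref{thstr} and~\ref{thinvr}(i).
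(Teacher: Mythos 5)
Your argument is correct and is exactly the route the paper intends: the paper derives Corollary~\ref{cor2r} simply by "applying Theorems~\ref{thstr} and~\ref{thinvr}(i)", i.e. the direct estimate for (ii)$\Rightarrow$(i) and the inverse sum estimate plus the elementary bound $\sum_{k=0}^{\lfloor\s\rfloor}(k+1)^{s-1-\a}\le C\s^{s-\a}$ (using $\a<s$, with the $k=0$ term handled via $\mathcal{G}_0=0$) for (i)$\Rightarrow$(ii). Your write-up just fills in these routine details, so there is nothing to correct.
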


\begin{corollary}\label{cor2r+}
  Under the conditions of Theorem~\ref{thinvr}(ii) with $\mathcal{G}_\s: X_\s \to \mathcal{B}_X^{\s}$, the following properties are equivalent for all  $\a \in (0,s)$:
  \begin{itemize}
    \item[(i)]  $\|f-\mathcal{G}_\s f\|_{X}+\|f-\mathcal{G}_\s f\|_{X_\s}=\mathcal{O}(\s^{-\a})$,
    \item[(ii)] $\|(I-\dot A_{\g/\s})^s f\|_{X}+\|(I-A_{\g/\s})^r f\|_{X_\s}=\mathcal{O}(\s^{-\a})$.
  \end{itemize}
\end{corollary}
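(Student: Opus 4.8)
The plan is to obtain this equivalence as a direct consequence of the direct estimate of Theorem~\ref{thstr} and the inverse estimate of Theorem~\ref{thinvr}(ii), in complete parallel with the derivation of Corollary~\ref{cor2t} from Theorems~\ref{thst} and~\ref{thinv}(ii) in the periodic case. Throughout, $f$ is understood to lie in $\bigcap_{\s\ge1}X_\s$, so that all the quantities appearing below are finite.

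First I would prove the implication (ii)$\Rightarrow$(i). Since $\mathcal{G}_\s\colon X_\s\to\mathcal{B}_X^\s$, Theorem~\ref{thstr} gives, for every $\s\ge1$,
\[
\|f-\mathcal{G}_\s f\|_X+\|f-\mathcal{G}_\s f\|_{X_\s}\le C\bigl(\|(I-\dot A_{\g/\s})^s f\|_X+\|(I-A_{\g/\s})^r f\|_{X_\s}\bigr),
\]
so a bound $\mathcal{O}(\s^{-\a})$ on the right-hand side transfers at once to the left-hand side, which is precisely assertion (i); note that no restriction on $\a$ is needed for this direction.

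For the converse (i)$\Rightarrow$(ii) I would insert the hypothesis into Theorem~\ref{thinvr}(ii):
\[
\|(I-\dot A_{\g/\s})^s f\|_X+\|(I-A_{\g/\s})^r f\|_{X_\s}\le C\Bigl(\|f-\mathcal{G}_\s f\|_X+\|f-\mathcal{G}_\s f\|_{X_\s}+\frac1{\s^s}\sum_{k=0}^{\lfloor\s\rfloor}(k+1)^{s-1}\|f-\mathcal{G}_k f\|_X\Bigr).
\]
By (i) the first two terms on the right are $\mathcal{O}(\s^{-\a})$; moreover $\|f-\mathcal{G}_k f\|_X\le C(k+1)^{-\a}$ for $k\ge1$, while the $k=0$ summand is just $\|f\|_X$. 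Hence the sum is at most $C\s^{-s}\bigl(1+\sum_{k=1}^{\lfloor\s\rfloor}(k+1)^{s-1-\a}\bigr)$, and since $\a<s$ we have $s-1-\a>-1$, so $\sum_{k=1}^{\lfloor\s\rfloor}(k+1)^{s-1-\a}$ is of order $\s^{s-\a}$ and the whole expression is $\mathcal{O}(\s^{-\a})$. Combining the two directions gives the equivalence.

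The derivation is essentially formal, mirroring the periodic argument; the one step that is not pure bookkeeping is the last estimate, where the range $\a\in(0,s)$ is exactly what keeps the weighted partial sum $\s^{-s}\sum_{k\le\s}(k+1)^{s-1}\|f-\mathcal{G}_k f\|_X$ at the rate $\s^{-\a}$ rather than letting it dominate. This is also the only place where the hypothesis $\a<s$ enters, and is the reason the equivalence is stated precisely in that range.
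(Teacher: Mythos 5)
Your proposal is correct and follows exactly the route the paper intends: Corollary~\ref{cor2r+} is stated as a direct application of the direct estimate in Theorem~\ref{thstr} and the inverse estimate in Theorem~\ref{thinvr}(ii), with the standard summation argument $\s^{-s}\sum_{k\le\s}(k+1)^{s-1-\a}=\mathcal{O}(\s^{-\a})$ for $\a<s$ handling the weighted sum (mirroring the derivation of Corollary~\ref{cor2t} in the periodic case). Your treatment of the $k=0$ term via $\mathcal{G}_0=0$ and the observation that $\a<s$ is needed only in this direction are both consistent with the paper's argument.
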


The corresponding strong converse inequalities for sampling operators on $X(\R)$ have the following form.

\begin{theorem}\label{thCr}
  Let $X(\R)$ be a Banach lattice satisfying Assumption~A, and let $r, s \in \N$ with $2r\ge s$. Assume that $\mathcal{G}_\s: X_\s\to \mathcal{B}_X^{\s}$ satisfies assumptions~\eqref{a1r}, \eqref{a3r}, and~\eqref{a4r}.
  Then, for all $f\in X_\s$, we have
  \begin{equation*}
    \|f-\mathcal{G}_\s f\|_{X}+\|f-\mathcal{G}_\s f\|_{X_\s}\asymp \|(I-\dot A_{\g/\s})^s f\|_{X}+\|(I-A_{\g/\s})^r f\|_{X_\s},
  \end{equation*}
  where $\asymp$ denotes a two-sided inequality with positive constants independent of $f$ and $\s$.
\end{theorem}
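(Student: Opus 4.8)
The plan is to transcribe the proof of Theorem~\ref{thC} to the real line, replacing each periodic ingredient by its $X(\R)$-counterpart; the standing assumptions on $X(\R)$ (order continuous norm, density of $\mathcal S$) and the condition $f\in X_\s$ guarantee that all the objects involved, in particular the series defining $\mathcal{G}_\s f$, are well defined. The upper estimate
$$
\|f-\mathcal{G}_\s f\|_{X}+\|f-\mathcal{G}_\s f\|_{X_\s}\le C\(\|(I-\dot A_{\g/\s})^s f\|_{X}+\|(I-A_{\g/\s})^r f\|_{X_\s}\)
$$
needs no extra argument: it is precisely the conjunction of the two assertions of Theorem~\ref{thstr}, which is applicable since $\mathcal{G}_\s:X_\s\to\mathcal{B}_X^\s$ by hypothesis.

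For the lower estimate I would first establish the $X(\R)$-analogue of Theorem~\ref{thR}: under~\eqref{a1r} and~\eqref{a3r},
$$
\|(I-A_{\g/\s})^r f\|_{X_\s}+\|(I-\dot A_{\g/\s})^s f\|_{X}\le C\(\|f-\mathcal{G}_\s f\|_X+\|f-\mathcal{G}_\s f\|_{X_\s}+\s^{-s}\|(\mathcal{G}_\s f)^{(s)}\|_X\).
$$
This is proved just as Theorem~\ref{thR} was deduced from Theorems~\ref{thK} and~\ref{thst}, once the $\R$-versions of the auxiliary results are in place: Lemma~\ref{lemmz0r} replaces Lemma~\ref{lemmz0}, the $\R$-cases of Lemmas~\ref{lemm1}--\ref{lemm3} are already available, and the $\R$-case of Lemma~\ref{lemder} is the one indicated at the end of the proof of that lemma (see also~\cite{T89}); in particular one first transcribes the $K$-functional realization of Theorem~\ref{thK} to $X(\R)$. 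The upper half of this realization identity also uses $\s^{-s}\|(\mathcal{G}_\s f)^{(s)}\|_X\le C(\|f-\mathcal{G}_\s f\|_X+\|(I-\dot A_{\g/\s})^s f\|_X)$, which follows from Lemma~\ref{lemm3}(ii) applied to $\mathcal{G}_\s f\in\mathcal{B}_X^\s$ together with Lemma~\ref{lemm1}(i).

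It then remains to absorb the term $\s^{-s}\|(\mathcal{G}_\s f)^{(s)}\|_X$ using the inverse-stability hypothesis~\eqref{a4r}, exactly as in the periodic proof. Since $\mathcal{G}_\s f\in\mathcal{B}_X^\s$ and, by Lemma~\ref{lemmz0r}, $\mathcal{G}_\s f\in X_\s$, assumption~\eqref{a4r} applied to $g=\mathcal{G}_\s f$ together with the linearity of $\mathcal{G}_\s$ gives
$$
\s^{-s}\|(\mathcal{G}_\s f)^{(s)}\|_X\le C\|\mathcal{G}_\s f-\mathcal{G}_\s(\mathcal{G}_\s f)\|_X=C\|\mathcal{G}_\s(f-\mathcal{G}_\s f)\|_X,
$$
and then~\eqref{a1r}, legitimate because $f-\mathcal{G}_\s f\in X_\s$, yields $\|\mathcal{G}_\s(f-\mathcal{G}_\s f)\|_X\le C\|f-\mathcal{G}_\s f\|_{X_\s}$. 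Substituting this back into the displayed realization estimate produces the lower bound, hence the asserted equivalence. I expect no genuinely new idea to be needed beyond the periodic argument; the main obstacle is the bookkeeping of carrying Theorems~\ref{thK} and~\ref{thR} and Lemma~\ref{lemder} over to $\R$ with constants independent of $f$ and $\s$, and of checking that every invocation of~\eqref{a1r} and~\eqref{a4r} is applied to a function lying in $X_\s$ (resp.\ $\mathcal{B}_X^\s$).
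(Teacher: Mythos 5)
Your proposal is correct and follows essentially the same route as the paper: the paper itself proves Theorem~\ref{thCr} by declaring the periodic argument of Theorem~\ref{thC} (upper bound from Theorem~\ref{thst}/\ref{thstr}, lower bound from the realization result of Theorem~\ref{thR} plus absorption of $\s^{-s}\|(\mathcal{G}_\s f)^{(s)}\|_X$ via \eqref{a4r} and \eqref{a1r} applied to $\mathcal{G}_\s(f-\mathcal{G}_\s f)$) to carry over with routine adjustments, which is exactly what you do. Your extra care in checking that $\mathcal{G}_\s f\in\mathcal{B}_X^\s\cap X_\s$ and $f-\mathcal{G}_\s f\in X_\s$ before invoking \eqref{a4r} and \eqref{a1r} is a welcome, if implicit in the paper, verification.
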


\subsection{Estimates in terms of the errors of best and best one-sided approximations}

\begin{theorem}\label{thG1r}
Let $X(\R)$ be a Banach lattice satisfying Assumption~A. For all $f\in B\cap X$, the following statements hold:
\begin{itemize}
  \item[(i)]  If $\mathcal{G}_\s$ satisfies~\eqref{a1r} and~\eqref{a3r} for some $s\in \N$, then
\begin{equation*}
  \|f-\mathcal{G}_\s f\|_{X}\le C \(\widetilde{{E}}_\s(f)_{X}+\|(I-\dot A_{\g/\s})^s f\|_{X}\).
\end{equation*}
  \item[(ii)] If $\mathcal{G}_\s$ satisfies~\eqref{a1r} and  $\mathcal{G}_\s g = g$ for all $g\in \mathcal{B}_X^\s$, then
\begin{equation*}
  \|f-\mathcal{G}_\s f\|_{X}\le C \widetilde{{E}}_\s(f)_{X}.
\end{equation*}
\end{itemize}
In both estimates, the constant $C$ is independent of $f$ and $\s$.
\end{theorem}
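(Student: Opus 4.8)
The plan is to reproduce the argument of Theorem~\ref{thG1} almost verbatim, replacing the periodic tools by their counterparts on $\R$: Lemma~\ref{lemmz0} by the Marcinkiewicz--Zygmund inequality Lemma~\ref{lemmz0r} for $\Phi_\s=\mathcal{B}_X^\s$, and using the Nikolskii--Stechkin--Boas and stability estimates for band-limited functions from Lemmas~\ref{lemm3}(ii) and~\ref{lemm1}(i), both of which hold in $X(\R)$ under Assumption~A. We may assume $\widetilde{E}_\s(f)_X<\infty$, since otherwise both inequalities are trivial. Fix $q_\s,Q_\s\in\mathcal{B}_X^\s$ with $q_\s\le f\le Q_\s$ on $\R$ and $\|Q_\s-q_\s\|_X\le 2\widetilde{E}_\s(f)_X$.

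First I would check that $f\in X_\s$, so that $\mathcal{G}_\s f$ is meaningful and $\|f-q_\s\|_{X_\s}$ is finite. Since $q_\s\in\mathcal{B}_X^\s$, Lemma~\ref{lemmz0r} gives $\|q_\s\|_{X_\s}\le C\|q_\s\|_X<\infty$; and from $0\le f-q_\s\le Q_\s-q_\s$ together with the lattice property one gets, pointwise,
\begin{equation*}
  \sum_k|f(x_k)-q_\s(x_k)|\chi_{[x_k,x_{k+1})}\le\sum_k|Q_\s(x_k)-q_\s(x_k)|\chi_{[x_k,x_{k+1})},
\end{equation*}
hence $\|f-q_\s\|_{X_\s}\le C\|Q_\s-q_\s\|_X\le C\widetilde{E}_\s(f)_X$ by Lemma~\ref{lemmz0r} applied to $Q_\s-q_\s\in\mathcal{B}_X^\s$. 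Consequently $\|f\|_{X_\s}<\infty$, and the series defining $\mathcal{G}_\s f$ converges by the standing assumptions of Section~4.

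Next I would split
\begin{equation*}
  \|f-\mathcal{G}_\s f\|_X\le\|f-q_\s\|_X+\|q_\s-\mathcal{G}_\s q_\s\|_X+\|\mathcal{G}_\s(q_\s-f)\|_X.
\end{equation*}
The first term is $\le\|Q_\s-q_\s\|_X\le2\widetilde{E}_\s(f)_X$ by the lattice property. The third term is, by~\eqref{a1r} and the bound just established, $\le K_1\|f-q_\s\|_{X_\s}\le C\widetilde{E}_\s(f)_X$. For the middle term, \eqref{a3r} gives $\|q_\s-\mathcal{G}_\s q_\s\|_X\le K_3\s^{-s}\|q_\s^{(s)}\|_X$; Lemma~\ref{lemm3}(ii) yields $\s^{-s}\|q_\s^{(s)}\|_X\le C\|(I-\dot A_{\g/\s})^s q_\s\|_X$, and writing $q_\s=(q_\s-f)+f$ and invoking Lemma~\ref{lemm1}(i),
\begin{equation*}
  \|(I-\dot A_{\g/\s})^s q_\s\|_X\le C\|q_\s-f\|_X+\|(I-\dot A_{\g/\s})^s f\|_X\le C\bigl(\widetilde{E}_\s(f)_X+\|(I-\dot A_{\g/\s})^s f\|_X\bigr),
\end{equation*}
using once more $\|q_\s-f\|_X\le\|Q_\s-q_\s\|_X$. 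Collecting the three bounds proves (i). For (ii), the hypothesis $\mathcal{G}_\s g=g$ for $g\in\mathcal{B}_X^\s$ makes the middle term vanish, and the other two bounds give $\|f-\mathcal{G}_\s f\|_X\le C\widetilde{E}_\s(f)_X$ directly, without any use of Assumption~A beyond Lemma~\ref{lemmz0r} (cf.\ Remark~\ref{remG}).

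The only point that is not purely mechanical is the $X_\s$-bookkeeping in the second paragraph: one must deduce $f\in X_\s$ (and the finiteness of $\|f-q_\s\|_{X_\s}$) solely from $f\in B\cap X$ and $\widetilde{E}_\s(f)_X<\infty$, which is exactly what the lattice property combined with Lemma~\ref{lemmz0r} delivers. Everything else is a line-by-line transcription of the proof of Theorem~\ref{thG1}, so I do not anticipate any genuine obstacle.
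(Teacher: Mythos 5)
Your proposal is correct and follows essentially the same route as the paper, which proves the periodic Theorem~\ref{thG1} by exactly this decomposition ($f-q_\s$, $q_\s-\mathcal{G}_\s q_\s$, $\mathcal{G}_\s(q_\s-f)$ with Lemmas~\ref{lemm3}(ii), \ref{lemm1}(i) and the MZ inequality) and then states that the non-periodic case follows by routine adjustments, i.e.\ replacing Lemma~\ref{lemmz0} by Lemma~\ref{lemmz0r} and $\mathcal{T}_n$ by $\mathcal{B}_X^\s$ as you do. Your explicit verification that $f\in X_\s$ (so that $\mathcal{G}_\s f$ is well defined and \eqref{a1r} applies) is a detail the paper leaves implicit, and it is handled correctly.
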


\begin{corollary}\label{thG1corr}
Let $X(\R)$ be a Banach lattice satisfying Assumption~A, and let $s\in \N$. For all $f\in X^s$, the following statements hold:
\begin{itemize}
  \item[(i)]  If $\mathcal{G}_\s$ satisfies the assumptions of Theorem~\ref{thG1r}(i), then
\begin{equation*}
  \|f-\mathcal{G}_\s f\|_{X}\le \frac{C}{\s^s} \|f^{(s)}\|_{X}.
\end{equation*}

  \item[(ii)] If $\mathcal{G}_\s$ satisfies the assumptions of Theorem~\ref{thG1r}(ii), then
\begin{equation*}
  \|f-\mathcal{G}_\s f\|_{X}\le \frac{C}{\s^s} {E}_\s(f^{(s)})_{X}.
\end{equation*}
\end{itemize}
In both estimates, the constant $C$ is independent of $f$ and $\s$.
\end{corollary}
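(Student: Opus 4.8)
The plan is to follow the proof of Corollary~\ref{thG1cor} essentially verbatim, replacing trigonometric polynomials of degree $n$ by band-limited functions of order $\s$ and the periodic auxiliary results by their $X(\R)$ versions. Both parts reduce, via Theorem~\ref{thG1r}, to estimating the error of best one-sided approximation $\widetilde{E}_\s(f)_X$ and, in part~(i), the quantity $\|(I-\dot A_{\g/\s})^s f\|_X$, each by a multiple of $\s^{-s}\|f^{(s)}\|_X$ (respectively $\s^{-s}E_\s(f^{(s)})_X$).

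For part~(ii) I would apply Theorem~\ref{thG1r}(ii) to get $\|f-\mathcal{G}_\s f\|_X\le C\,\widetilde{E}_\s(f)_X$, and then invoke Lemma~\ref{lemder}, whose proof (as noted there) carries over from $\Omega=\T$ to $\Omega=\R$ with band-limited functions in place of polynomials, giving $\widetilde{E}_\s(f)_X\le C\s^{-s}E_\s(f^{(s)})_X$; chaining the two inequalities yields the assertion. For part~(i) I would start from Theorem~\ref{thG1r}(i), which gives $\|f-\mathcal{G}_\s f\|_X\le C(\widetilde{E}_\s(f)_X+\|(I-\dot A_{\g/\s})^s f\|_X)$, bound the first summand as in part~(ii) and then, crudely, by $C\s^{-s}\|f^{(s)}\|_X$ using the trivial estimate $E_\s(f^{(s)})_X\le\|f^{(s)}\|_X$ (take the zero function as band-limited approximant), and bound the second summand by Lemma~\ref{lemm1}(iv) (the second inequality there, with $r=s$), namely $\|(I-\dot A_{\g/\s})^s f\|_X\le C(\g/\s)^s\|f^{(s)}\|_X$. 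Adding the two estimates gives the displayed bound, the factor $\g^s$ being absorbed into $C$.

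Since this chain of inequalities is purely mechanical, the only point that deserves attention --- and hence the (minor) main obstacle --- is to verify that the hypotheses of Theorem~\ref{thG1r} are genuinely in force for $f\in X^s$: that theorem requires $f\in B\cap X$, and the expression $\mathcal{G}_\s f$ presupposes $f\in X_\s$ as set up at the beginning of this section. This is where one invokes the standing order-continuity and density assumptions together with the fact that $f^{(s-1)}$ is locally absolutely continuous, ensuring that all the norms occurring above are finite as soon as $\|f^{(s)}\|_X<\infty$. Everything else is an immediate combination of Theorem~\ref{thG1r}, Lemma~\ref{lemder}, and Lemma~\ref{lemm1}(iv), exactly as in the periodic case of Corollary~\ref{thG1cor}.
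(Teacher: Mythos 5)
Your proposal is correct and follows essentially the same route as the paper: the paper gives no separate proof in the non-periodic case (it states that the arguments of Section~3 carry over routinely), and its periodic proof of Corollary~\ref{thG1cor} is exactly your chain — Theorem~\ref{thG1r} (resp.\ \ref{thG1}) combined with Lemma~\ref{lemder} (whose proof covers $\Omega=\R$ via the integral Bernoulli function) and, for part~(i), Lemma~\ref{lemm1}(iv). Your closing remark about ensuring $f\in B\cap X$ and $f\in X_\s$ is a reasonable point of care that the paper itself leaves implicit.
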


\begin{theorem}\label{thriGr}
  Let $X(\R)$ be a rearrangement-invariant Banach lattice. Assume that $\mathcal{G}_\s$ satisfies assumptions~\eqref{a1r} and~\eqref{a3r} for some $s\in \N$.
Then, for all $f\in X_\s$, we have
  \begin{equation*}
    \|f-\mathcal{G}_\s f\|_{X}\le C\(\w_s(f,1/\s)_{X}+\sum_{\nu=1}^\infty \|\d_{2^{-\nu}}\|_{X\to X} E_{2^{\nu-1}\s}(f)_{X}\),
  \end{equation*}
where the constant $C$ is independent of $f$ and $\s$. Moreover, if additionally, $\mathcal{G}_\s g=g$ for all $g\in \mathcal{B}_X^\s$, then the above estimate holds without the modulus $\w_s(f,1/\s)_{X}$ on the right-hand side.
\end{theorem}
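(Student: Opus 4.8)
The strategy is to transcribe the proof of Theorem~\ref{thriG} to the real line, replacing trigonometric polynomials of degree $2^\nu n$ by band-limited functions of order $2^\nu\s$, Lemma~\ref{lemmz0} by its $\R$-analogue Lemma~\ref{lemmz0r}, and the periodic Jackson and Bernstein estimates by their counterparts for $\mathcal{B}_X^\s$. A rearrangement-invariant Banach lattice is translation invariant, so $\w_s(f,1/\s)_X$ is meaningful, and it satisfies Assumption~A, so Lemma~\ref{lemmz0r} and Lemma~\ref{lemm3} apply and the non-periodic analogue of Lemma~\ref{lemm2}(i) holds (see~\cite{V23}); moreover $\|g\|_{X_\s}\le C\|g\|_X<\infty$ for every $g\in\mathcal{B}_X^\s$ by Lemma~\ref{lemmz0r}, so the $X_\s$-seminorms below are finite. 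As in the periodic case, one may assume $E_\s(f)_X\to0$ as $\s\to\infty$, since otherwise the right-hand side is infinite.

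The central step, the analogue of~\eqref{ri1}, is the dilation estimate
\begin{equation*}
  \|g\|_{X_\s}\le C\,\|\d_{2^{-\nu}}\|_{X\to X}\,\|g\|_X,\qquad g\in\mathcal{B}_X^{2^\nu\s},\quad\nu\in\N,
\end{equation*}
with $C$ independent of $\nu$ and $\s$. As in~\eqref{ri2}, one writes $\sum_{k\in\Z}|g(x_k)|\chi_{[x_k,x_{k+1})}=\d_{2^{-\nu}}\bigl(\sum_{k\in\Z}|g(x_k)|\chi_{[2^{-\nu}x_k,2^{-\nu}x_{k+1})}\bigr)$, which pulls out $\|\d_{2^{-\nu}}\|_{X\to X}$, and then dominates the inner sum by introducing a refined node set $\mathcal{Y}_{\s,\nu}=(y_\ell)_{\ell\in\Z}\subset\R$ containing $(x_k)_{k\in\Z}\cup(2^{-\nu}x_k)_{k\in\Z}$ with $\tfrac{\tau}{2^\nu\s}\le y_{\ell+1}-y_\ell\le\tfrac{\tau'}{2^\nu\s}$ for constants $\tau,\tau'$ depending only on $\g,\g'$, and applies Lemma~\ref{lemmz0r} with $\Phi_{2^\nu\s}=\mathcal{B}_X^{2^\nu\s}$.

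Granting this, the estimate is assembled exactly as in the periodic case. Choosing $g_\mu\in\mathcal{B}_X^\mu$ with $\|f-g_\mu\|_X\le2E_\mu(f)_X$ for $\mu\ge1$, assumptions~\eqref{a1r} and~\eqref{a3r} give
\begin{equation*}
  \|f-\mathcal{G}_\s f\|_X\le\|f-g_\s\|_X+K_3\s^{-s}\|g_\s^{(s)}\|_X+K_1\|f-g_\s\|_{X_\s};
\end{equation*}
the Jackson estimate $E_\s(f)_X\le C\|(I-\dot A_{\g/\s})^s f\|_X$, the Bernstein-type inequality of Lemma~\ref{lemm3}, and the equivalence $\|(I-\dot A_{\g/\s})^s f\|_X\asymp\w_s(f,1/\s)_X$ (cf.~\eqref{k1} together with $K_s\asymp\w_s$ for translation-invariant $X$) bound the first two terms by $C\,\w_s(f,1/\s)_X$. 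For the last term, telescoping $f-g_\s=\sum_{\nu\ge1}(g_{2^\nu\s}-g_{2^{\nu-1}\s})$ (convergent because $E_\mu(f)_X\to0$), using $g_{2^\nu\s}-g_{2^{\nu-1}\s}\in\mathcal{B}_X^{2^\nu\s}$, $\|g_{2^\nu\s}-g_{2^{\nu-1}\s}\|_X\le CE_{2^{\nu-1}\s}(f)_X$, and the dilation estimate, yields $\|f-g_\s\|_{X_\s}\le C\sum_{\nu\ge1}\|\d_{2^{-\nu}}\|_{X\to X}E_{2^{\nu-1}\s}(f)_X$; combining proves the theorem. If in addition $\mathcal{G}_\s g=g$ for all $g\in\mathcal{B}_X^\s$, then $f-\mathcal{G}_\s f=(f-g_\s)-\mathcal{G}_\s(f-g_\s)$, so~\eqref{a1r} gives $\|f-\mathcal{G}_\s f\|_X\le2E_\s(f)_X+K_1\|f-g_\s\|_{X_\s}$, and since $\|\d_{1/2}\|_{X\to X}\ge1$ (because $\chi_{[0,1]}\le\chi_{[0,2]}=\d_{1/2}\chi_{[0,1]}$) the term $2E_\s(f)_X$ is absorbed into the $\nu=1$ summand of the series, so the modulus term disappears.

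The main obstacle, as in the periodic proof, is the dilation estimate: one must build $\mathcal{Y}_{\s,\nu}$ with spacing constants that do not degrade as $\nu\to\infty$ and correctly invoke the band-limited Marcinkiewicz--Zygmund inequality at scale $2^\nu\s$. Everything else is a routine transcription of the periodic argument, the only additional point being the norm-convergence of the telescoping series, which is handled exactly as in Theorem~\ref{thriG}.
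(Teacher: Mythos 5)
Your proposal is correct and follows essentially the same route as the paper, which proves the periodic Theorem~\ref{thriG} in detail and explicitly states that the non-periodic results are obtained by the same argument with routine adjustments (band-limited functions in place of $\mathcal{T}_n$, Lemma~\ref{lemmz0r} in place of Lemma~\ref{lemmz0}, and the $\R$-versions of Lemmas~\ref{lemm2} and~\ref{lemm3}); your dilation estimate, refined node set $\mathcal{Y}_{\s,\nu}$, and telescoping of near-best band-limited approximants mirror \eqref{ri1}--\eqref{ri5} exactly. Your explicit absorption of the $E_\s(f)_X$ term into the $\nu=1$ summand via $\|\d_{1/2}\|_{X\to X}\ge 1$ for the ``moreover'' part is a correct filling-in of a detail the paper leaves implicit.
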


\subsection{Examples} Similarly to Section~\ref{secext}, we discuss several classical examples of sampling operators on $\R$, for which the above results can be applied.

We denote
\begin{equation}\label{Qr}
      \mathcal{Q}_\s^\vp f(x):=\sum_{k\in\Z}f\(\s^{-1}k\)K^\vp\(\s x-k\),
\end{equation}
where $K^\vp (x):=\widehat{\vp}(x)$ and $\vp$ is an even, bounded function with a compact support.

In what follows, we set
$$
K_\s^\vp(x):=\s K^\vp (\s x).
$$

Since $K_\s^\vp \in \mathcal{B}_X^{\l\s}$ for some $\l>0$, it follows that, if $X$ satisfies Assumption~A, then $K_\s^\vp (\cdot-t)\in X$ for all $t\in \R$. Indeed, applying the Taylor formula and the Bernstein-type inequality from Lemma~\ref{lemm3}(i), we obtain
\begin{equation*}
  \begin{split}
      \|K_\s^\vp (\cdot-t)\|_X\le\sum_{\nu=0}^\infty \frac{\|(K_\s^\vp)^{(\nu)}\|_X}{\nu!}t^\nu\le e^{C\l\s t}\|K_\s^\vp\|_X<\infty.
   \end{split}
\end{equation*}
In particular, this shows that, for every $g\in X'$, the convolution $g*K_\s^\vp$ is well defined in the usual sense.

We now prove several propositions that can be used to verify assumption~\eqref{a1r}--\eqref{a4r} for the operators~$\mathcal{Q}_\s^\vp$.

\begin{proposition}\label{prex1r}
   Let $X(\R)$ be a Banach lattice satisfying Assumption~A such that
   \begin{equation}\label{zv1r}
     \sup_{\s\ge 1,\, \|g\|_{X'}\le 1}\|g*K_\s^\vp\|_{X'}<\infty,
   \end{equation}
   then, for all $f\in X_\s$, the series in~\eqref{Qr} converges in $X$, and the operator $\mathcal{Q}_\s^\vp$ satisfies~\eqref{a1r}.

   In particular, \eqref{zv1r} holds if $\widehat{\vp}\in \mathcal{R}^*$.
\end{proposition}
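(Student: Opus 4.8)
The plan is to transcribe the duality argument from the proof of Proposition~\ref{prex1} to the real line; the one genuinely new feature is that the series defining $\mathcal{Q}_\s^\vp f$ is now infinite, so its norm-convergence has to be established rather than taken for granted. Fix $f\in X_\s$ and, for $N\in\N$, set $T_Nf(x):=\sum_{|k|\le N}f(\s^{-1}k)K^\vp(\s x-k)$, a finite sum of elements of $X$ (recall from the discussion preceding the proposition that $K_\s^\vp(\cdot-t)\in X$ for every $t$, and that $K^\vp(\s x-k)=\tfrac1\s K_\s^\vp(x-\s^{-1}k)$).

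First I would pair $T_Nf$ against $g\in X'$ with $\|g\|_{X'}\le1$. Since $K^\vp=\widehat\vp$ is even, so is $K_\s^\vp$, and hence
\[
\int_\R T_Nf(x)\,\overline{g(x)}\,dx=\frac1\s\sum_{|k|\le N}f(\s^{-1}k)\,\bigl(K_\s^\vp*\overline g\bigr)(\s^{-1}k).
\]
Writing $I_k:=[\s^{-1}k,\s^{-1}(k+1))$, a partition of $\R$ into intervals of length $\s^{-1}$, the right-hand side is in modulus at most $\int_\R\Phi\Psi$, where $\Phi=\sum_k|f(\s^{-1}k)|\chi_{I_k}$ and $\Psi=\sum_\ell|(K_\s^\vp*\overline g)(\s^{-1}\ell)|\chi_{I_\ell}$; since $\Phi\in X$ with $\|\Phi\|_X=\|f\|_{X_\s}$, the definition of the associate norm gives $|\int_\R T_Nf\,\overline g\,dx|\le\|f\|_{X_\s}\,\|K_\s^\vp*\overline g\|_{X_\s'}$, where $\|h\|_{X_\s'}:=\|\sum_{k\in\Z}|h(\s^{-1}k)|\chi_{I_k}\|_{X'}$. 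Now $K_\s^\vp*\overline g$ is well defined, lies in $X'$ with $\|K_\s^\vp*\overline g\|_{X'}\le C$ by~\eqref{zv1r}, and is band-limited with $\supp\widehat{K_\s^\vp*\overline g}\subset\supp\widehat{K_\s^\vp}\subset[-\l\s,\l\s]$, i.e.\ $K_\s^\vp*\overline g\in\mathcal{B}_{X'}^{\l\s}$. Since $X'$ also satisfies Assumption~A, Lemma~\ref{lemmz0r} applied to the grid $(\s^{-1}k)_{k}$, whose mesh $\s^{-1}$ is $\le\l/(\l\s)$, yields $\|K_\s^\vp*\overline g\|_{X_\s'}\le C\|K_\s^\vp*\overline g\|_{X'}\le C$ with $C$ independent of $\s$ and $g$. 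Taking the supremum over $g$ gives the uniform bound $\|T_Nf\|_X\le C\|f\|_{X_\s}$, with $C$ independent of $N$ and $\s$.

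The same estimate applied to the truncated tail gives, for $N>M$,
\[
\|T_Nf-T_Mf\|_X\le C\Bigl\|\sum_{|k|>M}|f(\s^{-1}k)|\chi_{I_k}\Bigr\|_X,
\]
and the right-hand side tends to $0$ as $M\to\infty$ because $\sum_{|k|>M}|f(\s^{-1}k)|\chi_{I_k}\downarrow0$ a.e.\ and $X$ has order continuous norm; hence $(T_Nf)_N$ is Cauchy in $X$, the series converges to $\mathcal{Q}_\s^\vp f\in X$, and letting $N\to\infty$ in the uniform bound yields $\|\mathcal{Q}_\s^\vp f\|_X\le C\|f\|_{X_\s}$, which is~\eqref{a1r}. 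For the last assertion, if $\widehat\vp\in\mathcal{R}^*$ with summable symmetrically decreasing majorant $\widehat\vp^*$, then $|K_\s^\vp(x)|=\s|\widehat\vp(\s x)|\le\s\widehat\vp^*(\s x)$ and $\s\widehat\vp^*(\s\cdot)$ is symmetrically decreasing with $\|\s\widehat\vp^*(\s\cdot)\|_{L_1(\R)}=\|\widehat\vp^*\|_{L_1(\R)}$, so $K_\s^\vp\in\mathcal{R}^*$ with majorant $L_1$-norm bounded uniformly in $\s$; since $X'$ satisfies Assumption~A, Lemma~\ref{lemS} applied in $X'$ gives $\|g*K_\s^\vp\|_{X'}\le C\|\widehat\vp^*\|_{L_1(\R)}\|g\|_{X'}$, i.e.\ exactly~\eqref{zv1r}.

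I expect the main obstacle to be not any single estimate but the interplay between the two halves: verifying that the duality pairing is legitimate for each finite truncation $T_Nf$ (it is, being a finite sum of $X$-functions paired with an $X'$-function) and then extracting norm-convergence of the full series from the uniform bound together with the order continuity of the norm. The remaining analytic content is a routine transcription of Proposition~\ref{prex1}, with Lemma~\ref{lemmz0r} playing the role of Lemma~\ref{lemmz0} and the uniform mesh $\s^{-1}$ converting the discrete sums into integrals of step functions.
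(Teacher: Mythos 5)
Your proposal is correct and follows essentially the same route as the paper: duality against $g\in X'$, Hölder's inequality for associate spaces applied to the step functions built on the grid $(\s^{-1}k)_k$, the Marcinkiewicz--Zygmund-type Lemma~\ref{lemmz0r} in $X'$ (using that $X'$ inherits Assumption~A and that $K_\s^\vp*\overline g$ is band-limited), order continuity of the norm to get convergence of the tails, and Lemma~\ref{lemS} in $X'$ for the case $\widehat\vp\in\mathcal{R}^*$. The only difference is cosmetic (symmetric truncations $T_N$ versus the paper's partial sums over $m\le|k|\le n$), and your explicit treatment of the evenness of $K_\s^\vp$ and of the uniform mesh constant matches what the paper leaves implicit.
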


\begin{proof}
  For $m,n\in \Z_+$, $m\le n$, we set
  $$
  \mathcal{Q}_{\s, m,n}^\vp f(x):=\sum_{m\le |k|\le n} f\(t_k\)K^\vp\(\s(x-t_k)\),\quad t_k=\frac{k}{\s}.
  $$
  Using the same arguments as in the proof of Proposition~\ref{prex1} (see~\eqref{prex1e1} and~\eqref{prex1e2-}) together with Lemma~\ref{lemmz0r},  we obtain
  \begin{equation*}
  \begin{split}
    \|\mathcal{Q}_{\s, m,n}^\vp f\|_X &= \sup_{\|g\|_{X'}\le 1} \bigg|\int_{\R}\mathcal{Q}_{\s, m,n}^\vp f(x) \overline{g(x)}dx\bigg|\\
    &= \sup_{\|g\|_{X'}\le 1} \bigg|\sum_{m\le |k|\le n} f(t_k)\int_\R \overline{g(x)}K^\vp(\s(x-t_k))dx\bigg|\\
    &\le \sup_{\|g\|_{X'}\le 1} \s^{-1}\sum_{m\le |k|\le n} |f(t_k)(\overline{g}*K_\s^\vp) (t_k)|\\
    &\le \sup_{\|g\|_{X'}\le 1} \bigg\|\sum_{m\le |k|\le n} |f(t_k)|\chi_{[t_k,t_{k+1})}\bigg\|_X\\
    &\qquad\qquad\qquad\qquad\qquad\qquad\times\bigg\|\sum_{m\le |k|\le n} |(\overline{g}*K_\s^\vp) (t_k)|\chi_{[t_k,t_{k+1})}\bigg\|_{X'}
  \end{split}
  \end{equation*}
  and
  \begin{equation*}
  \begin{split}
    \bigg\|\sum_{m\le |k|\le n} |(\overline{g}*K_\s^\vp) (t_k)|\chi_{[t_k,t_{k+1})}\bigg\|_{X'}&\le \|\overline{g}*K_\s^\vp \|_{X_\s'}\\
    &\le C\|\overline{g}*K_\s^\vp\|_{X'}\le C\|\overline{g}\|_{X'} \le C,
  \end{split}
  \end{equation*}
  which yields
  \begin{equation}\label{prex1re2}
  \begin{split}
    \|\mathcal{Q}_{\s, m,n}^\vp f\|_X &\le C\bigg\|\sum_{|k|\ge m} |f(t_k)|\chi_{[t_k,t_{k+1})}\bigg\|_X.
  \end{split}
  \end{equation}
 Since $\sum_{k\in\mathbb Z}|f(t_k)|\chi_{[t_k,t_{k+1})} \in X$ and the corresponding
tails decrease pointwise to zero, the order continuity of the norm implies that
the right-hand side of \eqref{prex1re2} tends to zero as $m\to\infty$. This shows  that the series in~\eqref{Qr} converges in $X$, and  $\mathcal{Q}_\s^\vp$ satisfies~\eqref{a1r}.

  Finally, if $\widehat{\vp}\in \mathcal{R}^*$ , then applying  Lemma~\ref{lemS}, we obtain, for all $g\in X'$,
  \begin{equation}\label{prex1e2}
  \begin{split}
        \|g*K_\s^\vp\|_{X'}\le C\s \|\widehat{\vp}^*(\s\cdot)\|_{L_1(\R)}\|g\|_{X'}\le C\|g\|_{X'},
  \end{split}
  \end{equation}
which implies~\eqref{zv1r}.
\end{proof}

\begin{proposition}\label{prex1r+}
  Let $g\in \mathcal{B}_{X}^{(2\pi-a)\s}$ and $K_\s^\vp \in \mathcal{B}_{X'}^{a\s}$ for some $a\in (0,2\pi)$. Then
  \begin{equation}\label{po}
    Q_\s^\vp g=g*K_\s^\vp.
  \end{equation}
\end{proposition}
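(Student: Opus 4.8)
The plan is to verify the identity pointwise and then observe that, in the setting of this section, both $\mathcal{Q}_\s^\vp g$ (by the norm, and in fact pointwise, convergence of its defining series, which is absolutely convergent for the kernels at hand) and $g*K_\s^\vp$ (defined pointwise as the $X$--$X'$ pairing $x\mapsto\int_\R g(t)K_\s^\vp(x-t)\,dt$, which is moreover continuous since it is band-limited) are everywhere-defined functions in $X$; hence equality at every point is what has to be shown. Fix $x\in\R$. Since $K^\vp=\widehat\vp$ and $\vp$ is even, one has $\widehat{K_\s^\vp}(\xi)=\tfrac1{2\pi}\vp(\xi/\s)$, so by Fourier inversion $g*K_\s^\vp(x)=\int_\R\widehat g(\xi)\,\vp(\xi/\s)\,e^{i\xi x}\,d\xi$, whereas $\mathcal{Q}_\s^\vp g(x)=\sum_{k\in\Z}g(k/\s)\,\widehat\vp(\s x-k)$. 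Thus it suffices to show that the sampling series reproduces the same frequency integral.

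First I would treat the model case $g\in\mathcal{S}\cap\mathcal{B}_X^{(2\pi-a)\s}$, where $\widehat g$ is a smooth compactly supported function and every series in sight converges absolutely. Writing $g(k/\s)=\int_\R\widehat g(\xi)e^{i\xi k/\s}\,d\xi$ and interchanging the sum and the integral, one gets $\mathcal{Q}_\s^\vp g(x)=\int_\R\widehat g(\xi)\,h_x(\xi)\,d\xi$ with $h_x(\xi):=\sum_{k\in\Z}e^{i\xi k/\s}\widehat\vp(\s x-k)$, and $h_x$ is evaluated by Poisson summation in $k$: after the substitution $z=\s x-y$ one obtains
$$
h_x(\xi)=e^{i\xi x}\sum_{n\in\Z}e^{-2\pi i n\s x}\,\vp(\xi/\s-2\pi n).
$$
Now the support hypotheses do all the work: on $\supp\widehat g$ one has $|\xi/\s|\le 2\pi-a$, and since $\supp\vp\subset[-a,a]$ with $a<2\pi$, the term $\vp(\xi/\s-2\pi n)$ vanishes on $\supp\widehat g$ for every $n$ with $|n|\ge 2$ and is supported at the single endpoint $\xi=\pm(2\pi-a)\s$ for $n=\pm1$; as $\widehat g$ is continuous, those two points contribute nothing to $\int\widehat g\,h_x$, leaving only the $n=0$ term and giving $\mathcal{Q}_\s^\vp g(x)=\int_\R\widehat g(\xi)\,\vp(\xi/\s)\,e^{i\xi x}\,d\xi=g*K_\s^\vp(x)$.

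To pass to an arbitrary $g\in\mathcal{B}_X^{(2\pi-a)\s}$ I would use a density argument: approximate $g$ in $X$ by band-limited Schwartz functions $g_j\in\mathcal{B}_X^{(2\pi-a)\s}$ (for instance $g_j=g*\Lambda_j$, where $\widehat{\Lambda_j}$ is a smooth profile equal to $1$ on the fixed compact set $[-(2\pi-a)\s,(2\pi-a)\s]$ and decaying at infinity), apply the case already established to each $g_j$, and pass to the limit using that $g\mapsto\mathcal{Q}_\s^\vp g$ is bounded from $X_\s$ to $X$ (Proposition~\ref{prex1r}, together with the Marcinkiewicz--Zygmund inequality of Lemma~\ref{lemmz0r}, which bounds $\|g-g_j\|_{X_\s}$ by $C\|g-g_j\|_X$ on band-limited functions) and that $g\mapsto g*K_\s^\vp$ is bounded on $X$. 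An alternative that avoids density is to pair both sides with an arbitrary $h\in X'$: by Fubini and H\"older on the $X$--$X'$ pairing, $\langle\mathcal{Q}_\s^\vp g,\overline h\rangle-\langle g*K_\s^\vp,\overline h\rangle$ becomes a scalar identity of the same Poisson type, now driven by the absolutely convergent series $\sum_k g(k/\s)\,(\overline h*K_\s^\vp)(k/\s)$, which collapses by exactly the same support argument.

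I expect the main obstacle to be making the Poisson-summation manipulation of the sampling series $\sum_k g(k/\s)\widehat\vp(\s x-k)$ legitimate: for $\vp$ merely of bounded variation one has $\widehat\vp(y)=\mathcal{O}(1/|y|)$, so in that generality the series converges only conditionally and the interchange of summation and integration cannot be carried out directly — this is precisely why the reduction either to band-limited Schwartz data or to the dual pairing is needed before Poisson summation is applied. A secondary delicate point is that the bandwidth budget is borderline: the product $g\cdot K_\s^\vp(x-\cdot)$ is band-limited exactly to $[-2\pi\s,2\pi\s]$, i.e. to the very edge of what sampling at spacing $1/\s$ can resolve, so one genuinely uses the strictness $a\in(0,2\pi)$ (which keeps each factor below the full band) together with the behaviour of $\vp$ at $\pm a$ — controlled, when $\vp$ is only bounded, through the decay of $K^\vp=\widehat\vp$ required for $K_\s^\vp\in X'$ — to discard the aliasing frequencies $\pm2\pi\s$.
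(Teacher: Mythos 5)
Your strategy (Poisson summation plus spectral bookkeeping, with the strictness $a<2\pi$ eliminating the aliasing frequencies) is the same mechanism the paper uses, and your ``alternative'' via pairing with $h\in X'$ is essentially the paper's proof in dual form. The paper argues more directly and pointwise: for fixed $x$ the function $t\mapsto g(t)K_\s^\vp(x-t)$ lies in $L_1(\R)$ by $X$--$X'$ duality, and its Fourier transform is continuous and supported in the sum of the two spectra, hence in $[-2\pi\s,2\pi\s]$ and vanishing at every nonzero point of the dual lattice $2\pi\s\Z$; the Poisson summation formula over $\s^{-1}\Z$ therefore collapses to the $n=0$ term, which is precisely $\mathcal{Q}_\s^\vp g(x)=(g*K_\s^\vp)(x)$, the absolute convergence of the samples being a Plancherel--P\'olya type fact for band-limited $L_1$ functions. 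This route needs no density argument and no hypotheses beyond those in the statement.

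Your primary route has two genuine gaps. First, the Fubini/Poisson manipulation producing $h_x(\xi)=\sum_k e^{i\xi k/\s}\widehat\vp(\s x-k)$ is not justified in the stated generality: in this section $\vp$ is only assumed bounded with compact support (bounded variation was assumed only in the periodic case), so the samples of $\widehat\vp$ need not be absolutely summable and $h_x$ is at best a conditionally convergent or distributional object; reducing to Schwartz data $g$ does not cure this, because the obstruction sits in $\widehat\vp$, not in $g$ -- you flag the difficulty, but your proposed remedy does not remove it. Second, the density step is defective as written: if $\widehat{\Lambda_j}\equiv 1$ on $[-(2\pi-a)\s,(2\pi-a)\s]\supset\supp\widehat g$, then $g*\Lambda_j$ is a constant multiple of $g$ itself, not a Schwartz approximant; and any genuine approximation-and-limit scheme would invoke the boundedness of $\mathcal{Q}_\s^\vp$ from $X_\s$ to $X$, i.e.\ the hypotheses of Proposition~\ref{prex1r} (such as $\widehat\vp\in\mathcal{R}^*$), which Proposition~\ref{prex1r+} does not assume. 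Your dual-pairing variant avoids both problems and, once the term-by-term pairing is justified by the same Plancherel--P\'olya/Marcinkiewicz--Zygmund control, it is the paper's argument; it is simpler still to skip the dualization and apply Poisson summation to the $L_1$ product $t\mapsto g(t)K_\s^\vp(x-t)$ directly.
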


\begin{proof}
  Since $gK_\s^\vp(\cdot-t)\in L_1(\R)$ for all $t\in \R$, and since $\supp \mathcal{F}(gK_\s^\vp)\subset [-\pi\s,\pi\s]$, see, e.g.,~\cite[Theorem~6.37]{R73}, the Poisson summation formula yields~\eqref{po}.
\end{proof}

We now provide an analogue of Proposition~\ref{prex2}.
Recall that $\eta$ denotes a function in $C^\infty(\R)\cap \mathcal{R}$ such that $\eta(\xi)=1$ for $|\xi|\le 1$ and $\eta(\xi)=0$ for $|\xi|\ge 2$.

\begin{proposition}\label{prex2r}
   Let $X(\R)$ be a Banach lattice satisfying Assumption A, and let $\supp \vp\subset [-2\pi+1,2\pi-1]$.
\begin{itemize}
  \item[(i)] Let
   $$
   w_1(\xi)=\frac{1-\vp(\xi)}{\xi^s}\eta(\xi).
   $$
   If $\widehat{w_1}\in \mathcal{R}^*$, then $\mathcal{Q}_\s^\vp$ satisfies~\eqref{a3r}.
  \item[(ii)] Let
   $$
   w_2(\xi)=\frac{\xi^s \eta(\xi)}{1-\vp(\xi)}.
   $$
   If $\widehat{w_2}\in \mathcal{R}^*$, then $\mathcal{Q}_\s^\vp$ satisfies~\eqref{a4r}.
\end{itemize}
Here $w_1$ and $w_2$ are understood to be continuously extended at $\xi=0$.
\end{proposition}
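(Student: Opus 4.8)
The plan is to follow the proof of Proposition~\ref{prex2}, with the exact identity of Proposition~\ref{prex1r+} playing the role of the quadrature formula used there. Fix $g\in\mathcal{B}_X^\s$. Since $\vp$ is even, $\widehat{K_\s^\vp}(\xi)=\tfrac1{2\pi}\vp(\xi/\s)$, and because $\supp\vp\subset[-2\pi+1,2\pi-1]$ we get $K_\s^\vp\in\mathcal{B}_{X'}^{(2\pi-1)\s}$ (it lies in $X'$ by the argument preceding Proposition~\ref{prex1r} applied to the associate lattice $X'$, which also satisfies Assumption~A). Applying Proposition~\ref{prex1r+} with $a=2\pi-1\in(0,2\pi)$, so that $(2\pi-a)\s=\s$, yields $\mathcal{Q}_\s^\vp g=g*K_\s^\vp$ for every $g\in\mathcal{B}_X^\s$; hence $\widehat{g-\mathcal{Q}_\s^\vp g}(\xi)=\bigl(1-\vp(\xi/\s)\bigr)\widehat g(\xi)$. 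I would also record that $g$, $g^{(s)}$, and $g-\mathcal{Q}_\s^\vp g$ all belong to $X_\s$ (by Lemma~\ref{lemmz0r} and Lemma~\ref{lemm3}(i), noting $\mathcal{Q}_\s^\vp g\in X$), so that every quantity below is finite.

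For part~(i), since $\supp\widehat g\subset[-\s,\s]$ one has $\eta(\xi/\s)=1$ on the spectrum of $g$, and there
\[
\widehat{g-\mathcal{Q}_\s^\vp g}(\xi)=\bigl(1-\vp(\xi/\s)\bigr)\eta(\xi/\s)\,\widehat g(\xi)=(\xi/\s)^s\,w_1(\xi/\s)\,\widehat g(\xi)=(i\s)^{-s}\,w_1(\xi/\s)\,\widehat{g^{(s)}}(\xi).
\]
Recognising $w_1(\xi/\s)$ as the Fourier multiplier of convolution with $\psi_\s(x):=\s\,\widehat{w_1}(-\s x)$, this is the identity $g-\mathcal{Q}_\s^\vp g=(i\s)^{-s}\,g^{(s)}*\psi_\s$ (alternatively one invokes the principle of comparison for Fourier multipliers, see \cite[Ch.~7]{TB}, \cite[Sec.~2.4]{V23}, exactly as in Proposition~\ref{prex2}). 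The function $\psi_\s$ admits the symmetrically decreasing majorant $\s\,\widehat{w_1}^*(\s\,\cdot)$, whose $L_1(\R)$-norm equals $\|\widehat{w_1}^*\|_{L_1(\R)}$ and is therefore independent of $\s$; thus $\psi_\s\in\mathcal{R}^*$ uniformly in $\s$, and Lemma~\ref{lemS} gives $\|g-\mathcal{Q}_\s^\vp g\|_X\le C\,\s^{-s}\,\|\widehat{w_1}^*\|_{L_1(\R)}\,\|g^{(s)}\|_X$, which is \eqref{a3r}. Part~(ii) is obtained symmetrically: on the spectrum of $g$ one writes $(i\xi/\s)^s=\bigl(1-\vp(\xi/\s)\bigr)\,i^{s}w_2(\xi/\s)$, and since $g-\mathcal{Q}_\s^\vp g\in\mathcal{B}_X^\s$ this reads $\s^{-s}g^{(s)}=i^{s}\,(g-\mathcal{Q}_\s^\vp g)*\widetilde\psi_\s$ with $\widetilde\psi_\s(x):=\s\,\widehat{w_2}(-\s x)$; Lemma~\ref{lemS} then yields $\s^{-s}\|g^{(s)}\|_X\le C\,\|\widehat{w_2}^*\|_{L_1(\R)}\,\|g-\mathcal{Q}_\s^\vp g\|_X$, i.e., \eqref{a4r}.

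The Fourier-transform bookkeeping and the scaling identity $\|\s\,\widehat{w_i}^*(\s\,\cdot)\|_{L_1(\R)}=\|\widehat{w_i}^*\|_{L_1(\R)}$ are routine. The step requiring care is the passage from the pointwise symbol identities to genuine identities between elements of $X$: since $\vp$ (hence $w_1,w_2$) need not be smooth, one cannot multiply the band-limited distribution $\widehat g$ by $w_i(\cdot/\s)$ naively, and this is precisely why the hypotheses are stated as $\widehat{w_i}\in\mathcal{R}^*$ — they realise $w_i(\cdot/\s)$ as the Fourier multiplier of convolution with an $\mathcal{R}^*$-kernel, to which the comparison principle and Lemma~\ref{lemS} apply. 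The only other point to check is the alignment of the spectral constants in Proposition~\ref{prex1r+}, which forces $a=2\pi-1$ and thereby explains the hypothesis $\supp\vp\subset[-2\pi+1,2\pi-1]$ in the statement.
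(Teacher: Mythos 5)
Your proposal is correct and follows essentially the same route as the paper: use Proposition~\ref{prex1r+} to identify $\mathcal{Q}_\s^\vp g$ with $g*K_\s^\vp$ on $\mathcal{B}_X^\s$, rewrite the error through the multiplier $w_1(\s^{-1}\xi)$ (resp.\ $w_2$), and conclude via the comparison principle together with Lemma~\ref{lemS}, the $\mathcal{R}^*$ hypothesis giving a bound uniform in $\s$ by scaling. You merely make explicit some details the paper leaves implicit (the choice $a=2\pi-1$, the dilation bookkeeping, and the symmetric argument for part~(ii), which the paper omits).
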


\begin{proof}
  We prove only the first statement. Applying Proposition~\ref{prex1r+} for $g\in\mathcal{B}_X^\s$, we obtain
   \begin{equation*}
     g-\mathcal{Q}_\s^\vp g = g-g*K_\s^\vp = \mathcal{F}^{-1}\((\s^{-1}\xi)^s w_1(\s^{-1}\xi)\widehat{g}(\xi)\)
   \end{equation*}
   Using this identity together with the principle of comparison for Fourier multipliers (see, e.g.,~\cite[Ch.~7]{TB} or \cite[Sec.~2.4]{V23}), we conclude that  inequality~\eqref{a3r} holds for $\mathcal{Q}_\s^\vp$  if and only if there exists a constant $C>0$ such that
  \begin{equation}\label{ws}
    \sup_{\s\ge 1}\|g*\mathcal{W}_\s\|_X\le C\|g\|_X,\quad g\in\mathcal{B}_X^\s,
  \end{equation}
  where
  $$
  \mathcal{W}_\s(x) = \mathcal{F}^{-1} \(w_1\(\s^{-1} \xi\)\)(x).
  $$
  Applying Lemma~\ref{lemS}  by the same way as in~\eqref{prex1e2}, we prove~\eqref{ws}.
\end{proof}

\begin{example}\label{ex1r}
  Consider the Whittaker--Kotelnikov--Shannon sampling expansion defined by
$$
\mathcal{S}_\s f(x):=\sum_{k\in\Z}f\(\s^{-1}k\){\rm sinc}(\s x-k),
$$
where
$$
{\rm sinc}(x):=\frac{\sin \pi x}{\pi x}.
$$

Note that $\mathcal{S}_\s=\mathcal{Q}_\s^\vp$ with $\vp=\chi_{[-\pi\s,\pi\s]}$.

Denote
$$
\mathcal{I}_\sigma f(x) := \mathcal F^{-1}\big(\chi_{[-\pi\sigma,\pi\sigma]}(\xi)\,\widehat f(\xi)\big)(x),\quad f\in \mathcal{S}.
$$

\begin{corollary}\label{cor2rs}
  Let $X(\R)$ be a Banach lattice satisfying Assumption~A with ${\rm sinc}\in X$ and $\sup_{\sigma\ge1}\|\mathcal{I}_\sigma\|_{X\to X}<\infty$, and let $r, s \in \N$ with $2r\ge s$. Then the following properties are equivalent for all $f\in \bigcap_{\s\ge 1}X_\s(\R)$ and $\a \in (0,s)$:
  \begin{itemize}
    \item[(i)]  $\|f-\mathcal{S}_\s f\|_{X}=\mathcal{O}(\s^{-\a})$,
    \item[(ii)] $\|(I-\dot A_{\g/\s})^s f\|_{X}+\|(I-A_{\g/\s})^r f\|_{X_\s}=\mathcal{O}(\s^{-\a})$.
  \end{itemize}
\end{corollary}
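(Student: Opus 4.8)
The plan is to recognize $\mathcal{S}_\sigma$ as a quasi-interpolation operator of the type $\mathcal{Q}_\sigma^\varphi$ studied in this section and then to apply Corollary~\ref{cor2r}. As recorded in Example~\ref{ex1r}, $\mathcal{S}_\sigma=\mathcal{Q}_\sigma^\varphi$ with $K^\varphi=\widehat\varphi={\rm sinc}$ (i.e.\ $\varphi=\chi_{[-\pi,\pi]}$), so that $K_\sigma^\varphi=\sigma\,{\rm sinc}(\sigma\,\cdot)$, whose Fourier transform is a constant multiple of $\chi_{[-\pi\sigma,\pi\sigma]}$; in particular $g*K_\sigma^\varphi=\mathcal{I}_\sigma g$ for every admissible $g$. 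Since Corollary~\ref{cor2r} rests on the hypotheses of Theorem~\ref{thinvr}(i), it suffices to verify that $\mathcal{S}_\sigma$ satisfies assumptions~\eqref{a1r}, \eqref{a2r}, and~\eqref{a3r} with the given $r,s$ (Assumption~A and $2r\ge s$ being already assumed), and then to invoke Corollary~\ref{cor2r} with $\mathcal{G}_\sigma=\mathcal{S}_\sigma$.

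First I would verify~\eqref{a1r}. By Proposition~\ref{prex1r}, both~\eqref{a1r} and the $X$-convergence of the series defining $\mathcal{S}_\sigma$ follow once~\eqref{zv1r} is established, i.e.\ $\sup_{\sigma\ge1,\,\|g\|_{X'}\le1}\|g*K_\sigma^\varphi\|_{X'}<\infty$. Using the identity $g*K_\sigma^\varphi=\mathcal{I}_\sigma g$, this is exactly the uniform bound $\sup_{\sigma\ge1}\|\mathcal{I}_\sigma\|_{X'\to X'}<\infty$. Since the Fourier multiplier $\mathcal{I}_\sigma$ has the real, even symbol $\chi_{[-\pi\sigma,\pi\sigma]}$, it is formally self-adjoint for the duality pairing between $X$ and $X'$; a standard argument (using that $X$ has the Fatou property, whence $X''=X$, together with the density of $\mathcal{S}$) then gives $\|\mathcal{I}_\sigma\|_{X'\to X'}=\|\mathcal{I}_\sigma\|_{X\to X}$, and the latter is uniformly bounded by hypothesis. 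This settles~\eqref{a1r}, and in particular $\mathcal{S}_\sigma f\in\mathcal{B}_X^{\pi\sigma}$ for $f\in X_\sigma$, being a sum, convergent in $X$, of translates of $K_\sigma^\varphi\in\mathcal{B}_X^{\pi\sigma}$.

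Next I would derive~\eqref{a3r} from Proposition~\ref{prex2r}(i): $\varphi=\chi_{[-\pi,\pi]}$ is supported in $[-2\pi+1,2\pi-1]$, and $w_1(\xi)=\xi^{-s}\bigl(1-\varphi(\xi)\bigr)\eta(\xi)$ vanishes identically because $\varphi\equiv1$ on $[-2,2]\supseteq\supp\eta$; hence $\widehat{w_1}=0\in\mathcal{R}^*$ and in fact $\mathcal{S}_\sigma g=g$ for all $g\in\mathcal{B}_X^\sigma$, so~\eqref{a3r} holds with any constant $K_3$. For~\eqref{a2r}, the identity ${\rm sinc}(j-k)=\delta_{jk}$ shows that $\mathcal{S}_\sigma$ interpolates $f$ at the nodes $x_k=\sigma^{-1}k$; applying the Marcinkiewicz--Zygmund inequality of Lemma~\ref{lemmz0r} to $\mathcal{S}_\sigma f\in\mathcal{B}_X^{\pi\sigma}$ with this node set (spacing $\sigma^{-1}=\pi(\pi\sigma)^{-1}$, which is admissible) yields
\[
\|f\|_{X_\sigma}=\Big\|\sum_{k\in\Z}|\mathcal{S}_\sigma f(x_k)|\chi_{[x_k,x_{k+1})}\Big\|_X\le C\,\|\mathcal{S}_\sigma f\|_X,
\]
which is~\eqref{a2r}. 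With~\eqref{a1r}, \eqref{a2r}, and~\eqref{a3r} verified, Corollary~\ref{cor2r} gives the asserted equivalence for $f\in\bigcap_{\sigma\ge1}X_\sigma(\R)$ and $\alpha\in(0,s)$.

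The main difficulty is~\eqref{a1r}: the convenient sufficient condition $\widehat\varphi\in\mathcal{R}^*$ from Proposition~\ref{prex1r} fails here, since $\widehat\varphi={\rm sinc}$ admits no summable symmetrically decreasing majorant, so one must work directly with the band-limiting projections $\mathcal{I}_\sigma$, identify the convolutions $g*K_\sigma^\varphi$ with $\mathcal{I}_\sigma g$ (notwithstanding ${\rm sinc}\notin L_1(\R)$), and transfer uniform boundedness from $X$ to the associate space $X'$ by duality. This is precisely where the two hypotheses of the statement enter: ${\rm sinc}\in X$ makes $\mathcal{S}_\sigma$ a well-defined operator into $X$, and $\sup_\sigma\|\mathcal{I}_\sigma\|_{X\to X}<\infty$ powers~\eqref{a1r} through this duality.
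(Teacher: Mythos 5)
Your proposal is correct, and it reaches the conclusion by a slightly different final step than the paper. The verification of~\eqref{a1r} is essentially the paper's own argument: pass to the associate space by duality to get $\sup_{\s\ge1}\|\mathcal{I}_\s\|_{X'\to X'}<\infty$, identify $g*K_\s^\vp=\mathcal{I}_\s g$ for $g\in X'$, and invoke Proposition~\ref{prex1r} to obtain both the $X$-convergence of the series and~\eqref{a1r}; likewise your verification of~\eqref{a3r} via Proposition~\ref{prex2r}(i) with $w_1\equiv 0$ (since $\vp\equiv1$ on $\supp\eta$) matches the paper. The divergence is at the end: the paper does \emph{not} verify~\eqref{a2r}; it applies Corollary~\ref{cor2r+} (which needs only \eqref{a1r}, \eqref{a3r} and $\mathcal{G}_\s:X_\s\to\mathcal{B}_X^{\s}$) and then uses the interpolation identity $\mathcal{S}_\s f(\s^{-1}k)=f(\s^{-1}k)$ to conclude that $\|f-\mathcal{S}_\s f\|_{X_\s}=0$, so the combined error in Corollary~\ref{cor2r+} collapses to $\|f-\mathcal{S}_\s f\|_X$. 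You instead use the same interpolation identity together with the Marcinkiewicz--Zygmund inequality of Lemma~\ref{lemmz0r} (applied to $\mathcal{S}_\s f\in\mathcal{B}_X^{\pi\s}$ with $\g=\pi$) to establish the inverse stability assumption~\eqref{a2r}, and then apply Corollary~\ref{cor2r}. This is exactly the ``alternative way'' the paper itself points out for the periodic analogue (Corollary~\ref{cor2+} via Corollary~\ref{cor2} and Lemma~\ref{lemmz}), so it is a legitimate route; what it buys is that you never need the discrete error term $\|f-\mathcal{S}_\s f\|_{X_\s}$ or Corollary~\ref{cor2r+}, at the price of checking~\eqref{a2r}, which here is immediate from interpolation plus the MZ inequality. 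One cosmetic point: the identification should read $\vp=\chi_{[-\pi,\pi]}$ (as you state), not $\chi_{[-\pi\s,\pi\s]}$; and note that, like the paper, your argument tacitly uses that the $X$-limit $\mathcal{S}_\s f$ is genuinely band-limited with the correct nodal values, a point both proofs treat as understood.
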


\begin{proof}
By duality, we have $\sup_{\sigma\ge1}\|\mathcal{I}_\sigma\|_{X'\to X'}<\infty$. Moreover, $\mathcal{I}_\s f=f*{\rm sinc}_\s$ for all $f\in X'$. Together with Proposition~\ref{prex1r}, this implies that for every $f\in X_\s$ the series $\mathcal{S}_\s f$ converges in $X$, and  $\mathcal{S}_\s$ satisfies assumption~\eqref{a1r}.
Applying Proposition~\ref{prex2r}(i), we also obtain that $\mathcal{S}_\s$ satisfies assumption~\eqref{a3r} for all $s\in \N$.

Finally, applying Corollary~\ref{cor2r+} and using the identity  $\mathcal{S}_\s f(\s^{-1}k) = f(\s^{-1}k)$ for all $k\in \Z$, we complete the proof.
\end{proof}
\end{example}

\begin{example}\label{ex2r}
We now consider the Bochner-Riesz sampling operator $\mathcal{Q}_\s^\rho$, where $\rho(\xi)=(1-\xi^2)_+^\a$, $\a>0$, and
the Fej\'er operator $\mathcal{Q}_\s^\phi$, where $\phi(\xi)=(1-|\xi|)_+$.
By the same arguments as in Example~\ref{ex2}, and using Propositions~\ref{prex1r} and~\ref{prex2r} together with inequalities~\eqref{phirho}, we obtain the following strong converse inequalities for these operators.
\begin{theorem}\label{brr}
  Let $X(\R)$ be a Banach lattice satisfying Assumption A. Then, for all $f\in X_\s$ and $r\in \N$, we have
  \begin{equation*}
    \|f-\mathcal{Q}_\s^\rho f\|_{X}+\|f-\mathcal{Q}_\s^\rho f\|_{X_\s}\asymp \|(I-\dot A_{\g/\s})^2 f\|_{X}+\|(I-A_{\g/\s})^r f\|_{X_\s}.
  \end{equation*}
If, additionally, the Hilbert transform is bounded on $X$, then
\begin{equation*}
    \|f-\mathcal{Q}_\s^\phi f\|_{X}+\|f-\mathcal{Q}_\s^\phi f\|_{X_\s}\asymp \|(I-\dot A_{\g/\s}) f\|_{X}+\|(I-A_{\g/\s})^r f\|_{X_\s}.
  \end{equation*}
Here $\asymp$ denotes a two-sided inequality with positive constants independent of $f$ and $\s$.
\end{theorem}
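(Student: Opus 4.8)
The plan is to follow the route already used in the periodic setting (Example~\ref{ex2} and Theorem~\ref{br}), now with the non-periodic machinery: Propositions~\ref{prex1r} and~\ref{prex2r} to verify the structural hypotheses \eqref{a1r}, \eqref{a3r}, \eqref{a4r} on the sampling operators, and Theorem~\ref{thCr} to convert these into the claimed two-sided estimate. First I would note that, since $\supp\rho\subset[-1,1]$ and $\supp\phi\subset[-1,1]$, the kernels $K_\s^\rho$ and $K_\s^\phi$ are band-limited to $[-\s,\s]$, so that $\mathcal{Q}_\s^\rho\colon X_\s\to\mathcal{B}_X^{\s}$ and $\mathcal{Q}_\s^\phi\colon X_\s\to\mathcal{B}_X^{\s}$; thus Theorem~\ref{thCr} becomes applicable once the three assumptions are checked. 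Convergence of the defining series in $X$, together with assumption \eqref{a1r}, will follow from Proposition~\ref{prex1r}, because the decay bounds \eqref{phirho} give $\widehat{\rho}\in\mathcal{R}^*$ and $\widehat{\phi}\in\mathcal{R}^*$ (here the order continuity of the norm of $X$ enters, exactly as in the proof of Proposition~\ref{prex1r}).

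For the Bochner--Riesz operator $\mathcal{Q}_\s^\rho$ I would take $s=2$ and, following Proposition~\ref{prex2r}, reduce \eqref{a3r} and \eqref{a4r} to showing that $w_1(\xi)=\xi^{-2}(1-\rho(\xi))\eta(\xi)$ and $w_2(\xi)=\xi^{2}\eta(\xi)(1-\rho(\xi))^{-1}$ (continuously extended at $0$) have Fourier transforms in $\mathcal{R}^*$. For $w_1$ I would use the decomposition $w_1=g_1+g_2-\rho g_2$ with $g_1,g_2\in C^\infty(\R)$ compactly supported, exactly as in Example~\ref{ex2}: Remark~\ref{reml1} gives $\widehat{g_1},\widehat{g_2}\in\mathcal{R}^*$, the first bound in \eqref{phirho} gives $\widehat{\rho}\in\mathcal{R}^*$, and $|\widehat{\rho g_2}|=|\widehat{\rho}*\widehat{g_2}|\le\widehat{\rho}^{\,*}*|\widehat{g_2}|\in\mathcal{R}$ yields $\widehat{\rho g_2}\in\mathcal{R}^*$, whence $\widehat{w_1}\in\mathcal{R}^*$. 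For $w_2$ the same scheme applies after isolating the behaviour near $\xi=0$, where $1-\rho(\xi)$ vanishes to order $2$; here the technical estimates of~\cite[Proof of Theorem~13]{RS08} are needed. Then Theorem~\ref{thCr} with $s=2$ and any $r\in\N$ (so $2r\ge s$) delivers the first equivalence.

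For the Fej\'er operator $\mathcal{Q}_\s^\phi$ the same multiplier analysis with exponent $1$ in place of $2$ — i.e., with $\widetilde{w_1}(\xi)=|\xi|^{-1}(1-\phi(\xi))\eta(\xi)$ and $\widetilde{w_2}(\xi)=|\xi|\eta(\xi)(1-\phi(\xi))^{-1}$, whose Fourier transforms lie in $\mathcal{R}^*$ by the same decomposition together with the second bound in \eqref{phirho} — gives $\|g-\mathcal{Q}_\s^\phi g\|_X\asymp\s^{-1}\|\Delta^{1/2}g\|_X$ for $g\in\mathcal{B}_X^{\s}$, where $\Delta^{1/2}$ is the Fourier multiplier with symbol $|\xi|$. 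Assuming the Hilbert transform is bounded on $X$, one has $\|\Delta^{1/2}g\|_X\asymp\|g'\|_X$ on $\mathcal{B}_X^{\s}$, so $\mathcal{Q}_\s^\phi$ satisfies \eqref{a3r} and \eqref{a4r} with $s=1$, and Theorem~\ref{thCr} with $s=1$ yields the second equivalence. The main obstacle in this programme is the verification that $\widehat{w_2}$ and $\widehat{\widetilde{w_2}}$ belong to $\mathcal{R}^*$: the reciprocal of $1-\rho$ (resp.\ $1-\phi$) must be controlled near the zero of the denominator after cutting off with $\eta$, which is exactly the delicate point handled via~\cite{RS08}; the transfer from $\Delta^{1/2}$ to $d/dx$ is the reason the Hilbert transform hypothesis appears in the Fej\'er case, and one should also keep in mind that all of this is carried out on $X(\R)$, so the norm convergence of the series defining the operators has to be secured at the outset, which is why order continuity of the norm is assumed.
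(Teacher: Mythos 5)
Your proposal is correct and follows essentially the same route as the paper: verify \eqref{a1r} via Proposition~\ref{prex1r} and the decay bounds \eqref{phirho}, verify \eqref{a3r} and \eqref{a4r} via Proposition~\ref{prex2r} using the same multiplier decomposition (and the reference to~\cite{RS08} for $w_2$, with the Hilbert-transform transfer from $\Delta^{1/2}$ to $d/dx$ in the Fej\'er case) as in Example~\ref{ex2}, and then apply Theorem~\ref{thCr}. This is exactly the argument the paper invokes, only written out in more detail.
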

\end{example}

\end{document}